\documentclass[oneside,english,12pt]{amsart}
 \usepackage[T1]{fontenc}
 \usepackage[latin9]{inputenc}
 \usepackage{amsmath, amstext, amsthm, amsfonts, amssymb}
 \usepackage[all]{xy}
 \usepackage{graphicx}
 \usepackage{esint}
 \usepackage{units}
\usepackage[mathscr]{eucal}
 \usepackage{hyperref}
 \usepackage{amscd}
\usepackage{amsthm}
\usepackage{lmodern}
\usepackage{helvet}

\usepackage{setspace}
\onehalfspacing
\numberwithin{equation}{section}
\usepackage{geometry}
\geometry{verbose,tmargin=1.5in,bmargin=1.5in,lmargin=1.3in,rmargin=1.3in}

\usepackage[all]{xy}
\xyoption{curve}
\usepackage{babel}
\providecommand{\tabularnewline}{\\}

\makeatletter

\def\k{\mathrm{K}}
\def\x{\mathrm{X}}
\def\Zr{\mathrm{Z}}
\def\xab{\x_{a,b}}
\def\kc{\check{\k}}
\def\kab{\k_{\alpha,\beta}}
\def\kabc{\kc_{\alpha,\beta}}
\def\E{\mathcal{E}}
\def\Ec{\check{\E}}

\def\a{\alpha}
\def\b{\beta}

\def\Dx{\Delta(\xi)}
\def\c{\mathcal{C}}
\def\z{\mathfrak{z}}
\def\zt{\tilde{\z}}
\def\Zab{\mathrm{Z}_{\a,\b}}
 
 \newcommand {\C} {{\mathbb C}}
 \newcommand {\R} {{\mathbb R}}
 \newcommand {\Z} {{\mathbb Z}}
 \newcommand {\Q} {{\mathbb Q}}

 \newcommand {\PP} {{\mathbb P}}
 
 \newcommand {\HH} {{\mathbb H}}

 \newcommand {\W} {{\mathcal W}}
 \newcommand {\D} {{\mathcal D}}
 
 \newcommand {\Ss} {{\mathcal S}}
 \newcommand {\Y} {{\mathcal Y}}

 \newcommand {\LL} {{\mathcal L}}

  \newcommand {\del} {{\partial}}
 \newcommand {\X} {{\mathscr X}}

 \newcommand {\CO}{\mathcal{O}}

 \newcommand {\cl}{\text{\rm cl}}

\newcommand{\alg}{\text{\rm alg}}

\newcommand{\Ch}{\text{\rm CH}}

\newcommand{\ol}{\overline}
\newcommand{\ul}{\underline}

\newcommand{\Ext}{\text{\rm Ext}}
\newcommand{\MHS}{\text{\rm MHS}}

\newcommand{\w}{\omega}
\newcommand{\tensor}{\otimes}
\newcommand{\isom}{\simeq}

\newtheorem{thm}{Theorem}[section]
\newtheorem{cor}{Corollary}[section]
\newtheorem{lemma}{Lemma}[section]
\newtheorem{prop}{Proposition}[section]

\newtheorem{rem}{Remark}[section]

\newtheorem{q}{Question}[section]

\newtheorem{assum}{Assumption}[section]
\newenvironment{lyxlist}[1]
{\begin{list}{}
{\settowidth{\labelwidth}{#1}
 \setlength{\leftmargin}{\labelwidth}
 \addtolength{\leftmargin}{\labelsep}
 }}
{\end{list}}

\newcommand{\thmref}[1]{Theorem \ref{#1}}
\newcommand{\propref}[1]{Proposition \ref{#1}}

\newcommand{\coref}[1]{Corollary \ref{#1}}

\DeclareMathOperator{\CH}{CH}
\DeclareMathOperator{\Span}{Span}

\makeatother

\begin{document}

\title[Normal functions]{Normal functions, Picard-Fuchs equations, and elliptic fibrations on K3 surfaces}

\author{Xi Chen${}^{\dagger}$}
\address{632 Central Academic Building\\ University of Alberta\\ Edmonton, Alberta T6G 2G1, CANADA}
\email{xichen@math.ualberta.ca}

\author{Charles Doran${}^{\dagger}$}
\address{632 Central Academic Building\\ University of Alberta\\ Edmonton, Alberta T6G 2G1, CANADA}
\email{doran@math.ualberta.ca}

\author{Matt Kerr${}^{\dagger\dagger}$}
\address{Department of Mathematics, Campus Box 1146\\ Washington University in St. Louis\\ St. Louis, MO 63130, USA}
\email{matkerr@math.wustl.edu}

\author{James D. Lewis${}^{\dagger}$}
\address{632 Central Academic Building\\ University of Alberta\\ Edmonton, Alberta T6G 2G1, CANADA}
\email{lewisjd@ualberta.ca}

\date{\today}

\begin{abstract}
Using Gauss-Manin derivatives of generalized normal functions,
we arrive at some remarkable results on the non-triviality of the 
 transcendental regulator for $K_m$ of a very general  projective algebraic manifold.
Our strongest results are for the transcendental regulator
for $K_1$ of a very general $K3$ surface.  We also construct an explicit 
family of $K_1$ cycles on $H \oplus E_8 \oplus E_8$-polarized $K3$ surfaces,
and show they are indecomposable by a direct evaluation of the real regulator.  
Critical use is made of natural elliptic fibrations, hypersurface normal forms, 
and an explicit parametrization by modular functions.
\end{abstract}


\keywords{Regulator, Chow group, Deligne cohomology, Picard-Fuchs equation, normal function, K3 surface, elliptic fibration, normal form, lattice polarization}

\subjclass{Primary 14C25; Secondary 14C30, 14C35}

\maketitle

\tableofcontents{}

\section{Introduction}\label{SEC01} 
The subject of this paper is the existence, construction, and detection of indecomposable
algebraic $K_1$-cycle classes on $K3$ surfaces and their self-products.  We begin by treating the
existence of regulator indecomposables on a very general $K3$ with fixed polarization by a lattice
of rank less than $20$ (\S 2), as well as on their self-products in the rank one projective case (\S 4).
This is intertwined with a discussion (\S 3) of homogeneous and inhomogeneous Picard-Fuchs equations
for truncated normal functions --- a subject of increasing interest due to their recent spectacular use
in open string mirror symmetry \cite{MW} --- which is further amplified by explicit examples in \S 5.

The second half of the paper takes up the question of how to use the geometry of polarized $K3$ surfaces
with high Picard rank to construct indecomposable cycles (\S \S 5-6).  Elliptic fibrations yield an extremely natural
source of families of cycles, whose image under the real and transcendental regulator maps have apparently
not been previously studied.  Our computation of their real regulator not only proves indecomposability, but turns out
to be related to higher Green's functions on the modular curve $X(2)$ (cf. \cite{Ke}).  The paper concludes (\S 7) with a discussion of the mysterious Picard rank 20 case and its relationship to open irrationality problems.  In the remainder of this introduction, we shall state the main existence results of \S\S 2-4, and place the constructions of \S6 in historical context.

Let $X$ be a projective algebraic manifold of dimension $d$,
and $\Ch^r(X,m)$ the higher Chow group introduced by Bloch (\cite{B}). We are mainly interested
in working modulo torsion, thus we will restrict ourselves to the corresponding group $\Ch^r(X,m;\Q) := \Ch^r(X,m)\tensor \Q$. An explicit description of the Bloch cycle class map to Deligne cohomology,
\[
\cl_{r,m} :\Ch_{\hom}^r(X,m;\Q) \to J\big(H^{2r-m-1}(X,\Q(r))\big) \subset H_{\D}^{2r-m}(X,\Q(r)),
\]
is given in \cite{KLM}, where
\[
\begin{split}
&J\big(H^{2r-m-1}(X,\Q(r))\big) := \Ext_{\MHS}^1\big(\Q(0),H^{2r-m-1}(X,\Q(r))\big)\\
&\quad \quad \quad
\simeq \frac{F^{d-r+1}H^{2d-2r+m+1}(X,\C)^{\vee}}{H_{2d-2r+m+1}(X,\Q(d-r))}.
\end{split}
\]
 We will now assume that $X$ is a very general member of a  family
 $\lambda: \X \to \Ss$, where $\X$, $\Ss$ are smooth quasi-projective varieties
 and $\lambda$ is smooth and proper, and where $X := \lambda^{-1}(0)$ corresponds to
 $0\in \Ss$. Associated to this is the Kodaira-Spencer map
 $\kappa : T_0(\Ss) \to H^1(X,\Theta_X)$, whose image we will denote by
 $H_{\alg}^1(X,\Theta_X)$, where $\Theta_X$ is the sheaf of holomorphic vector fields
 on $X$.

 \bigskip
 The Hodge structure
 \[
 H^{2r-m-1}(X,\Q(r)) = H_f^{2r-m-1}(X,\Q(r)) \bigoplus
 H_v^{2r-m-1}(X,\Q(r))
 \]
  decomposes, where
 \[
 H_f^{2r-m-1}(X,\Q(r)) := H^{2r-m-1}(X,\Q(r))^{\pi_1(\Ss)}
 \]
 is the fixed part of the corresponding
 monodromy group action  on $H^{2r-m-1}(X,\Q(r))$, and $H_v^{2r-m-1}(X,\Q(r))$
 is the orthogonal complement.  
  
 \medskip
 One has a reduced cycle class map
 \[
\ul{\cl}_{r,m}: \Ch^r(X,m;\Q) \to J\big(H^{2r-m-1}_v(X,\Q(r))\big).
 \]
Such a regulator plays a key role in detecting
 interesting $\Ch^r(X,m)$ classes, such as indecomposables (see for example \cite{L}
 and \cite{MS}). We can further pass to the transcendental regulator
 \[
 \Phi_{r,m} : \Ch^r(X,m;\Q)\to \frac{F^{d-r+m+1}H_v^{2d-2r+m+1}(X,\C)^{\vee}}{H^v_{2d-2r+m+1}(X,\Q(d-r))},
 \]
for which the formula is given as follows. 
If $\xi \in \Ch^r(X,m;\Q)$, then viewing $|\xi|\subset X\times \C^m$ as a closed subset of
 codimension $r$, it follows that $\dim Pr_X(|\xi|) \leq d-r+m$. According the
 the formula in \cite{KLM} (also cf. \cite{K-L}), one can choose $\xi$ such that
 it meets the real cube $X\times [-\infty,0]^m$ properly, and that for $\w\in
 F^{d-r+m+1}H_v^{2d-2r+m+1}(X,\C)$,
 \[
 \Phi_{r,m}(\xi)(\w) = \pm \frac{1}{(2\pi{\rm i})^{d-r}}\int_{\del^{-1}\{\xi \cap X\times [-\infty,0]^m\}}\w.
 \]
For example in the case $(d,r,m) = (2,2,1)$, we have
\[
\Phi_{2,1} : \Ch^2(X,1)\to \frac{H_v^{2,0}(X)^{\vee}}{H^v_2(X,\Q)}.
 \]
 Our first result is the following.
 \begin{thm}\label{T27} Let $X/\C$ be a very general algebraic $K3$ surface. Then
the transcendental regulator $\Phi_{2,1}$
is non-trivial. Quite generally, if $X$ is a very general member of a general subfamily
 of dimension $20-\ell$, describing a family of $K3$ surfaces with general member of
 Picard rank $\ell$,
 with $\ell < 20$, then  $\Phi_{2,1}$ is non-trivial.
 \end{thm}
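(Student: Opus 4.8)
The plan is to trade the hard-to-evaluate regulator value $\Phi_{2,1}(\xi)$ for its Gauss--Manin derivative, which is a computable infinitesimal invariant insensitive to the monodromy-variable lattice $H^v_2(X,\Q)$. First I would realize $\xi$ as the central fibre of a relative higher Chow cycle $\Xi\in\Ch^2(\X/\Ss,1;\Q)$ over the family $\lambda:\X\to\Ss$; for a family of lattice-polarized K3 surfaces whose general member has Picard rank $\ell$, the $\ell$ independent curve classes furnish enough curves $C_i$ and functions $f_i$ (with $\sum_i\operatorname{div}f_i=0$) to build such a $\Xi$ that restricts to every fibre, the very general one included. Applying $\ul{\cl}_{2,1}$ fibrewise yields a normal function $\nu=\ul{\cl}_{2,1}(\Xi)$, a holomorphic section of the bundle of Jacobians $J\big(H^2_v(X_s,\Q(2))\big)$ over $\Ss$ whose value at $0$, projected to $H^{2,0}(X)^\vee/H^v_2(X,\Q)$, is exactly $\Phi_{2,1}(\xi)$.

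Second, I would pass to the infinitesimal invariant. Pairing $\nu$ against the holomorphic form $\w\in H^{2,0}(X)$ and differentiating in a Kodaira--Spencer direction $\kappa(v)$, $v\in T_0(\Ss)$, annihilates the flat lattice and produces $\delta\nu$, the class of $\nabla\nu$ in the cohomology of the Koszul complex built from $\nabla$ and the Hodge filtration. The quantity $\langle\nabla_{\kappa(v)}\nu,\w\rangle$ is expressible through the membrane period $\int_{\Gamma}\w$ and its first Gauss--Manin derivative, and by the theory of \S 3 it is governed by the inhomogeneous Picard--Fuchs equation $D\,V=g$ satisfied by the truncated normal function period $V(s)$, where $D$ is the Picard--Fuchs operator annihilating the periods of $\w$. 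The key reduction is logical rather than computational: if $g\not\equiv0$ then $\nu$ cannot be a flat (torsion or lattice) section, so $\nu$ is non-trivial at the very general point and hence $\Phi_{2,1}\neq0$ there.

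Third comes the non-vanishing $g\not\equiv0$. By definition $\kappa(T_0(\Ss))=H^1_{\alg}(X,\Theta_X)$, and the differential of the period map (local Torelli for K3) embeds this into $\operatorname{Hom}\big(H^{2,0}(X),H^{1,1}(X)\big)$, so the cup product feeding $\langle\nabla_{\kappa(v)}\nu,\w\rangle$ is non-degenerate on the variable part of $H^2$. Combined with an explicit evaluation of $g$ on a special fibre $X_0$ of high Picard rank --- where the $C_i$ and $f_i$ are written down, and where the real regulator is computed in \S\S 5--6 via higher Green's functions --- this should give $g(s_0)\neq0$ for a suitable $v$, whence $\delta\nu\neq0$. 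For the general rank-$\ell$ statement one runs the identical argument inside the variable sub-Hodge structure $H^2_v(X_s,\Q)$, of Hodge type $(1,20-\ell,1)$, cut out by the polarization, whose monodromy remains large enough for local Torelli and the period estimate to apply.

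The hardest part will be this non-vanishing of the inhomogeneous term $g$ --- equivalently, verifying that the first Gauss--Manin derivative of the membrane period is not itself a period of $\w$. This is exactly where the arithmetic of the chosen cycle and the Griffiths transversality computation interact, and it is the payoff of working with $\delta\nu$ rather than $\Phi_{2,1}(\xi)$ directly: a naive attempt to evaluate the regulator on the very general fibre founders on the indeterminacy modulo $H^v_2(X,\Q)$, precisely the ambiguity that the Gauss--Manin derivative removes. A secondary subtlety, most acute when $\ell=1$, is guaranteeing that the relative cycle $\Xi$ is genuinely indecomposable on the very general fibre; I would address this by propagating indecomposability from the special high-rank members, where it is verified directly, along the family.
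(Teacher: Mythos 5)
Your opening moves match the paper's: spread the cycle over the family, lift the regulator class to a section $\nu$ of $\CO_S\otimes R^2\pi_*\C$, and differentiate the truncated normal function $\langle\nu,\omega\rangle$ using Gauss--Manin and Kodaira--Spencer. But there is a genuine gap at the pivotal step, where you route everything through the inhomogeneous Picard--Fuchs equation $DV=g$ and defer the theorem to an ``explicit evaluation of $g$ on a special fibre of high Picard rank,'' asserting this ``should give $g(s_0)\neq 0$.'' No argument is given, and none is readily available: the real-regulator value at a point does not compute $g$ (which involves derivatives of the membrane integral), the explicit computations you invoke from \S\S 5--6 exist only for the particular $M$-polarized/Kummer families with $\ell=18,19$ (so they could not prove the statement for all $\ell<20$, in particular not for $\ell=1$), and in the paper the logical flow is the \emph{reverse} of yours: \S\ref{SEC022} \emph{deduces} Picard--Fuchs detectability ($P\langle\nu,\omega\rangle\neq 0$) from regulator non-triviality, rather than using it to prove Theorem \ref{T27}. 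You have thus replaced the theorem's content with an unverified computation.

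What the paper actually does is a short contradiction argument that needs no Picard--Fuchs analysis and no special fibre. Assume $\langle\nu,\omega\rangle=\langle\gamma,\omega\rangle$ for a flat rational class $\gamma$; horizontality gives $\langle\nabla_u\nu,\omega\rangle=0$ (note your phrase ``$\langle\nabla_{\kappa(v)}\nu,\omega\rangle$ is expressible through the membrane period'' is off --- this pairing vanishes identically, which is exactly what makes the trick work), so \eqref{E007} yields $\langle\nu-\gamma,\nabla_u\omega\rangle=0$ for all $u$. By Proposition \ref{PROP950} and Corollary \ref{C28} (the contraction $H^1_{\alg}(X,\Theta_X)\otimes H^{2,0}(X)\to H^{1,1}_v(X)$ is an isomorphism, proved by Serre duality), for a maximal family the classes $\nabla_u\omega$ together with $\omega$ generate $[c_1(L)]^\perp\cap F^1$, so $\nu-\gamma$ annihilates all of $F^1H^2_v$, i.e.\ $\ul{\cl}_{2,1}(\xi)=0$. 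This contradicts the non-triviality of the real regulator for the very general member of every rank-$\ell$ polarized family with $\ell<20$, which is the theorem of \cite{C-L1} --- precisely the input your proposal lacks. Two smaller corrections: the relative cycle $\Xi$ and its non-trivial reduced regulator come from the (non-explicit) deformation argument of \cite{C-L1}, not from curves and functions supplied by the $\ell$ polarization classes, whose symbols have no reason to carry non-trivial regulator; and your closing worry about propagating \emph{indecomposability} is a red herring --- the proof needs only $\ul{\cl}_{2,1}(\xi)\neq 0$, with indecomposability a consequence, not a hypothesis.
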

 
 From the proof of Theorem \ref{T27}, we deduce:
 
  \begin{cor}\label{CO78} Let $X/\C$ be a very general member of a family of surfaces
 for which $H_{\alg}^1(X,\Theta_X)\otimes H_v^{2,0}(X) \to H_{v}^{1,1}(X)$ is surjective.
 If the real regulator $r_{2,1} : \Ch^2(X,1) \to H_{v}^{1,1}(X,\R(1))$ is non-trivial,
 then so is the transcendental regulator $\Phi_{2,1}$.
 \end{cor}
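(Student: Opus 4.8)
The plan is to establish the contrapositive: assuming the transcendental regulator $\Phi_{2,1}$ vanishes for the very general member $X$, I will show that the real regulator $r_{2,1}$ must vanish as well. The bridge between the two is the Gauss-Manin derivative of the truncated normal function attached to a $K_1$-cycle, which is precisely the mechanism developed in the proof of Theorem \ref{T27}.

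First I would fix $\xi\in\Ch^2(X,1;\Q)$ and spread it out over a neighborhood of $0$ in $\Ss$ to obtain a family of cycles $\xi_s\in\Ch^2(\lambda^{-1}(s),1;\Q)$, hence a (generalized) normal function $\nu$ whose value at $s$ is $\Phi_{2,1}(\xi_s)\in H_v^{2,0}(\lambda^{-1}(s))^{\vee}/H^v_2(\lambda^{-1}(s),\Q)$. Concretely, for $\w$ a flat section of $H_v^{2,0}$ near $0$, the function $s\mapsto\nu(s)(\w)$ is computed by the membrane integral of the KLM formula. The vanishing hypothesis $\Phi_{2,1}\equiv 0$ for the very general fiber means that $\nu\equiv 0$ as a section over $\Ss$, so in particular all of its Gauss-Manin derivatives vanish at $0$.

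The heart of the argument is the computation of the derivative $\nabla_v\nu$ along $v\in T_0(\Ss)$. Differentiating the membrane integral and invoking Griffiths transversality, the derivative of $\nu(\w)$ is obtained by replacing the flat form $\w\in F^2H_v^2=H_v^{2,0}$ by its image under the connection, which lands in $F^1/F^2\cong H_v^{1,1}$ via contraction against the Kodaira-Spencer class. The resulting identity reads
\[
\nabla_v\,\nu(\w)\;=\;\pm\,\big\langle\, r_{2,1}(\xi),\,\kappa(v)\cdot\w\,\big\rangle,
\]
where $\kappa(v)\cdot\w\in H_v^{1,1}(X)$ denotes the image of $\kappa(v)\otimes\w$ under the contraction $H_{\alg}^1(X,\Theta_X)\otimes H_v^{2,0}(X)\to H_v^{1,1}(X)$ and $\langle\,,\,\rangle$ is the cup-product pairing on $H_v^{1,1}$. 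In other words, the infinitesimal invariant of the transcendental normal function recovers exactly the real regulator paired against the Kodaira-Spencer image. This step is the main obstacle: it requires carefully tracking how the Gauss-Manin connection acts simultaneously on the membrane and on the test form, and separating the $(1,1)$-contribution (the real regulator) from the remaining transcendental piece. I would extract this identity directly from the computation already carried out for Theorem \ref{T27}.

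Finally I would conclude. Since $\nu\equiv 0$ forces $\nabla_v\nu(\w)=0$ for every $v\in T_0(\Ss)$ and every $\w\in H_v^{2,0}(X)$, the displayed identity gives $\langle r_{2,1}(\xi),\,\kappa(v)\cdot\w\rangle=0$ for all such $v$ and $\w$. By the surjectivity hypothesis the elements $\kappa(v)\cdot\w$ exhaust $H_v^{1,1}(X)$; since the cup-product pairing is non-degenerate on $H_v^{1,1}(X,\R(1))$, it follows that $r_{2,1}(\xi)=0$. As $\xi$ was arbitrary, $r_{2,1}$ is trivial, which is the desired contrapositive and proves the corollary.
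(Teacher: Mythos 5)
Your overall route---spread the cycle, differentiate the truncated normal function via Gauss--Manin, use horizontality plus the Kodaira--Spencer surjectivity to sweep out $H^{1,1}_v$---is exactly the paper's argument in \S\ref{SEC02}, but two of your steps are misstated. First, vanishing of $\Phi_{2,1}$ does \emph{not} give ``$\nu\equiv 0$ as a section, so all Gauss--Manin derivatives vanish'': it gives $\langle\nu,\w\rangle=\langle\gamma,\w\rangle$ for some rational class $\gamma$, which is flat by countability of the period lattice, and the period function $s\mapsto\langle\gamma,\w_s\rangle$ is genuinely non-constant. (Relatedly, your ``flat section of $H^{2,0}_v$'' does not exist: in a non-isotrivial family no flat section stays in $F^2$; $\w$ must be a holomorphic section of the Hodge bundle, which is why the periods vary.) Differentiating correctly yields the paper's equation \eqref{E007}, $\langle\nu-\gamma,\nabla_u\w\rangle=0$; equivalently, you may absorb the flat class $\gamma$ into the lift $\nu$, and only after that adjustment do the derivatives of the truncated normal function vanish.

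Second, your displayed identity $\nabla_v\,\nu(\w)=\pm\langle r_{2,1}(\xi),\kappa(v)\cdot\w\rangle$ is false as stated. By horizontality the derivative equals the \emph{complex} pairing $\langle\nu-\gamma,\nabla_v\w\rangle=\langle\nu-\gamma,\kappa(v)\cdot\w\rangle$ (the $F^2$-component of $\nabla_v\w$ is killed because $\langle\nu-\gamma,\w'\rangle=0$ for every $\w'\in H^{2,0}_v$, which your hypothesis supplies); this is the infinitesimal invariant pairing a lift of the full reduced regulator $\ul{\cl}_{2,1}(\xi)$ against the Kodaira--Spencer image, a complex number that is \emph{not} determined by the real class $r_{2,1}(\xi)$ alone. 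Accordingly the paper concludes that $\nu-\gamma$ annihilates all of $F^1H^2_v(X,\C)$, i.e.\ $\ul{\cl}_{2,1}(\xi)=0$, and then invokes the fact that $r_{2,1}$ factors through $\ul{\cl}_{2,1}$ to contradict $r_{2,1}(\xi)\ne 0$. Your final conclusion survives because vanishing of the complex pairing implies vanishing of its $\R(1)$-projection, so $\langle r_{2,1}(\xi),\kappa(v)\cdot\w\rangle=0$ still follows and your nondegeneracy step is fine; but as written the ``heart'' of your argument conflates the transcendental and real regulators, and with these two repairs your proof coincides with the paper's.
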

 
 Now consider  $X$ of dimension $d$ as a very general member of a  family
 $\lambda: \X \to \Ss$.
 With a little bit of effort, one can also show
 the following.

\begin{thm}\label{T28} Suppose that the cup product induced map
\[
H^1_{\alg}(X,\Theta_X)\otimes H_v^{d-r+m+1-\ell,d-r+\ell}(X) \to H_v^{d-r+m-\ell,d-r+\ell+1}(X),
\]
is surjective for all $\ell = 0,...,m-1$.
Then $\ul{\cl}_{r,m}\ne 0 \Rightarrow \Phi_{r,m}\ne 0$.
\end{thm}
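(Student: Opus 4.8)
\emph{Strategy.} I would prove the contrapositive, $\Phi_{r,m}(\xi)=0\Rightarrow\ul{\cl}_{r,m}(\xi)=0$. Via the identification
\[
J\big(H^{2r-m-1}_v(X,\Q(r))\big)\isom\frac{F^{d-r+1}H^{2d-2r+m+1}_v(X,\C)^{\vee}}{H^v_{2d-2r+m+1}(X,\Q(d-r))},
\]
the class $\ul{\cl}_{r,m}(\xi)$ is a functional $\psi$ on $F^{d-r+1}H^{2d-2r+m+1}_v(X,\C)$, well defined modulo periods, and the explicit membrane formula of \cite{KLM} recalled above computes its restriction to the deeper step $F^{d-r+m+1}$, which is precisely $\Phi_{r,m}(\xi)$. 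Since $F^{d-r+m+1}\subseteq F^{d-r+1}$, vanishing of $\Phi_{r,m}(\xi)$ is vanishing of $\psi$ on that subspace, while $\ul{\cl}_{r,m}(\xi)=0$ is vanishing of $\psi$ on all of $F^{d-r+1}$. Thus everything comes down to recovering $\psi$ on the $m$ intermediate graded pieces $H^{d-r+m,d-r+1}_v(X),\dots,H^{d-r+1,d-r+m}_v(X)$, and the engine for this is the Gauss-Manin derivative. Because $X$ is very general I would first spread $\xi$ to a family over a base $\Ss'$ dominating a neighbourhood of $0\in\Ss$, obtaining a truncated normal function whose membrane periods $s\mapsto\int_{\Gamma_s}\w_s$ I may differentiate along Kodaira-Spencer directions $\kappa(v)$, $v\in T_0(\Ss)$.

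\emph{The descent.} This is an induction on the filtration level. Assume $\psi$ vanishes, over $\Ss'$, on $F^{d-r+m+1-k}$, the base case $k=0$ being $\Phi_{r,m}=0$. For a holomorphic section $\w$ of $F^{d-r+m+1-k}$ the period $\int_{\Gamma_s}\w_s$ is then identically zero modulo periods; as periods are flat for the Gauss-Manin connection, its derivative along $v$ vanishes. Differentiating under the integral sign and using Griffiths transversality, the leading term of this derivative is $\psi(\nabla_v\w)$, where $\nabla_v\w\in F^{d-r+m-k}$ has graded image the cup product $\kappa(v)\smile\ol{\w}\in H^{d-r+m-k,d-r+k+1}_v(X)$ and $\ol{\w}\in H^{d-r+m+1-k,d-r+k}_v(X)$ is the leading symbol of $\w$. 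As $v$ and $\ol{\w}$ vary, the surjectivity hypothesis for $\ell=k$ makes these cup products exhaust $H^{d-r+m-k,d-r+k+1}_v(X)$; combined with the inductive vanishing of $\psi$ on the higher step, this forces $\psi=0$ on $F^{d-r+m-k}$, which is the next case. Running $k=0,1,\dots,m-1$ against the $m$ hypotheses $\ell=0,\dots,m-1$ drives $\psi$ to zero on $F^{d-r+1}$, i.e. $\ul{\cl}_{r,m}(\xi)=0$.

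\emph{Main obstacle.} The delicate point is the phrase ``leading term.'' Differentiating the membrane $\int_{\Gamma_s}\w_s$ produces, besides $\psi(\nabla_v\w)$, boundary contributions from the variation of $\Gamma_s$ supported on the spread of $|\xi|$; the real work is to show that these contribute only to graded pieces already known to vanish, or vanish for Hodge-type reasons, so that the graded identity with $\kappa(v)\smile\ol{\w}$ is clean. This is exactly the analysis already carried out in the case $(d,r,m)=(2,2,1)$ behind \thmref{T27} and \coref{CO78}, where a single cup product $H^1_{\alg}(X,\Theta_X)\otimes H^{2,0}_v(X)\to H^{1,1}_v(X)$ suffices; here it must be propagated up the full ladder, which is the ``little bit of effort'' in the statement. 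A secondary point is to promote the pointwise hypothesis $\Phi_{r,m}(\xi)=0$ at the very general $X$ to a statement over $\Ss'$ suitable for differentiation; I would handle this through the infinitesimal invariant $\delta\nu$ of the spread, whose nonvanishing detects $\ul{\cl}_{r,m}(\xi)\ne0$ for very general $X$ and which the descent computes directly from the cup-product data.
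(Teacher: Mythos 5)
Your proposal is correct and takes essentially the same route as the paper: the proof there is exactly your descent induction, phrased as ``$\cl_{r,m}(\xi)$ is a period with respect to $F^{d-r+m-\ell}H_v^{2d-2r+m+1}(X,\C)$,'' run down the filtration via the Leibniz identity $u\langle \nu,\w\rangle = \langle \nabla_u\nu,\w\rangle + \langle\nu,\nabla_u\w\rangle$ with the horizontality $\langle\nabla_u\nu,\w\rangle=0$ (established in \S\ref{SEC02} by the Deligne-cohomology spectral sequence, which is precisely what disposes of your ``boundary contributions'') and the cup-product surjectivity hypothesis applied at each step $\ell=0,\dots,m-1$. Your handling of the spread and of the main obstacle matches the paper's mechanism, so there is nothing substantive to add.
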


Theorems \ref{T27}, \ref{T28} and Corollary \ref{CO78} will be proved in section \ref{SEC02}. We deduce from Theorem \ref{T28} the following:

\begin{cor}\label{C27} Let $X$ be a very general $K3$ surface, and $H_v^2(X,\C)$ be
transcendental cohomology.  Then  the transcendental regulator
 \[
 \Phi_{3,1} : \Ch^3(X\times X,1) \to \frac{\big\{F^3\big(H^2_v(X,\C)\otimes H^2_v(X,\C)
 \big)\big\}^{\vee}}{H_4(X\times X,\Q(1))},
 \]
 is non-trivial.
\end{cor}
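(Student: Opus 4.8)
The plan is to apply Theorem~\ref{T28} to the fourfold $Y=X\times X$ with $(d,r,m)=(4,3,1)$, restricted throughout to the K\"unneth summand $H^2_v(X)\otimes H^2_v(X)\subset H^4(X\times X)$ singled out in the statement. Here $d-r=1$, so the target of $\Phi_{r,m}$ is exactly $\{F^3(H^2_v\otimes H^2_v)\}^\vee/H_4(X\times X,\Q(1))$ as written, and since $m=1$ the hypothesis of Theorem~\ref{T28} must be verified only for $\ell=0$, where it reads: the cup product
\[
H^1_{\alg}(Y,\Theta_Y)\otimes H^{3,1}_v(Y)\longrightarrow H^{2,2}_v(Y)
\]
is surjective on the $H^2_v\otimes H^2_v$ summand. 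Granting this surjectivity together with $\ul{\cl}_{3,1}\neq 0$, Theorem~\ref{T28} yields $\Phi_{3,1}\neq 0$. Thus the proof splits into a cup-product computation and a nonvanishing statement for the reduced regulator.

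For the cup product I would realize $Y$ inside the self-product family $\X\times_{\Ss}\X\to\Ss$, so that (as $H^0(\Theta_X)=H^1(\CO_X)=0$) the relevant deformations form the diagonal copy of $H^1_{\alg}(X,\Theta_X)$, with $\theta$ acting on $T\otimes T$ through $\nabla_\theta\otimes 1+1\otimes\nabla_\theta$, where $T:=H^2_v(X)$ has $h^{2,0}=h^{0,2}=1$ and $h^{1,1}_v=19$. The input is the pair of surjections for a very general projective $K3$ that already drive Theorem~\ref{T27}, namely
\[
H^1_{\alg}(X,\Theta_X)\otimes H^{2,0}(X)\twoheadrightarrow H^{1,1}_v(X),\qquad H^1_{\alg}(X,\Theta_X)\otimes H^{1,1}_v(X)\twoheadrightarrow H^{0,2}(X),
\]
the second being adjoint to the first under the $K3$ intersection form $Q$. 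Writing $(T\otimes T)^{3,1}=H^{2,0}\otimes H^{1,1}_v\oplus H^{1,1}_v\otimes H^{2,0}$ and $(T\otimes T)^{2,2}=H^{1,1}_v\otimes H^{1,1}_v\oplus H^{2,0}\otimes H^{0,2}\oplus H^{0,2}\otimes H^{2,0}$, the diagonal action sends $\omega\otimes b$ to $(\nabla_\theta\omega)\otimes b+\omega\otimes(\nabla_\theta b)$ and $c\otimes\omega$ to $(\nabla_\theta c)\otimes\omega+c\otimes(\nabla_\theta\omega)$; as $\theta$ and $b,c$ vary these span the two codimension-one ``graph'' subspaces $\{w+f(w)\,\omega\otimes\bar\omega\}$ and $\{w+f(w)\,\bar\omega\otimes\omega\}$ of $(T\otimes T)^{2,2}$, where $f\colon H^{1,1}_v\otimes H^{1,1}_v\to\C$ is the contraction by $Q$. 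Their sum has dimension $361+361-360=362$.

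It remains to match this with $H^{2,2}_v$. Because the monodromy of $\X\times_\Ss\X$ is the diagonal $\pi_1(\Ss)\to O(T)$, the fixed part on $T\otimes T$ is the one-dimensional span of the (flat, Hodge type $(2,2)$) class $\Omega$ of the intersection form, so $H^{2,2}_v=\Omega^{\perp}$ has dimension $362$. Flatness of $\Omega$ and of the polarization $\langle\,,\rangle$ on $H^4$ gives $\langle\nabla_\theta x,\Omega\rangle=-\langle x,\nabla_\theta\Omega\rangle=0$, whence the image of the cup product lies in $\Omega^{\perp}=H^{2,2}_v$; comparing dimensions, the $362$-dimensional image equals $H^{2,2}_v$, and the hypothesis of Theorem~\ref{T28} holds.

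The hard part is then the nonvanishing $\ul{\cl}_{3,1}\neq 0$ on the transcendental summand, for which one must exhibit an actual class in $\Ch^3(X\times X,1;\Q)$ landing nontrivially in $J\big(H^2_v(X)\otimes H^2_v(X)\big)$. External products $c\times D$ of a $K_1$-cycle $c\in\Ch^2(X,1)$ with a divisor $D$ cannot do this: for very general $X$ one has $\mathrm{rk}\,\mathrm{NS}(X)=1$, so the class of $D$ is algebraic and $c\times D$ lands in $H^2(X)\otimes\mathrm{NS}(X)$, not in $H^2_v\otimes H^2_v$. One instead needs a genuinely indecomposable, ``transcendental-times-transcendental'' family of cycles mixing the two factors, whose reduced (or real) regulator is nonzero on the $T\otimes T$ piece; this is precisely the self-product construction of \S4. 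Granting its regulator nonvanishing, $\ul{\cl}_{3,1}\neq 0$ and Theorem~\ref{T28} finishes the proof.
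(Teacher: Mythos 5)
Your proposal is correct and takes essentially the same route as the paper: the paper likewise applies Theorem \ref{T28} to $Y=X\times X$ with $(d,r,m,\ell)=(4,3,1,0)$, verifies the cup-product surjectivity via the Leibniz rule, Proposition \ref{PROP950}, and orthogonality to the diagonal class (your graph/dimension count modulo $\Omega^{\perp}$ is exactly their Proposition \ref{PROP001} together with the monodromy-irreducibility identification of the fixed part in \eqref{E011}), and then concludes from the nonvanishing of the reduced real regulator $\ul{r}_{3,1}$ on self-products established in \cite{C-L2}. Your added remark that external products $c\times D$ cannot hit the $H^2_v\otimes H^2_v$ summand is a correct (if not strictly necessary) supplement to the same argument.
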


We prove Corollary \ref{C27} in section \ref{SEC03}. In turns out however, that with more effort, we can
actually prove the following stronger result:

\begin{thm}\label{T53}
The purely transcendental regulator
 \[
\Psi_{3,1} : \Ch^3(X\times X,1) \to \frac{H^{4,0}(X\times X,\C)^{\vee}}{H_4(X\times X,\Q(1))},
 \]
 is non-trivial for a very general $K3$ surface $X$.
\end{thm}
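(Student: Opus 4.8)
The plan is to deduce Theorem \ref{T53} from \coref{C27} by climbing one further step up the Hodge filtration. Recall that $\Psi_{3,1}$ differs from $\Phi_{3,1}$ only by restricting functionals from $F^3(H^2_v\otimes H^2_v)=H^{4,0}\oplus H^{3,1}_v$ to the line $H^{4,0}(X\times X)=H^{2,0}(X)\otimes H^{2,0}(X)$; hence \coref{C27} by itself is insufficient, since the functional it produces could vanish on $H^{4,0}$ and be supported entirely on $H^{3,1}_v$. Writing $\omega\in H^{2,0}(X)$ for the holomorphic two-form, the plan is to spread $\xi\in\Ch^3(X\times X,1)$ to a normal function $\nu$ over the base, form the multivalued $(4,0)$-period $R(s)=\Phi_{3,1}(\xi_s)(\omega_s\boxtimes\omega_s)$, and show that $R$ satisfies an \emph{inhomogeneous} Picard--Fuchs equation with nonzero source. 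Since any combination of periods (elements of $H_4(X\times X,\Q(1))$ paired with $\omega\boxtimes\omega$) satisfies the \emph{homogeneous} equation, a nonzero source forces $R$ out of the period lattice at the very general fiber, i.e. $\Psi_{3,1}\neq 0$.

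To produce and analyze the source term I would deform the two factors independently, working over $\Ss\times\Ss$ with Kodaira--Spencer image $H^1_{\alg}(X,\Theta_X)\oplus H^1_{\alg}(X,\Theta_X)\subset H^1_{\alg}(X\times X,\Theta_{X\times X})$, and spread $\xi$ accordingly. Following the Gauss--Manin/inhomogeneous Picard--Fuchs technique of \S\ref{SEC03}, differentiating the regulator period gives
\[
\nabla_{\theta}R=\langle \nu,\nabla_{\theta}(\omega\boxtimes\omega)\rangle+g_{\theta},
\]
where $g_\theta$ is the inhomogeneous (membrane-boundary) contribution and, by Griffiths transversality, $\nabla_\theta$ carries $\omega\boxtimes\omega$ from $F^4$ into $F^3$. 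Applying $\nabla_{\theta_1}$ in the first factor and $\nabla_{\theta_2}$ in the second, the homogeneous part of $\nabla_{\theta_2}\nabla_{\theta_1}R$ pairs $\nu$ against the $(2,2)$-class $(\theta_1\cup\omega)\boxtimes(\theta_2\cup\omega)\in H^{1,1}_v(X)\otimes H^{1,1}_v(X)\subset H^{2,2}_v(X\times X)$, that is, against the middle Hodge piece where the real regulator lives.

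The new ingredient beyond \coref{C27} is precisely this extra Hodge step. For a very general $K3$ surface the period map is an immersion, so the cup product $H^1_{\alg}(X,\Theta_X)\otimes H^{2,0}(X)\to H^{1,1}_v(X)$ is surjective, and $\{\theta\cup\omega\}$ exhausts $H^{1,1}_v(X)$. Via the product structure this yields the surjection $H^1_{\alg}(X\times X,\Theta_{X\times X})\otimes H^{4,0}\twoheadrightarrow H^{3,1}_v=H^{1,1}_v\otimes H^{2,0}\oplus H^{2,0}\otimes H^{1,1}_v$, which is exactly the step $(4,0)\to(3,1)$ that \thmref{T28} (with $\ell=0$) did not supply. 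Chaining it with the surjectivity $H^{3,1}_v\to H^{2,2}_v$ already exploited for \coref{C27} shows that the two Gauss--Manin derivatives can carry $\omega\boxtimes\omega$ onto any prescribed class in $H^{1,1}_v\otimes H^{1,1}_v$.

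The hard part is to show that the source survives this climb: that the non-triviality witnessed in \coref{C27}, which a priori could sit entirely on $H^{3,1}_v$, genuinely contributes in the $H^{4,0}$ direction, i.e. that $\nabla_{\theta_2}\nabla_{\theta_1}R\not\equiv 0$ for suitable $\theta_1,\theta_2$. Concretely one must identify the iterated inhomogeneous term in the two-variable Picard--Fuchs equation with a nonzero multiple of the reduced class $\ul{\cl}_{3,1}(\xi)$ transported through the surjections above --- equivalently, with the real-regulator pairing $\langle \ul{\cl}_{3,1}(\xi),\,H^{1,1}_v\otimes H^{1,1}_v\rangle$ --- and rule out cancellation against the homogeneous/period contributions. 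This non-cancellation, feeding the nonvanishing of $\ul{\cl}_{3,1}$ from \coref{C27} back into the top of the filtration, is where I expect the genuine difficulty to lie; the single-factor surjectivity and the transversality bookkeeping in the remaining steps are routine.
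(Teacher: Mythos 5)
There is a genuine gap, and it sits exactly where your construction starts: the independent deformation over $\Ss\times\Ss$ presupposes that $\xi$ spreads to a family of higher Chow cycles on $X_{s_1}\times X_{s_2}$ for independently varying $s_1,s_2$. No such spread is available. The cycles with $\ul{r}_{3,1}\ne 0$ come from the explicit construction of \cite{C-L2} on \emph{self-products}; they exist (and deform) only over the diagonal $\Ss\hookrightarrow\Ss\times\Ss$, and off the diagonal there is no normal function to differentiate, so the surjectivity you extract from independent Kodaira--Spencer directions, $\bigl(H^1_{\alg}(X,\Theta_X)\oplus H^1_{\alg}(X,\Theta_X)\bigr)\otimes H^{4,0}\twoheadrightarrow H^{3,1}_v$, cannot be fed into the Gauss--Manin argument. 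The paper is explicit about this obstruction: along the actual family $\Y=\X\times_S\X$ one has $H^1_{\alg}(Y,\Theta_Y)\simeq H^1_{\alg}(X,\Theta_X)\simeq\C^{19}$, and by dimension count ($19<38=\dim H^{3,1}_v(Y)$) \emph{neither} map in \eqref{E666} is surjective --- which is precisely why the naive extension of \coref{C27} fails and why your route, which silently restores surjectivity by enlarging the base, does not repair it. Moreover, along the diagonal the Leibniz rule gives $\nabla_\beta\nabla_\alpha(\omega\tensor\omega)=\delta_\alpha\delta_\beta\omega\tensor\omega+\delta_\alpha\omega\tensor\delta_\beta\omega+\delta_\beta\omega\tensor\delta_\alpha\omega+\omega\tensor\delta_\alpha\delta_\beta\omega$, not the clean factorized class $(\theta_1\cup\omega)\tensor(\theta_2\cup\omega)$ your two-variable differentiation was meant to produce.

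The second problem is that the step you defer --- ruling out cancellation so that the non-vanishing genuinely reaches the $H^{4,0}$ direction --- is the entire content of the theorem, and the paper resolves it not by an inhomogeneous Picard--Fuchs source-term analysis but by a symmetry trick you are missing. One passes to the swap involution $\sigma$ on $Y=X\times X$ and works with $H^4(Y,\Q)^\sigma$ and $\CH^3(Y,1)^\sigma$: using \eqref{E031}, the resulting identities \eqref{E034}--\eqref{E035}, and the symmetric nondegenerate pairing \eqref{E026}, one shows in \eqref{E036}--\eqref{E037} that the spans of the first and iterated second Gauss--Manin derivatives of $\omega\tensor\omega$ are exactly $[\Delta_X]^\perp\cap H^{3,1}(Y)^\sigma$ and $[\Delta_X]^\perp\cap H^{2,2}(Y)^\sigma$, so that the total orthogonal complement \eqref{E038} inside $H^4(Y,\C)^\sigma$ is zero. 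Thus if $\Psi_{3,1}(\xi)$ were a period for a $\sigma$-equivariant $\xi$, the reduced class would be forced into the fixed part, contradicting $\ul{r}_{3,1}(\xi)\ne 0$; the needed input is then the existence of an equivariant cycle with nonzero reduced real regulator, which is Theorem \ref{THM002} (a consequence of the construction in \cite{C-L2}). So your overall instinct (climb one Hodge step using first and second derivatives of $\omega\tensor\omega$) matches the paper, but without the equivariant restriction the spans are too small, and without Theorem \ref{THM002} the non-cancellation you flag as the ``hard part'' remains unproved.
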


The proofs of all the above results rely on a very simple trick involving the
infinitesimal invariant of a normal function associated to a family of cycles
on $\X/\Ss$ inducing a given transcendental regulator value on $X$. A 
deeper question asks whether such a normal function is detected by
a Picard-Fuchs operator. A blanket answer to this question is a yes;
however rather than explain it here, we provide a complete clarification in \S \ref{SEC022}.
\medskip

Now returning to Theorem \ref{T27}, two questions come to mind. First, the
method of {\cite{C-L1}}, which proves the existence of deformations of
decomposables on Picard rank 20 $K3$'s, to indecomposables on a general
polarized $K3$, is highly non-explicit. How can one construct interesting
\emph{explicit} examples of cycles with nontrivial $\Phi_{2.1}$ on
subfamilies with $\ell>1$? Second, on a Picard-rank 20 $K3$, does
one \emph{expect} there to be any cycles at all which have nontrivial
$\Phi_{2,1}$, and which are therefore indecomposable?

The first question is our main concern for the remainder of the paper.  In \S \ref{SECImz}, 
we introduce a crucial set of tools needed for explicit computations in this setting.  
The notion of a polarized K3 surface is extended to that of a lattice polarization, and 
algebraic hypersurface normal forms are given for certain families of lattice polarized
K3 surfaces of high Picard rank $\ell$.  We then describe a very useful ``internal structure'' consisting of
an elliptic fibration with section(s).  Explicit Picard-Fuchs operators are given and related to
parametrizations of coarse moduli spaces by modular functions and their generalizations.

Starting in \S \ref{SEC04}, we restrict our considerations to $\ell=18$ or $19$,
where there have been a number of ideas that have not panned out.
The article by {\cite{PL-MS}}, which in itself is an interesting piece of work, 
considers a cycle $\mathcal{Z}$ on a 1-parameter
family of elliptically fibered $K3$'s with $\ell=19$ and a choice
of section $\omega$ of the relative canonical bundle. In this context
$F:=\Phi_{2,1}(\mathcal{Z})(\omega)$ is a multivalued holomorphic
function and the indecomposability of $\mathcal{Z}$ may be detected
by showing the Picard-Fuchs operator for $\omega$ does not annihilate
it. Unfortunately, this cycle turns out to be 2-torsion,\footnote{the cycle, which is supported over $\{Z=0\}\cup\{Z=1\}\cup\{X=0\}\cup\{X=\infty\}$
in the notation of {\cite{PL-MS}}, is in fact one-half the residue of
the symbol $\left\{ X,1-\frac{1}{Z}\right\} $.
} and the computation of $F$ leaves out a part of the membrane intergral
which cancels the part written down. For $\ell=18$, one can try to
construct regulator-indecomposable cycles on a product $E_{1}\times E_{2}$
of elliptic curves and then pass to the Kummer. Such a construction
is attempted in {\cite{G-L}} but this cycle, too, was shown by M. Saito
to be decomposable.\footnote{that construction can, however, be corrected \cite{T}.
} When $E_{1}\cong E_{2}$, other authors (cf. {\cite{Zi}}) have investigated
{}``triangle cycles'' supported on $E\times\{p\}$, $\{q\}\times E$,
and the diagonal $\Delta_{E}$, where $[p]-[q]$ is $N$-torsion.
But this cannot produce indecomposable cycles, since the sum of the
natural $N^{2}$ $N$-torsion translates of such a cycle (by integer
multiples of $p-q$ on the two factors) is both visibly decomposable
and (up to torsion) equivalent to $N^{2}$ times the original cycle.

With this discouraging history, it is easy to imagine that when $X$
is an elliptically fibered $K3$, the very natural $\CH^{2}(X,1)$
classes supported on semistable singular (Kodaira type $I_{n}$) fibers
might be decomposable as well. Indeed one knows in the case of a modular
elliptic fibration ($K3$ or not), that Beilinson's Eisenstein symbols
{\cite{Be}} kill all such classes. On the other hand, using arithmetic
methods to bound the rank of the $\text{dlog}$ image, Asakura {\cite{As}}
demonstrated that for elliptic surfaces with general fiber $y^{2}=x^{3}+x^{2}+t^{n}$
($n\in[7,29]$ prime), the type $I_{1}$ fibers generate $n-1$ independent
indecomposable $K_{1}$ classes. His paper stops short of attempting
any regulator computations for such cycles, and this is what we take
up in $\S3$ in the context where the surface and cycle are allowed
to vary.

Specifically, using an $I_{1}$ fiber in an internal elliptic fibration
of the 2-parameter family $\{\xab\}$ of Shioda-Inose $K3$'s ($\ell=18$)
{\cite{C-D2}}, we write down a (multivalued) family of cycles $\mathcal{Z}_{a,b}\in \CH^{2}(\xab,1)$.
Passing to the associated Kummer family with parameters $\alpha,\beta$
(and cycle $\Zab$), we find that the family of cycles becomes single-valued
over the diagonal ($\ell=19$) sublocus $\a=\b$, which is the Legendre
modular curve $\PP^{1}\backslash\{0,1,\infty\}\cong\mathfrak{H}/\Gamma(2)$.
At this point we write down a smooth family of real closed $(1,1)$
forms $\eta_{\a}$ and compute directly the function \[\begin{matrix} \psi(\alpha):=r_{2,1}(\Zr_{\a,\a})(\eta_{\a})= 
\\ \\
-8|\a+1|{\rm Im}\int_{\C}z\cdot\log\left|\frac{z+i}{z-i}\right|\left\{ \begin{array}{c}
\frac{\left\{ (\a^{2}-\a-1)z^{4}+2z^{2}+(\a^{3}-\a^{2}-2\a+1)\right\} }{|z^{2}-\a||1-\a z^{2}||z^{2}+1||z^{2}-(1+\a-\a^{2})|}\times\\ \\ 
\frac{\overline{\left\{ (\a^{3}-\a^{2}-2\a+1)z^{4}+2z^{2}+(\a^{2}-\a-1)\right\} }}{|(1+\a-\a^{2})z^{2}-1||z^{4}+(\a^{3}-3\a)z^{2}+1|}\end{array}\right\} dx\wedge dy
\end{matrix}
\] to be nonzero. By Corollary 1.3 we have immediately the
\begin{thm}
$\Phi_{2,1}(\mathcal{Z}_{a,b})$ is non-trivial for very general $(a,b)$,
and $\mathcal{Z}_{a,b}$ is indecomposable.\end{thm}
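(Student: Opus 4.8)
The plan is to run the explicit non-vanishing of $\psi$ through \coref{CO78} and then to spread the conclusion off the diagonal onto the full $2$-parameter base. Since $r_{2,1}(\Zr_{\a,\a})(\eta_{\a})=\psi(\a)\neq 0$, the reduced real regulator $r_{2,1}(\Zr_{\a,\a})\in H^{1,1}_v(X,\R(1))$ is non-zero on the $\ell=19$ diagonal. My first move is to apply \coref{CO78} to the normal function attached to $\Zr_{\a,\a}$ over the $1$-parameter diagonal (Legendre) family: since its real regulator is non-zero, the cup-product mechanism behind that corollary forces $\Phi_{2,1}(\Zr_{\a,\a})\neq 0$ for very general $\a$.

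Before invoking \coref{CO78} I would verify its hypothesis, that $H^1_{\alg}(X,\Theta_X)\otimes H^{2,0}_v(X)\to H^{1,1}_v(X)$ is surjective for the diagonal family. For Picard rank $\ell=19$ the transcendental lattice has rank $3$, so $\dim H^{2,0}_v=\dim H^{1,1}_v=1$ and the $1$-parameter family contributes a $1$-dimensional $H^1_{\alg}(X,\Theta_X)$; surjectivity then amounts to the non-vanishing of a single cup-product coefficient, i.e. to the immersivity of the period map, which I would read off from the non-constancy of the explicit modular parametrization by $\PP^1\backslash\{0,1,\infty\}\isom\mathfrak{H}/\Gamma(2)$.

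Two transfers then finish the proof. First, I would use the Shioda--Inose correspondence between $\xab$ and its Kummer, an algebraic correspondence inducing a Hodge isometry of transcendental lattices (up to scale) and carrying $\mathcal{Z}_{a,a}$ to $\Zr_{\a,\a}$ modulo torsion; since it identifies the targets of $\Phi_{2,1}$, non-triviality transfers and $\Phi_{2,1}(\mathcal{Z}_{a,a})\neq 0$ on the $\ell=19$ diagonal. Second, I would observe that $(a,b)\mapsto \Phi_{2,1}(\mathcal{Z}_{a,b})$ is a holomorphic normal function over the $2$-parameter base whose zero locus is monodromy-invariant; being non-zero somewhere on the diagonal it is not identically zero, so its zero locus is a proper analytic subset and $\Phi_{2,1}(\mathcal{Z}_{a,b})\neq 0$ for very general $(a,b)$. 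Indecomposability is then automatic: any decomposable class in $\Ch^2(X,1)$ has image in the monodromy-fixed part and so vanishes under the reduced regulator $\ul{\cl}_{2,1}$ through which $\Phi_{2,1}$ factors, whence non-triviality of $\Phi_{2,1}(\mathcal{Z}_{a,b})$ precludes decomposability.

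The genuinely hard step is the one already carried out before the statement: the direct evaluation $\psi(\a)\neq 0$ of the regulator (membrane) integral, which demands the precise family of test forms $\eta_{\a}$, a careful treatment of the multivalued integrand, and is what ties the computation to higher Green's functions on $X(2)$. Within the deduction the only delicate points I anticipate are confirming the cup-product hypothesis of \coref{CO78} for the $1$-parameter diagonal family (as opposed to the full $20$-dimensional family of \thmref{T27}) and the bookkeeping that the Shioda--Inose correspondence genuinely preserves the transcendental regulator, so that passing to the Kummer costs no information.
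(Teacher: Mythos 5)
Your proposal is correct in substance and shares the paper's skeleton --- the hard analytic input $\psi(\a)\neq 0$ fed into Corollary \ref{CO78} --- but it organizes the deduction differently, and the difference is worth recording. The paper spreads the \emph{real} regulator before invoking the corollary: since the cycle $\Zr_{\a,\b}$ exists off the diagonal (with only an order-two sign ambiguity in $\z^{\pm 1}$, which cannot affect vanishing), the pairing integral extends real-analytically in $(\a,\b)$, so its nonvanishing on $\a=\b$ gives nonvanishing for generic $(\a,\b)$; the interpretation in \S 6.5, via the explicit pushforward identities $\pi_{1,*}\tilde{\Zr}_{\a,\b}=2\Zr_{\a,\b}$ and $\mathcal{Z}_{a,b}=\pi'_{2,*}(\tilde{\Zr}'_{\a,\b})$ and functoriality of $r_{2,1}$, then shows this same integral computes the real regulator of the trio of cycles on $E_\a\times E_\b$, $\kab$, and $\xab$; finally Corollary \ref{CO78} is applied once, to the maximal two-parameter $M$-polarized family, whose cup-product hypothesis holds by Corollary \ref{C28}. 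You instead apply Corollary \ref{CO78} already on the $\ell=19$ diagonal family (your verification of its hypothesis there --- one cup-product coefficient, equivalent to non-isotriviality/local Torelli --- is fine), and then spread the \emph{transcendental} regulator holomorphically off the diagonal. Both routes close, but the paper's ordering buys real simplicity: the real regulator is an honest real number with no period ambiguity, so both the Shioda--Inose transfer and the spreading step are bookkeeping-free, whereas $\Phi_{2,1}$ is defined only modulo a countable dense group of $\Q$-periods, which you must carry through both your transfer and your zero-locus argument (the transfer still works rationally because the correspondence makes the transcendental lattices commensurable, as you anticipate).

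Two points in your final steps need sharpening, though neither is fatal. First, the vanishing locus of $(a,b)\mapsto \Phi_{2,1}(\mathcal{Z}_{a,b})$ is not a single proper analytic subset: the condition is that the locally holomorphic, multivalued truncation $\langle\nu,\w\rangle$ equal some period $\langle\gamma,\w\rangle$ with $\gamma\in H_2(X_t,\Q)$, so the locus is a \emph{countable union} of analytic subsets, one for each monodromy orbit of $\gamma$. The argument survives --- nonvanishing at one very general diagonal point shows each member of the union is proper, and ``very general'' tolerates countable unions --- but it must be phrased this way; and since $\mathcal{Z}_{a,b}$ is multivalued over the two-parameter base, the normal function lives on a double cover or on local sheets, with the vanishing locus descending because it is invariant under $\mathcal{Z}\mapsto-\mathcal{Z}$. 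Second, Corollary \ref{CO78} is stated as a non-triviality assertion about the maps $r_{2,1}$ and $\Phi_{2,1}$; what you (and the paper) actually use is the cycle-specific version visible in its proof in \S\ref{SEC02}: non-triviality of $\ul{\cl}_{2,1}(\xi)$ for the fixed spread-out family $\xi$ forces $\Phi_{2,1}(\xi)\neq 0$ on the very general fiber. Your indecomposability step is the same as the paper's and is correct: decomposables pair trivially against $H^{2,0}$, so $\Phi_{2,1}(\mathcal{Z}_{a,b})\neq 0$ precludes decomposability.
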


In light of the past confusion surrounding such constructions,
such a natural source of indecomposable cycles seems to us an
important development.  While the explicit formula above may
not look promising, $\psi(\alpha)$ is in fact a very interesting function. Dividing
out by the volume of the Legendre elliptic curve and pulling back
by the classical modular function $\lambda$ to obtain a function
$\tilde{\psi}(\tau)$ on $\mathfrak{H}$, yields a {}``Maass cusp
form with two poles''. That is, $\tilde{\psi}$ is $\Gamma(2)$-invariant,
is smooth away from the $\lambda$-preimage of $\a=\{-1,2\}$ (where it
has $\log|\cdot|$ singularities), dies at the 3 cusps, and (away
from these bad points) is an eigenfunction of the hyperbolic Laplacian
$-y^{2}\Delta$. This will be shown in a follow-up paper of the third
author {\cite{Ke}}.\footnote{which, it should be noted, relies crucially
on the computation here.}

Finally, we turn briefly to the second question, concerning the case
$\ell=20$, in $\S\ref{SECTrReg}$. Due to the vanishing of $H_{v}^{1,1}(X,\R)$,
$r_{2,1}$ is zero by definition, but this is no reason for the transcendental
Abel Jacobi map $\Phi_{2,1}$ to vanish. In the example we work out,
whether or not $\Phi_{2,1}(\mathcal{Z})$ is nontorsion boils down
to the irrationality of a single number (cf. \eqref{kappa}), which
we do not know how to prove directly. It seems likely both that the
cycle is indecomposable and that this may be shown by using the methods
in {\cite{As}} to compute the $\text{dlog}$ image.

\subsection*{Acknowledgments}

X. Chen, C. Doran and J. Lewis are partially supported by grants from the Natural Sciences and Engineering
Research Council of Canada.  M. Kerr is partially supported by National Science Foundation grant DMS-1068974.  The authors also thank Adrian Clingher for helpful conversations.

\section{Derivatives of normal functions I}\label{SEC02}

\subsection{Gauss-Manin derivatives}

We first prove Theorem \ref{T27} and Corollary \ref{CO78},  after which the general argument
pertaining to Theorem \ref{T28} will follow rather easily.
Consider a smooth family $\pi: \X\to S$ of $K3$ surfaces 
polarized by a relatively ample line bundle $L$
over a polydisk $S$, with central fiber $X$. We have the Gauss-Manin (GM) connection:
\begin{equation}\label{E900}
\nabla: \CO_{S}\otimes R^q\pi_* \C \to \Omega^1_{S} \tensor R^q \pi_* \C
\end{equation}
which is a flat connection that $\nabla^2 = 0$ and
satisfies the Griffiths transversality:
\[
\nabla  \big(\CO_{S}\otimes F^pR^q\pi_* \C\big) \subset
\Omega^1_{S}\tensor  F^{p-1}R^q \pi_* \C.
\]

Let $\Theta_S$ be the holomorphic tangent bundle of $S$. We can think of $\Theta_S$ as the sheaf of holomorphic
linear differential operators. By identifying $\partial/\partial z_k$ with
$\nabla_{\partial/\partial z_k}$, $\Theta_S$ acts on $\CO_S\tensor R^q\pi_* \C$ via
\begin{equation}\label{E907}
u \cdot\omega = \nabla_{u}\omega
\end{equation}
for $u\in H^0(\Theta_S)$, where we write $H^0(-)$ for $H^0(S,-)$.
\medskip

We fix a nonzero section $\omega\in H^0(K_{\X/S})$.
For all $u\in H^0(\Theta_S)$ and  all $\gamma\in H^2(X, \C)$
(where  $H^2(X, \C)$ is identified with $ H^0(S,R^2\pi_{\ast}\C)$,
using $S$ a polydisk),
\begin{equation}\label{E901}
u\langle \gamma,\w\rangle = \langle \gamma,\nabla_u\w\rangle.
\end{equation}
Let $\xi\in \CH^2(\X/S, 1)$ be the be the result of an algebraic deformation 
of a cycle in the central fiber $X$ restricted to $\X/S$, and $\cl_{2,1}$ be the regulator map
\begin{equation}\label{E903}
\cl_{2,1}: \CH^2(\X/S, 1) \to H^0\biggl(\frac{\CO_S\tensor R^2 \pi_* \C}{\CO_S\tensor F^2
R^2\pi_* \C + R^2\pi_* \Q(2)}\biggr).
\end{equation}
We let $\nu$ be a lift of $\cl_{2,1}(\xi)$ to $H^0(\CO_S\tensor R^2\pi_* \C)$.
We know that $\langle \nabla_u \nu, \omega\rangle = 0$ since the map
\begin{equation}\label{E910}
\begin{split}
\nabla\circ \cl_{2,1}: \CH^2(\X/S, 1) &\xrightarrow{\cl_{2,1}}
H^0\biggl(\frac{\CO_S\tensor R^2\pi_* \C}{\CO_S\tensor
F^2 R^2 \pi_* \C + R^2\pi_* \Q(2)}\biggr)\\
& \xrightarrow{\nabla}
H^0\biggl(\Omega^1_S\tensor \frac{R^2\pi_* \C}{F^1 R^2\pi_* \C} \biggr)
\end{split}
\end{equation}
induced by the GM connection is trivial.
This follows from the horizontality condition
on normal functions associated to (higher Chow) algebraic cycles - well known
among experts, the horizontality
condition (leading to the infinitesimal invariant of normal functions) being
deducible for example from a Deligne cohomology spectral sequence argument
in \cite{C-MS-P} (pp. 267-269), and adapted to higher Chow cycles.
 [For the convenience of the reader, here is how the argument
works. Recall the analytic Deligne complex $0\to \Z(r) \to \Omega_{\X}^{\bullet<r}$, which leads to
an exact sequence $\HH^{2r-m-1}(\Omega_{\X}^{\bullet<r}) \to H^{2r-m}_{\D}(\X,\Z(r)) \to
H^{2r-m}(\X,\Z(r))$. We consider a null-homologous cycle in $\Ch^r(X,m)$ 
that spreads to a (relatively null-homologous) cycle on $\Ch^r(\X/S,m)$, which
will map to zero in $H^{2r-m}(\X,\Z(r))$ (as $S$ is a polydisk), and hence the induced
normal function has a lift in $\HH^{2r-m-1}(\Omega_{\X}^{\bullet<r})$.  The Leray spectral sequence
gives us an edge map $\HH^{2r-m-1}(\Omega_{\X}^{\bullet<r}) \to H^0(S,
\R^{2r-m-1}\pi_{\ast} \Omega_{\X}^{\bullet<r})$. One has a filtering of the complex
$\LL^{\nu}\Omega^{\bullet<r}_{\X} :=$ Image $\big(\pi^{\ast}\Omega_{S}^{\nu}\otimes \Omega^{\bullet<r-\nu}_{\X} \to \Omega_{\X}^{\bullet<r}\big)$, with $Gr_{\LL}^{\nu} = \pi^{\ast}\Omega_S^{\nu}\otimes
\Omega_{X/S}^{\bullet < r-\nu} \simeq \Omega_S^{\nu}\otimes \Omega_{X/S}^{\bullet < r-\nu}$.
There is a spectral sequence computing $\R^{p+q}\pi_{\ast}\Omega_{\X}^{\bullet<r}$ with
$\E_1^{p,q} = \R^{p+q}Gr^p_{\LL} = \Omega_S^p\otimes\R^q
\rho_{\ast}\Omega^{\bullet <r-p}_{\X/S}$. So we have the composite
$H^0(S,\R^{2r-m-1}\pi_{\ast}\Omega_X^{\bullet<r}) \to H^0(S,\E_1^{0,2r-m-1})
\xrightarrow{d_1} H^0(S,\E_1^{1,2r-m-1})$, which must be zero by spectral sequence
degeneration, using the fact that $\E^{0,2r-m-1}_{\infty} \subset \ker \big(d_1 :
\E^{0,2r-m-1}_1 \to \E^{1,2r-m-1}_1\big)$.  But $H^0(S,\E_1^{0,2r-m-1})
\xrightarrow{d_1} H^0(S,\E_1^{1,2r-m-1})$ is precisely  the Gauss-Manin connection
\[
H^0(S,\R^{2r-m-1}\pi_{\ast}\Omega_{\X/S}^{\bullet<r}) \xrightarrow{\nabla}
H^0(S,\Omega_S^1\otimes\R^{2r-m-1}\pi_{\ast}\Omega_{\X/S}^{\bullet<r-1}).]
\]

\subsection{Nontriviality of transcendental regulators}

Now assume to the contrary that $\cl_{2,1}(\xi)(\w)$
is trivial. Then $\langle \nu,\w\rangle$ is a period, i.e. $\langle \nu,\w\rangle 
= \langle \gamma,\w\rangle$ for some $\gamma\in H^2(X,\Q(2))$. Applying $\nabla_{u}$
together with the horizontality condition on normal functions, we deduce that
\begin{equation}\label{E007}
\langle \gamma,\nabla_u\w\rangle = u\langle \gamma,\w\rangle = 
u\langle \nu,\w\rangle = \langle \nabla_u\nu,\w\rangle + \langle \nu,\nabla_u\w\rangle
=  \langle \nu,\nabla_u\w\rangle.
\end{equation}

It is well known that the projection of $\nabla_u \omega$ to $H^{1,1}(X_t)$ is the cup product
of $\kappa(u)$ and $\omega$, where $\kappa$ is the Kodaira-Spencer map
\begin{equation}\label{E912}
\Theta_{S,t} \xrightarrow{\kappa} H^1(X_t, \Theta_{X_t})
\end{equation}
at a point $t\in S$ (Griffiths). The following proposition, which is likely well-known,
shows that this cup product is surjective for $K3$
surfaces.

\begin{prop}\label{PROP950}
For a $K3$ surface $X$, the map
\begin{equation}\label{E950}
H^1(X, \Theta_X) \tensor H^{2,0}(X) \to H^{1,1}(X)
\end{equation}
induced by the contraction $\Theta_X \tensor \wedge^2 \Omega^1_X\to \Omega^1_X$
is an isomorphism, where $\Theta_X$ is the tangent bundle of $X$.
\end{prop}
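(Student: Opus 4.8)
The plan is to reduce the assertion to a statement about sheaves and then apply cohomology. First I would observe that for a $K3$ surface the canonical bundle $K_X = \wedge^2\Omega^1_X$ is trivial, so $H^{2,0}(X) = H^0(X,\wedge^2\Omega^1_X) = H^0(X,K_X)$ is one-dimensional and is generated by a nowhere-vanishing holomorphic two-form $\omega$. Fixing this $\omega$, the map in \eqref{E950} becomes, up to the scalar coming from the identification $H^{2,0}(X) \cong \C\cdot\omega$, the map on $H^1$ induced by the sheaf homomorphism $\iota_\omega : \Theta_X \to \Omega^1_X$, $v \mapsto v \lrcorner \omega$. Thus it suffices to prove that $\iota_\omega$ is an isomorphism of locally free sheaves and then invoke the functoriality of $H^1(X,-)$.

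Next I would verify that $\iota_\omega$ is an isomorphism of rank-two bundles. The cleanest route is the canonical identification available for any rank-two locally free sheaf $E$: the pairing $E \otimes E \to \wedge^2 E$ yields $E^\vee \cong E \otimes (\wedge^2 E)^{-1}$. Applied to $E = \Omega^1_X$ this gives $\Theta_X = (\Omega^1_X)^\vee \cong \Omega^1_X \otimes K_X^{-1}$, and the contraction $v \otimes \sigma \mapsto v \lrcorner \sigma$ realizes precisely this isomorphism $\Theta_X \otimes K_X \xrightarrow{\sim} \Omega^1_X$; trivializing $K_X$ by the nowhere-vanishing section $\omega$ then identifies $\iota_\omega$ with it. Alternatively one checks nondegeneracy by a one-line local computation: writing $\omega = f\,dx\wedge dy$ with $f$ nowhere zero, the matrix of $\iota_\omega$ in the frames $\{\partial_x,\partial_y\}$ and $\{dx,dy\}$ is antisymmetric with determinant $f^2 \ne 0$.

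Finally, applying the functor $H^1(X,-)$ to the sheaf isomorphism $\Theta_X \xrightarrow{\sim} \Omega^1_X$ gives the desired isomorphism $H^1(X,\Theta_X) \xrightarrow{\sim} H^1(X,\Omega^1_X) = H^{1,1}(X)$, and the contraction-induced map in \eqref{E950} is its composite with the identification $H^1(X,\Theta_X) \otimes H^{2,0}(X) \cong H^1(X,\Theta_X)$ determined by $\omega$. I do not expect a genuine obstacle: the entire content is the nondegeneracy of interior multiplication by a nowhere-vanishing top form, which is exactly where the defining property of a $K3$ surface enters. The dimension bookkeeping ($h^{2,0}=1$, $h^1(\Theta_X)=h^{1,1}=20$) is then automatically consistent, and in fact the sheaf isomorphism reproves $h^1(\Theta_X) = h^{1,1}$ with no separate computation.
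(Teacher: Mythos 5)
Your proof is correct, but it takes a genuinely different route from the paper's. You trivialize $K_X$ by a nowhere-vanishing $\omega$ and upgrade the statement to a \emph{sheaf-level} isomorphism $\iota_\omega:\Theta_X \xrightarrow{\sim} \Omega^1_X$ (checked either by the canonical rank-two identification $E^\vee \cong E \otimes (\wedge^2 E)^{-1}$ or by the local antisymmetric-matrix computation with determinant $f^2 \neq 0$), and then simply apply $H^1(X,-)$. The paper instead stays at the level of cohomology: it dualizes \eqref{E950} to a pairing $H^1(X,\Theta_X)\otimes H^{1,1}(X)^\vee \to H^{2,0}(X)^\vee$, rewrites it via Kodaira--Serre duality as the pairing $H^1(X,\Theta_X)\otimes H^1(X,\Omega^1_X)\to H^2(X,\CO_X)$ induced by $\Theta_X\otimes\Omega^1_X\to\CO_X$, and deduces its nondegeneracy from a commutative diagram whose bottom row, after tensoring with $\omega$, is the Serre duality pairing $H^1(X,\Theta_X)\otimes H^1(X,\Omega^1_X\otimes K_X)\to H^2(X,K_X)$; triviality of $K_X$ enters at the last step to transfer nondegeneracy upward. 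Your argument is more elementary and arguably sharper: the entire content is localized in the pointwise nondegeneracy of interior multiplication by a nowhere-vanishing $2$-form, it avoids duality theorems altogether, and the same reasoning immediately extends to any holomorphic symplectic manifold (contraction with the symplectic form gives $\Theta \cong \Omega^1$). What the paper's duality formulation buys is that it never needs the sheaf map to be an isomorphism, only the cohomological pairing to be perfect --- a shape of argument that survives in settings where one has a good trace pairing but no bundle-level trivialization. Both proofs use the $K3$ hypothesis $K_X=\CO_X$ in exactly one, and essentially the same, place.
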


\begin{proof} It is instructive to provide a simple proof of this fact.
The map \eqref{E950} gives rise to a pairing
\begin{equation}\label{E951}
H^1(X, \Theta_X) \tensor H^{1,1}(X)^\vee \to H^{2,0}(X)^\vee.
\end{equation}
Then \eqref{E950} is an isomorphism if and only if \eqref{E951} is a nondegenerate pairing.
Combining with Kodaira-Serre duality
\begin{equation}\label{E952}
H^{1,1}(X)^\vee = H^{1,1}(X) \text{ and } H^{2,0}(X)^\vee \isom H^{0,2}(X),
\end{equation}
we see that this pairing becomes
\begin{equation}\label{E953}
H^1(X,\Theta_X)\tensor H^1(X,\Omega^1_X) \to H^2(X,\CO_X)
\end{equation}
which is induced by the nature map $\Theta_X\tensor \Omega^1_X \to \CO_X$.
Therefore,
we have the commutative diagram
\begin{equation}\label{E954}
\xymatrix{
H^1(X,\Theta_X)\tensor H^1(X,\Omega^1_X) \ar[r] \ar[d]^{\tensor \omega} &
H^2(X,\CO_X) \ar[d]^{\tensor \omega}\\
H^1(X,\Theta_X)\tensor H^1(X,\Omega^1_X\tensor K_X) \ar[r] & H^2(X,K_X)
}
\end{equation}
for all $\omega\in H^0(X,K_X)$. The bottom row of \eqref{E954} is Serre duality and is hence a nondegenerate
pairing. Then the nondegeneracy of the top row follows easily when $K_X = \CO_X$.
\end{proof}

Note that $H^1(X,\Theta_{X})$ corresponds to all deformations (including
non-algebraic) of $X$. Let $H_{\alg}^1(X,\Theta_{X})$ correspond to
the algebraic deformations. For a general polarized $K3$ surface $(X,L)$,
$H_{\alg}^1(X,\Theta_{X})$ is the subspace $[c_1(L)]^\perp$.

\begin{cor}\label{C28} For a $K3$ surface $X$, the map
\[
H_{\alg}^1(X, \Theta_X) \tensor H^{2,0}(X) \to H_v^{1,1}(X),
\]
is an isomorphism.
\end{cor}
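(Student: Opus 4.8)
The plan is to leverage Proposition \ref{PROP950} together with the self-duality already used in its proof, and to track exactly what the condition $v\in[c_1(L)]^\perp$ does to the cup product. Throughout write $\ell=c_1(L)\in H^{1,1}(X)$, let $Q$ denote the intersection (Poincar\'e) pairing on $H^2(X,\C)$, and let
\[
B:H^1(X,\Theta_X)\tensor H^{1,1}(X)\to H^2(X,\CO_X)
\]
be the perfect pairing \eqref{E953} induced by the contraction $\Theta_X\tensor\Omega^1_X\to\CO_X$ (its nondegeneracy is precisely what Proposition \ref{PROP950} records, via \eqref{E950}--\eqref{E954}). With respect to $B$ the algebraic deformations are cut out by $H^1_{\alg}(X,\Theta_X)=[\ell]^\perp=\{v:B(v,\ell)=0\}$, since $B(v,\ell)\in H^2(X,\CO_X)=H^{0,2}(X)$ is exactly the obstruction to $L$ remaining of type $(1,1)$ along the deformation $v$.

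First I would record the adjunction identity implicit in the commutative diagram \eqref{E954}: for $v\in H^1(X,\Theta_X)$, $\omega\in H^{2,0}(X)$ and $\eta\in H^{1,1}(X)$,
\[
Q(v\cdot\omega,\,\eta)=-\,Q\big(B(v,\eta),\,\omega\big),
\]
where the right-hand side is the perfect Serre-duality pairing $H^{0,2}(X)\tensor H^{2,0}(X)\to\C$. This is the cohomological shadow of the pointwise identity $(\iota_v\omega)\wedge\eta=-\langle v,\eta\rangle\,\omega$ in $\Omega^2_X=K_X$, which holds because $\omega\wedge\eta$ vanishes for degree reasons on a surface; it is exactly what the commutativity of \eqref{E954} encodes upon tensoring the contraction \eqref{E953} with $\omega$ to reach the Serre-duality row.

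Next I would show that the image lands in the variable part. For $v\in H^1_{\alg}(X,\Theta_X)$ and any $\omega$, the identity gives $Q(v\cdot\omega,\ell)=-Q(B(v,\ell),\omega)=0$, so $v\cdot\omega$ is $Q$-orthogonal to $\ell$, that is $v\cdot\omega\in H_v^{1,1}(X)$. Hence the cup-product isomorphism of Proposition \ref{PROP950} restricts to a map $H^1_{\alg}(X,\Theta_X)\tensor H^{2,0}(X)\to H_v^{1,1}(X)$.

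Finally I would conclude by a dimension count. Since $B$ is nondegenerate and $\ell\neq0$, the functional $v\mapsto B(v,\ell)$ onto the line $H^{0,2}(X)$ is nonzero, hence surjective, so $H^1_{\alg}=[\ell]^\perp$ has codimension one; as $\dim H^1(X,\Theta_X)=\dim H^{1,1}(X)=20$ by Proposition \ref{PROP950} and $h^{2,0}=1$, the source is $19$-dimensional, while for a general polarized $K3$ of Picard rank one $\dim H_v^{1,1}(X)=20-1=19$ as well. The restricted map is injective, being the restriction of the isomorphism \eqref{E950}, so its $19$-dimensional image fills the $19$-dimensional target $H_v^{1,1}(X)$, and it is therefore an isomorphism. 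The one step that needs genuine care --- and the one I expect to be the main obstacle --- is the compatibility identity above: correctly matching the cup-product map \eqref{E950} with the contraction pairing \eqref{E953} through diagram \eqref{E954} (sign included), so that ``$[c_1(L)]^\perp$ in $H^1(X,\Theta_X)$'' is measured by the \emph{same} pairing $B$ that governs the algebraicity of $L$. Once this is pinned down the remainder is formal.
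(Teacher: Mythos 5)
Your proposal is correct and follows essentially the same route the paper intends: Corollary \ref{C28} is deduced directly from the isomorphism of Proposition \ref{PROP950} by restricting to $H^1_{\alg}(X,\Theta_X)=[c_1(L)]^\perp$ and noting that, under the contraction pairing, this subspace is carried into $H_v^{1,1}(X)=c_1(L)^\perp\cap H^{1,1}(X)$, whence injectivity plus the dimension count ($19=19$, using $h^{2,0}=1$) forces an isomorphism. Your adjunction identity $Q(v\cdot\omega,\eta)=-Q\bigl(B(v,\eta),\omega\bigr)$ makes explicit the compatibility the paper leaves implicit, but it is the same argument, not a different one.
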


Suppose that the family $\pi: \X\to S$ is maximum, i.e., the image of the 
Kodaira-Spencer map $\kappa$ is
$[c_1(L)]^\perp$ at each point $t\in S$. Then by \coref{C28}, the projections of $\nabla_u \omega$ to
$\CO_S\otimes R^2 \pi_* \Omega_{\X/S}$, together
with $\w$, generate the subbundle $[c_1(L)]^\perp \cap \CO_S\otimes F^1R^2 \pi_* \C$ as $u$ varies in $H^0(\Theta_S)$. 
By (\ref{E007}), this
cannot happen if the reduced regulator $\ul{\cl}_{2,1}(\xi)$ is non-trivial, which was proven
in \cite{C-L1}. Finally,  we use the fact that $T_0(S) \simeq H^1_{\alg}(X,\Theta_X)$ together
with \cite{C-L1} to deduce
the latter statement in Theorem \ref{T27}. Corollary \ref{CO78} follows accordingly.

\begin{proof} (of Theorem \ref{T28}) Let us assume that $\Phi_{r,m}$ is zero. That means
that $\cl_{r,m}(\xi)$ is a period with respect to (acting on forms in) 
$F^{d-r+m+1}H_v^{2d-2r+m+1}(X,\C)$.
Then from the surjection of
\[
H^1_{\alg}(X,\Theta_X)\otimes H_v^{d-r+m+1-\ell,d-r+\ell}(X) \to H_v^{d-r+m-\ell,d-r+\ell+1}(X),
\]
in the case $\ell=0$, we deduce likewise that $\cl_{r,m}(\xi)$ is a period 
with respect to $F^{d-r+m}H_v^{2d-2r+m+1}(X,\C)$.
By iterating the same argument for $\ell = 1,...,m-1$, we deduce that
$\cl_{r,m}(\xi)$ is a period with respect to $F^{d-r+1}H_v^{2d-2r+m+1}(X,\C)$,
which implies that  $\ul{\cl}_{r,m}(\xi) = 0$.
\end{proof}

\section{Derivatives of normal functions II}\label{SEC022}

Consider the setting  in \S \ref{SEC01}, where $\lambda : \X \to \Ss$ is a smooth and 
proper map of smooth quasi-projective varieties, and where $X$ is a very general member. 
In this section, we will further assume that $\Ss$ is affine.
Associated to the Gauss-Manin connection $\nabla$ and the algebraic vector fields
$H^0(\Ss,\Theta_{\Ss})$ is a $D$-module of differential operators.  
If $\w\in   H^0(\Ss,\CO_{\Ss}\otimes R^i\lambda_{\ast}\C)  =
H^0(\Ss,\R^i\lambda_{\ast}\Omega^{\bullet}_{\X/\Ss})$ is an algebraic form, 
one can consider the ideal of partial differential operators with coefficients
in $\C(\Ss)$ annihilating $\w$, which will always be non-zero 
using the finite dimensionality of cohomology of the fibers of $\lambda$ and the fact that $\nabla$ is algebraic.
This section addresses  with the following question. 

\begin{q} { If the transcendental regulator
associated to $\Phi_{r,m}(\xi)$ is non-trivial, is the associated normal function $\nu$
associated to $\xi$ detectable by  a Picard-Fuchs operator $P\in I_{\w}$, for some
$\w \in F^{d-r+m+1}H_v^{2d-2r+m+1}(X,\C)$; namely is $P\langle \nu,\w\rangle \ne 0$?}
\end{q}

The answer is a definitive yes in the setting of Theorem \ref{T28}, provided Assumption
\ref{A1} (below) holds.
Again, the answer to this question is strongest  (unconditional yes) in the case
of families of $K3$ surfaces considered in this paper, including product variants such
as in Corollary \ref{C27} and Theorem \ref{T53}.  
We need the following mild assumption:
 \begin{assum}\label{A1}
 For a fixed choice of $r$ and $m$ above,
 \[
 \big\{R_v^{2r-m-1}\lambda_{\ast}\C\big\} \bigcap
 \big\{ \CO_{\Ss}\otimes F^rR_v^{2r-m-1}\lambda_{\ast}\C\big\}
 = 0.
 \]
 \end{assum}
 As one would expect, this assumption automatically holds in the situation of Theorem \ref{T27},
 as well as for the situation of families of $K3$ surfaces in this paper, as well
 as in Corollary \ref{C27} and Theorem \ref{T53}.

\subsection{Picard-Fuchs equations associated to regulators}

Much of the ideas in this section are inspired by  \cite{Gr}. Since (again) $\nabla$ is algebraic,
everything reduces to a local  calculation over a polydisk $S\subset \Ss$, in the analytic topology.
Recall that $\Theta_S$ is the holomorphic tangent bundle of $S$. We can think of $\Theta_S$ as the sheaf of holomorphic
linear differential operators which naturally carries a ring structure $\D_S$. That is, it is given by
\begin{equation}\label{E906}
\D_S = \CO_S\left[
\frac{\partial}{\partial z_1},
\frac{\partial}{\partial z_2},
...,
\frac{\partial}{\partial z_n}\right]
\end{equation}
where $\CO_S = \C[[z_1, z_2, ..., z_n]]$. By identifying $\partial/\partial z_k$ with
$\nabla_{\partial/\partial z_k}$, $\D_S$ acts on $\CO_S\tensor R^q\pi_* \C$ via
\begin{equation}\label{E907}
(v_1 v_2 ... v_l) \omega = \nabla_{v_1} \nabla_{v_2} ...\nabla_{v_l} \omega
\end{equation}
for $v_1, v_2, ..., v_l\in H^0(\Theta_S)$, where we write $H^0(-)$ for $H^0(S,-)$.
For $\omega\in H^0(\CO_S\otimes R^q \pi_*\C)$, we let
$I_\omega$ be the {\it Picard-Fuchs ideal\/} annihilating $\omega$, i.e.,
the left-side ideal consisting of differential operators $P\in H^0(\D_S)$
satisfying $P\omega = 0$.

\medskip
As in \S \ref{SEC02} let us again for simplicity restrict to the situation of a family of $K3$ surfaces.
We fix a nonzero section $\omega\in H^0(K_{\X/S})$.
For all $u\in H^0(\Theta_S)$ and all Picard-Fuchs operators
$P\in H^0(\D_S)$ such that
$P (\nabla_u \omega) = 0$, i.e., $P\in I_{\nabla_u \omega}$, it is obvious that
\begin{equation}\label{E902}
(Pu) \omega = 0
\end{equation}
and hence
\begin{equation}\label{E901}
(Pu) \langle \gamma, \omega \rangle = 0
\end{equation}
for all $\gamma\in H^2(X, \C)$ (where  $H^2(X, \C)$ is identified with $ H^0(S,R^2\pi_{\ast}\C)$,
using $S$ a polydisk).
Again let $\xi\in \CH^2(\X/S, 1)$ be the be the result of an algebraic deformation 
of a cycle in the central fiber $X$ restricted to $\X/S$, and $\cl_{2,1}$ be the regulator map
\begin{equation}\label{E9033}
\cl_{2,1}: \CH^2(\X/S, 1) \to H^0\biggl(\frac{\CO_S\tensor R^2 \pi_* \C}{\CO_S\tensor F^2
R^2\pi_* \C + R^2\pi_* \Q(2)}\biggr).
\end{equation}
We let $\nu$ be a lift of $\cl_{2,1}(\xi)$ to $H^0(\CO_S\tensor R^2\pi_* \C)$.

For $P\in I_{\nabla_u \omega}$, $Pu\in I_\omega$ ``kills'' all the periods
$\langle \gamma, \omega \rangle$ for $\gamma\in H^2(X, \Q(2))$. Therefore,
$(Pu) \langle \nu, \omega \rangle$ is independent of the choice of the lifting of $\cl_{2,1}(\xi)$.
Obviously, we have
\begin{equation}\label{E800}
Pu\langle \nu, \omega\rangle = P\left(\langle \nabla_u \nu, \omega \rangle +
\langle \nu, \nabla_u \omega\rangle \right).
\end{equation}

Since  $\langle \nabla_u \nu, \omega\rangle = 0$,
 \eqref{E800} becomes
\begin{equation}\label{E801}
P \left(u\langle \nu, \omega\rangle - \langle \nu, \nabla_u \omega\rangle\right) = 0
\end{equation}
which is a system of differential equations satisfied by $\cl_{2,1}(\xi)$. We put this into the following
proposition.

\begin{prop}\label{P001}
Let $\X/S$, $\nu$ and $\omega$ be given as above. Then \eqref{E801} holds for all
$u\in H^0(\Theta_S)$ and $P\in I_{\nabla_u \omega}$. Or equivalently,
\begin{equation}\label{E8021}
u\langle \nu, \omega\rangle - \langle \nu, \nabla_u \omega\rangle = 
\langle \gamma, \nabla_u \omega \rangle
\end{equation}
for some $\gamma\in H^2(X, \C) = H^0(S,R^2\pi_*\C)$.
\end{prop}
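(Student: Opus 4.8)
The plan is to separate the two assertions, since the first is essentially already in hand. For $P\in I_{\nabla_u\omega}$ one has $Pu\in I_\omega$, so $Pu$ kills every period $\langle\gamma,\omega\rangle$ with $\gamma\in H^2(X,\Q(2))$; combined with \eqref{E800} and the horizontality relation $\langle\nabla_u\nu,\omega\rangle=0$ from \S\ref{SEC02}, this yields $Pu\langle\nu,\omega\rangle=P\langle\nu,\nabla_u\omega\rangle$, which is exactly \eqref{E801}. The real content is therefore the equivalence with \eqref{E8021}. To isolate it, fix $u$ and write $\eta:=\nabla_u\omega\in H^0(\CO_S\otimes R^2\pi_*\C)$ and $g:=u\langle\nu,\omega\rangle-\langle\nu,\nabla_u\omega\rangle$, a holomorphic function on $S$. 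Then \eqref{E801} says precisely that $g$ lies in the solution space
\[
\mathrm{Sol}(I_\eta):=\{f : Pf=0\ \text{for all}\ P\in I_\eta\},\qquad I_\eta=I_{\nabla_u\omega},
\]
while \eqref{E8021} says $g=\langle\gamma,\eta\rangle$ for some flat $\gamma\in H^2(X,\C)=H^0(S,R^2\pi_*\C)$. So the equivalence reduces to the identity $\mathrm{Sol}(I_\eta)=\{\langle\gamma,\eta\rangle:\gamma\in H^2(X,\C)\}$.

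The inclusion $\supseteq$ is immediate and also gives \eqref{E8021}$\Rightarrow$\eqref{E801} directly: since $\gamma$ is flat and the cup-product pairing is $\nabla$-compatible, $P\langle\gamma,\eta\rangle=\langle\gamma,P\eta\rangle=0$ for every $P\in I_\eta$. For the reverse inclusion I would pass to flat bundles over the simply connected $S$. The cyclic $\D_S$-module $M:=\D_S\eta$ is $\nabla$-stable by construction and sits inside the cohomology bundle $\CO_S\otimes R^2\pi_*\C$; where it has locally constant rank it is a flat subbundle with $M\isom\D_S/I_\eta$ carrying the generator to $\eta$, so that $\mathrm{Sol}(I_\eta)=\mathrm{Hom}_{\D_S}(M,\CO_S)=\Gamma^{\nabla}(M^{\vee})$. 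Restriction of functionals along the inclusion $M\hookrightarrow\CO_S\otimes R^2\pi_*\C$ is a fiberwise-surjective morphism of flat bundles, and the nondegeneracy of the pairing rewrites it as a surjection $\CO_S\otimes R^2\pi_*\C\twoheadrightarrow M^{\vee}$ under which $\gamma\mapsto\langle\gamma,\,\cdot\,\rangle|_M$. Because $S$ is simply connected, a surjection of flat bundles stays surjective on flat sections, whence $H^2(X,\C)=\Gamma^{\nabla}(\CO_S\otimes R^2\pi_*\C)\to\Gamma^{\nabla}(M^{\vee})=\mathrm{Sol}(I_\eta)$ is onto; evaluating the resulting flat functional at the generator $\eta$ identifies the image with $\langle\gamma,\eta\rangle$, giving $\subseteq$ and hence \eqref{E801}$\Rightarrow$\eqref{E8021}.

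The step I expect to be the main obstacle is the rank behaviour of the coherent subsheaf $M=\D_S\eta$: the clean flat-subbundle description, and the finiteness of $\mathrm{Sol}(I_\eta)$ in the expected rank (guaranteed in the abstract by the finite dimensionality of the fiber cohomology and the regularity of $\nabla$ noted at the start of \S\ref{SEC022}), hold only on the open locus where $\eta,\nabla\eta,\nabla^2\eta,\dots$ achieve maximal $\CO_S$-rank. I would therefore run the surjectivity argument after shrinking $S$ about a general point, produce there a single constant $\gamma\in H^2(X,\C)$ with $g=\langle\gamma,\eta\rangle$, and extend this identity to the whole polydisk by analytic continuation, both sides being holomorphic. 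It is worth recording that Assumption \ref{A1} plays no role in Proposition \ref{P001} itself; it enters only afterwards, when one upgrades the mere detectability of $\nu$ to the nontriviality of the transcendental regulator.
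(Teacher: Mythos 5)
Your proposal is correct and follows essentially the same route as the paper: the derivation of \eqref{E801} via \eqref{E800}, the membership $Pu\in I_\omega$, and the horizontality relation $\langle\nabla_u\nu,\omega\rangle=0$ is exactly the paper's argument, and your reduction of the equivalence with \eqref{E8021} to the identity $\mathrm{Sol}(I_{\nabla_u\omega})=\{\langle\gamma,\nabla_u\omega\rangle:\nabla\gamma=0\}$ is precisely the paper's Lemma \ref{LEM001}. The only divergence is that you actually prove that lemma (via the cyclic $\D_S$-module $M=\D_S\eta$, flatness of the pairing, and surjectivity on flat sections over the simply connected $S$, with the rank-jumping of $M$ handled by shrinking and analytic continuation), whereas the paper merely cites \cite{Gr}(1.28) and calls it elementary; your closing remark that Assumption \ref{A1} plays no role in the proposition itself, entering only in \S\ref{SSNPFO}, likewise agrees with the paper.
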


Here we need to say something about \eqref{E8021}. 
Namely, we want to say that the solutions of $P y = 0$ for $P\in I_{\nabla_u\omega}$ are generated
by $\langle \gamma, \nabla_u \omega \rangle$ for all $\nabla \gamma = 0$.
Roughly, it follows from \cite{Gr}(1.28).
It is actually more elementary than that as a consequence of the following observation,
which is a generalization of the fact that a function with
vanishing derivative is constant.

\begin{lemma}\label{LEM001}
Let $E$ be a flat holomorphic vector bundle over the polydisk $S$ with flat connection
$\nabla$ and let $I_\eta$ be the Picard-Fuchs ideal associated to an $\eta\in H^0(E)$
defined as above. Then the solutions of the system of differential equations
$Py = 0$ for $P\in I_\eta$ are generated as a vector space over $\C$ by
$\langle \gamma, \eta \rangle$ for all $\gamma\in E^\vee$ with $\nabla \gamma = 0$,
where $E^\vee$ is the dual of $E$.
\end{lemma}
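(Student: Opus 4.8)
The plan is to trivialize everything by a flat frame and reduce the statement to a single transparent claim about the solution space. Since $S$ is a polydisk and $\nabla$ is flat, the Poincar\'e lemma (Frobenius integrability) provides a global flat frame $e_1,\dots,e_n$ of $E$, with dual flat frame $e_1^\vee,\dots,e_n^\vee$ of $E^\vee$; the flat sections of $E^\vee$ are exactly the constant combinations $\sum_i c_i e_i^\vee$, $c_i\in\C$. Writing $\eta=\sum_i f_i e_i$ with $f_i\in\CO_S$, the relation $\nabla e_i=0$ gives $P\eta=\sum_i (Pf_i)e_i$ for every $P\in H^0(\D_S)$, so that $P\in I_\eta$ if and only if $Pf_i=0$ for all $i$. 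Consequently $\langle\gamma,\eta\rangle=\sum_i c_i f_i$ for $\gamma=\sum_i c_i e_i^\vee$, and the functions the lemma proposes as generators are precisely $W:=\Span_\C\{f_1,\dots,f_n\}$. The easy inclusion of $W$ into $\{y:Py=0,\ \forall P\in I_\eta\}$ is immediate, since each $f_i$ is annihilated by every $P\in I_\eta$. The whole content is therefore the reverse inclusion: a holomorphic solution of the system has no choice but to lie in $W$.

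To capture this, I would pass to the cyclic $\D_S$-module $N:=\D_S\eta\subseteq E$, so that $P\mapsto P\eta$ identifies $\D_S/I_\eta\isom N$ and hence identifies the solution space with $\operatorname{Hom}_{\D_S}(N,\CO_S)$ via $\phi\mapsto\phi(\eta)$. Because $N$ is $\nabla$-stable (each $\nabla_u$ sends a derivative of $\eta$ to another one), it is a coherent $\CO_S$-submodule of $E$ carrying an integrable connection. The crux is the standard fact --- the precise incarnation of ``a function with vanishing derivative is constant'' that the statement alludes to --- that a coherent sheaf with integrable connection on a smooth base is locally free. Granting this, on the polydisk $N$ acquires a flat frame; but a flat section of $N$ is a flat section of $E$, hence a constant vector in $\Span_\C\{e_i\}$, so $N$ is spanned over $\CO_S$ by finitely many constant vectors $g_1,\dots,g_\rho$ and is therefore a flat subbundle (a direct summand) of $E$.

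With $N$ a flat subbundle, the computation closes formally. The restriction $E^\vee\to N^\vee$ is surjective on flat sections (the monodromy being trivial on the simply connected $S$, flat sections are identified with the fibers, and $E_0^\vee\to N_0^\vee$ is onto), so every flat $\gamma'\in N^\vee$ is the restriction of a flat $\gamma\in E^\vee$ with $\langle\gamma,\eta\rangle=\langle\gamma',\eta\rangle$. Dually, $\operatorname{Hom}_{\D_S}(N,\CO_S)$ is exactly the space of flat sections of $N^\vee$, and under $\phi\mapsto\phi(\eta)$ this space maps onto $\{\langle\gamma,\eta\rangle:\nabla\gamma=0\}=W$. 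Tracing the identifications, the solution space equals $W$, which is the assertion.

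The main obstacle is this reverse inclusion, concentrated entirely in the local freeness of $(N,\nabla)$: a priori $N=\D_S\eta$ could fail to be saturated in $E$, and one must rule out ``spurious'' solutions arising from such a drop in rank. I expect this to be the only nontrivial point; the cleanest route is the connection-theoretic one above (an integrable connection forces local freeness, hence a flat frame by Frobenius), rather than a several-variable Wronskian estimate, which would work but is messier to make rigorous across the whole polydisk. Everything else is bookkeeping with the flat frame and the pairing $\langle\cdot,\cdot\rangle$.
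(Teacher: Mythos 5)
Your argument is correct, and the first thing to say is that the paper does not actually supply a proof of this lemma: it is stated as an observation, prefaced only by the remark that it ``roughly\ldots follows from {\cite{Gr}}(1.28)'' and that it generalizes the fact that a function with vanishing derivative is constant. So you are not paralleling or diverging from an argument in the text --- you are supplying the argument the paper outsources to Griffiths. Your route is the standard $\D$-module one, and it is sound: trivialize by a flat frame (legitimate on the polydisk, where flatness of $\nabla$ and simple connectivity give a global flat frame and identify flat sections of $E^\vee$ with constant covectors); observe that $P\in I_\eta$ iff $Pf_i=0$ for all $i$, so the proposed generators span $W=\Span_\C\{f_1,\dots,f_n\}$ and the inclusion $W\subseteq\{y: I_\eta y=0\}$ is trivial; then obtain the reverse inclusion by identifying the solution space with $\operatorname{Hom}_{\D_S}(N,\CO_S)$ for the cyclic module $N=\D_S\eta\isom\D_S/I_\eta$, recognizing $\operatorname{Hom}_{\D_S}(N,\CO_S)$ as the flat sections of $N^\vee$, and using the classical fact that a coherent $\CO$-module with integrable connection is locally free to conclude that $N$ is a flat subbundle spanned by constant vectors, whence $E^\vee\to N^\vee$ is surjective on flat sections and every solution $\phi(\eta)$ is of the form $\langle\gamma,\eta\rangle$ with $\nabla\gamma=0$. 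You also correctly diagnose that the entire content of the lemma sits in this local-freeness/saturation step --- this is precisely what makes the lemma ``a generalization of the fact that a function with vanishing derivative is constant,'' since for $N$ of rank zero it literally reduces to that statement. Compared with the paper's citation, your proof buys self-containedness and works verbatim for any flat bundle of any rank.

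One step deserves more care than you give it: the coherence of $N=\D_S\eta$. In the paper's own convention $\CO_S=\C[[z_1,\dots,z_n]]$, the ring is Noetherian and $N\subseteq\CO_S^{\oplus n}$ is automatically finitely generated, so there is no issue. In the honest analytic category on a polydisk, however, $N$ is an increasing union of the coherent subsheaves $N_k$ generated by derivatives of $\eta$ of order $\leq k$, and such a union need not be coherent without a stabilization argument; you should either (i) work at the level of germs at a point, where stalkwise Noetherianity gives stabilization and local freeness, and then globalize --- which is painless in your setup because the flat frame of $N$ consists of \emph{constant} vectors, so the relation expressing $\eta$ in that frame propagates over all of $S$ by the identity theorem --- or (ii) use that the polydisk is Stein, so by Cartan's Theorem A the global operators in $I_\eta$ generate the stalks of the annihilator sheaf (each $\mathcal{I}_\eta\cap\D_S^{\leq k}$ being a coherent kernel), and hence a global solution of the global system is killed stalkwise by the full annihilator, reducing to the germ-level statement. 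This repair does not alter the structure of your proof; it only makes the word ``coherent'' honest, and with it your argument is complete.
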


\subsection{Nontriviality of Picard-Fuchs operators} \label{SSNPFO}

Suppose that any Picard-Fuchs operator in $I_{\w}$ annihilates $\cl_{2,1}(\xi)(\omega)$.
Then $\langle \nu, \omega \rangle  = \langle \gamma, \omega \rangle$
for some $\gamma\in H^2(X, \C)$. It follows that
\begin{equation}\label{E904}
(Pu) \langle \nu, \omega \rangle = 0
\end{equation}
for all $u\in H^0(\Theta_S)$ and $P\in I_{\nabla_u \omega}$.
By \propref{P001}, we have
\begin{equation}\label{E905}
(Pu) \langle \nu, \omega \rangle = P\langle \nu, \nabla_u \omega \rangle = 0
\end{equation}
for all $u\in H^0(\Theta_S)$ and $P\in I_{\nabla_u \omega}$ and
\begin{equation}\label{E908}
\langle \nu, \nabla_u \omega \rangle = \langle \gamma, \nabla_u \omega \rangle
\end{equation}
for some $\gamma\in H^2(X, \C)$.
Equivalently, on $H^0(\CO_S\tensor R^2\pi_*\C)$ and again after identifying
$H^2(X,\C)$ with  $H^0(S,R^2\pi_*\C)$ (recall again $S$ is a polydisk), we have
\begin{equation}\label{E909}
\nu \in [\nabla_u \omega ]^\perp + H^2(X, \C).
\end{equation}
That is, $\cl_{2,1}(\xi)$ lies in the image of $[\nabla_u \omega ]^\perp + H^2(X,\C)$, which we simply
write as
\begin{equation}\label{E911}
\cl_{2,1}(\xi) \in [\nabla_u \omega ]^\perp + H^2(X, \C).
\end{equation}

Assume for the moment that the family $\pi: \X\to S$ is \emph{maximal}, i.e., the image of the Kodaira-Spencer map $\kappa$ is
$[c_1(L)]^\perp$ at each point $t\in S$. Then by \coref{C28}, the projections of $\nabla_u \omega$ to
$\CO_S\otimes R^1 \pi_* \Omega_{\X/S}$ generate the subbundle $[c_1(L)]^\perp$ as $u$ varies in $H^0(\Theta_S)$.
And since
\begin{equation}\label{E955}
\cl_{2,1}(\xi)\in \bigcap_{u\in H^0(\Theta_S)} ([\nabla_u \omega ]^\perp + H^2(X,\C)) \cap
([\omega]^\perp + H^2(X, \C))
\end{equation}
and $c_1(L)\in H^2(X,\C)$, we see that
\begin{equation}\label{E956}
\cl_{2,1}(\xi)\in H^2(X, \C),
\end{equation}
i.e. the normal function has zero infinitesimal invariant, and hence zero
topological invariant by \cite{Sa}.
Let us explain this more precisely. 
If we consider for the moment the general setting in \S\ref{SEC01} of a smooth and proper morphism
$\lambda : \X \to \Ss$ of smooth quasi-projective varieties, where say $\Ss$
is affine, with space of
normal functions $\Ext^1_{V\MHS}(\Q(0),R^{2r-m-1}\lambda_{\ast}\Q(r))$, then there
is a short exact sequence:
\[
0\to J\big(H^{2r-m-1}_f(X,\Q(r))\big) \to \Ext^1_{V\MHS}(\Q(0),R^{2r-m-1}\lambda_{\ast}\Q(r))
\]
\[
\xrightarrow{\delta} \hom_{\MHS}\big(\Q(0),H^1(\Ss,R^{2r-m-1}\lambda_{\ast}\Q(r))\big)
\to 0,
\]
which induces an injection
\[
\Ext^1_{V\MHS}(\Q(0),R_v^{2r-m-1}\lambda_{\ast}\Q(r)) \hookrightarrow
\hom_{\MHS}\big(\Q(0),H^1(\Ss,R_v^{2r-m-1}\lambda_{\ast}\Q(r))\big),
\]
together with an injection (using $\Ss$ affine, see \cite{Sa})
\[
\hom_{\MHS}\big(\Q(0),H^1(\Ss,R_v^{2r-m-1}\lambda_{\ast}\Q(r))\big)\hookrightarrow
\nabla\Gamma J,
\]
where
\[
\nabla\Gamma J :=
\frac{\ker \nabla : H^0(\Ss,\Omega^1_{\Ss}\otimes F^{r-1}R_v^{2r-m-1}\lambda_{\ast}\C)
\to H^0(\Ss,\Omega^2_{\Ss}\otimes F^{r-2}R_v^{2r-m-1}\lambda_{\ast}\C)}{\nabla
H^0(\Ss,\CO_{\Ss}\otimes F^rR_v^{2r-m-1}\lambda_{\ast}\C)}.
\]
For $\nu \in \Ext^1_{V\MHS}(\Q(0),R^{2r-m-1}\lambda_{\ast}\Q(r))$,
$\delta \nu$ gives the topological invariant of $\nu$. Next, consider the sheaf
\[
\nabla J := \frac{\ker \nabla : \Omega^1_{\Ss}\otimes F^{r-1}R_v^{2r-m-1}\lambda_{\ast}\C
\to \Omega^2_{\Ss}\otimes F^{r-2}R_v^{2r-m-1}\lambda_{\ast}\C}{\nabla
\big(\CO_{\Ss}\otimes F^rR_v^{2r-m-1}\lambda_{\ast}\C\big)},
\]
with corresponding $\Gamma \nabla J := H^0(\Ss,\nabla J)$. By definition
of a normal function, one has the Griffiths infinitesimal invariant
$\delta_G\nu \in \Gamma \nabla J$. Under Assumption \ref{A1}, the natural
map $\nabla\Gamma J \to \Gamma \nabla J$ is an isomorphism.  
Indeed, by Assumption \ref{A1}, this follows from the short exact sequence:
\[
0\to \CO_{\Ss}\otimes F^rR_v^{2r-m-1}\lambda_{\ast}\C \xrightarrow{\nabla}
\big(\Omega^1_{\Ss}\otimes F^{r-1}R_v^{2r-m-1}\lambda_{\ast}\C\big)_{\ker \nabla}
\to \nabla J \to 0.
\]
Now
back to the case of our family of $K3$ surfaces,  with $(d,r,m) = (2,2,1)$,
Assumption \ref{A1} automatically holds, and the normal function $\nu$ associated to $\cl_{2,1}(\xi)$,
satisfies $\delta_G\nu = 0$, hence $\delta\nu = 0$.
This  implies that the
normal function arises from the fixed part $J\big(H_f^2(X,\Q(2))\big)$. This
cannot happen since  the reduced regulator $\ul{\cl}_{2,1}(\xi)$ is non-trivial.
Finally,  we use the fact that $T_0(S) \simeq H^1_{\alg}(X,\Theta_X)$ together
with \cite{C-L1} to deduce the non-trivially of the Picard-Fuchs operator acting
on a normal function arising from the general subfamilies in
the latter statement in Theorem \ref{T27}. A similar story holds
in the setting of Corollary \ref{CO78}, (and as will
be clearer later, as well as in Corollary \ref{C27} and Theorem \ref{T53}).  
Quite generally, in the setting of Theorem \ref{T28},
let us assume that any Picard-Fuchs operator applied to $\Phi_{r,m}$ is zero.
Then from the surjection of
\[
H^1_{\alg}(X,\Theta_X)\otimes H_v^{d-r+m+1-\ell,d-r+\ell}(X) \to H_v^{d-r+m-\ell,d-r+\ell+1}(X),
\]
in the case $\ell=0$, we deduce as in (\ref{E955}) that
\[
\cl_{r,m}(\xi)\in \big[H^0\big(\CO_S\otimes F^{d-r+m}R^{2d-2r+m+1}\pi_{\ast}\C)\big)\big]^{\perp}
+ H^{2d-2r+m+1}(X,\C).
\]
By iterating the same argument for $\ell = 1,...,m-1$, we deduce that
\[
\cl_{r,m}(\xi)\in \big[H^0\big(\CO_S\otimes F^{d-r+1}R^{2d-2r+m+1}\pi_{\ast}\C)\big)\big]^{\perp}
+ H^{2d-2r+m+1}(X,\C),
\]
which implies that the associated normal function has zero infinitesimal invariant,
and thus $\ul{\cl}_{r,m}(\xi) = 0$,  which is not the case.

\section{Proof of Theorem \ref{T53}}\label{SEC03}

In this section we restrict to the case
where $X$ is a projective $K3$ surface. We recall the real regulator
\begin{equation}\label{E100}
r_{3,1}:  \CH^3(X\times X, 1) \xrightarrow{}
H^{2,2}(X\times X, \R(2)).
\end{equation}

The image of $r_{3,1}$ thus contains
\begin{equation}\label{E002}
r_{3,1}(\CH^1(X)\tensor \CH^2(X,1))\tensor \R = H^{1,1}(X, \Q(1)) \tensor H^{1,1}(X, \R(1))
\end{equation}
for $X$ general and it also contains the class $[\Delta_X]$ of the diagonal. So it is natural to look at
the reduced real regulator
\begin{equation}\label{E001}
\ul{r}_{3,1}: \CH^3(X\times X, 1) \xrightarrow{r_{3,1}} H^{2,2}(X\times X, \R)
\xrightarrow{\text{projection}} V_X
\end{equation}
where
\begin{equation}\label{E003}
\begin{split}
V_X &= H^{2,2}(X\times X,\R)\cap 
(H^{1,1}(X, \Q(1))\tensor H^{1,1}(X,\R(1)))^\perp\\
&\quad \cap (H^{1,1}(X,\R(1))\tensor H^{1,1}(X,\Q(1)))^\perp \cap [\Delta_X]^\perp.
\end{split}
\end{equation}
It was proven in \cite{C-L2} that
\begin{equation}\label{E334}
{\rm Im}(\underline{r}_{3,1})\tensor \R \ne 0.
\end{equation}
Of course, this implies that the indecomposables
\begin{equation}\label{E008}
\CH^3_{\text{ind}}(X\times X, 1)\tensor \Q \ne 0
\end{equation}
for a general projective $K3$ surface $X$ \cite[Corollary 1.3]{C-L2}.

\bigskip

Now let us look at the transcendental part of $\cl_{3,1}$:
\begin{equation}\label{E009}
 \Phi_{3,1} : \CH^3(X\times X,1) \to \frac{\big\{F^3\big(H_v^2(X,\C)\otimes H^2_v(X,\C)\big)
 \big\}^{\vee}}{H_4(X\times X,\Q(1))},
\end{equation}
where now $X$ is a very general $K3$ and $H_v^2(X,\C)$ is transcendental
cohomology.
Although one can follow the same argument in \cite{C-L2} to prove that $\Phi_{3,1}$ is
non-trivial by a degeneration argument,
there is an easier way to derive this from Theorem \ref{T28}.
The proof of Corollary \ref{C27} is a stepping stone to the proof of the
stronger Theorem \ref{T53}.

\subsection{Non-triviality of $\Phi_{3,1}$}
It is instructive to explain precisely how Theorem \ref{T28} leads Corollary \ref{C27}, viz.,
to the non-triviality of $\Phi_{3,1}$ for $Y := X\times X$, where $X$ is a very general 
projective $K3$ surface. In this case $Y$ takes the role of $X$ in Theorem \ref{T27},
with $(d,r,m,\ell) = (4,3,1,0)$, $H^1_{\alg}(Y,\Theta_Y)$ will be identified with $H_{\alg}^1(X,\Theta_X)
\simeq \C^{19}$, and $H_v^{2d-2r+m+1}(Y,\Q) = H^4_v(Y,\Q)$ will be replaced by
\[
 [\Delta_X]^{\perp}\cap \big\{H_v^2(X,\Q)\otimes H^2_v(X,\Q)\big\},
\]
where $[\Delta_X]$ is the diagonal class. The pairing in Theorem \ref{T28} amounts to
studying the properties of the pairing
\[
H^1(\Theta_X) \otimes H^{3,1}(X\times X) \to H^{2,2}(X\times X),
\]
which amounts to a Gauss-Manin  derivative calculation.
So let $\X/S$ be a smooth projective family of $K3$ surfaces over a polydisk
$S$ (arising from a universal family), 
$\Y = \X\times_S\X$, $X = \X_0$ be a very general fiber of $\X/S$,
$Y = X\times X$ and $\pi_\X$ be the projection $\X\to S$. 
Let $\nabla$ be the GM connection associated to $\X/S$
and let $\alpha\in H^1(\Theta_X)$ be a tangent vector of $S$ at $0$. 
For $\omega\in H^0((\pi_\X)_* \wedge^2 \Omega_{\X/S})$ and $\eta \in H^0(R^1 (\pi_\X)_* \Omega_{\X/S})$, i.e., for
$\omega\in H^{2,0}(X)$ and $\eta\in H^{1,1}(X)$ when restricted to $X$,
we claim that
\begin{equation}\label{E018}
\bigcap_{\alpha, \omega, \eta} \Big((\nabla_\alpha(\omega\tensor \eta))^\perp \cap
(\nabla_\alpha(\eta\tensor \omega))^\perp \Big) \cap [\Delta_X]^\perp = \{ 0 \}
\end{equation}
in $H^{2,2}(Y)$ and hence the condition on the cup product pairing
in Theorem \ref{T28} holds.  Note that
\begin{equation}\label{E019}
[\nabla_\alpha(\omega\tensor \eta)] = [\nabla_\alpha \omega]\tensor \eta
+ \omega \tensor [\nabla_\alpha \eta]
\end{equation}
where $[\nabla_\alpha(\omega\tensor \eta)]$, $[\nabla_\alpha \omega]$ and $[\nabla_\alpha \eta]$ are
the projections of $\nabla_\alpha(\omega\tensor \eta)$, $\nabla_\alpha \omega$ and $\nabla_\alpha \eta$
onto $H^{2,2}(Y)$, $H^{1,1}(X)$ and $H^{0,2}(X)$, respectively. We know that
\begin{equation}\label{E020}
[\nabla_\alpha \omega] = \langle \alpha, \omega\rangle
\text{ and }
[\nabla_\alpha \eta] = \langle \alpha, \eta\rangle
\end{equation}
where $\langle\bullet, \bullet\rangle$ is the pairing
\begin{equation}\label{E021}
H^1(\Theta_X) \tensor (H^{1,1}(X) \oplus H^{2,0}(X)) \xrightarrow{} H^{0,2}(X) \oplus H^{1,1}(X).
\end{equation}
We write $\langle \alpha, \omega\rangle = \delta_\alpha \omega$
and $\langle \alpha, \eta\rangle = \delta_\alpha \eta$. Then \eqref{E018} follows directly from the following
statement.

\begin{prop}\label{PROP001}
For every complex $K3$ surface $X$,
\begin{equation}\label{E022}
\bigcap_{\alpha, \omega, \eta} \Big((\delta_\alpha \omega\tensor \eta + 
\omega \tensor \delta_\alpha \eta)^\perp \cap
(\delta_\alpha\eta\tensor \omega + \eta \tensor \delta_\alpha \omega)^\perp \Big) \cap [\Delta_X]^\perp = \{ 0 \}
\end{equation}
in $H^{2,2}(X\times X, \C)$, where $\alpha\in H^1(\Theta_X)$, $\omega\in H^{2,0}(X)$ and $\eta\in H^{1,1}(X)$.
\end{prop}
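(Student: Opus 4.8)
The plan is to compute the infinitesimal invariant entirely inside the middle Künneth summand, where the reduced cycle classes of interest live. Since $H^1(X)=H^3(X)=0$ for a $K3$ surface, the part of $H^{2,2}(X\times X,\C)$ lying in $H^2(X)\otimes H^2(X)$ decomposes as
\[
\big(H^{2,0}\otimes H^{0,2}\big)\oplus\big(H^{1,1}\otimes H^{1,1}\big)\oplus\big(H^{0,2}\otimes H^{2,0}\big),
\]
and the annihilators in \eqref{E022} are taken against the intersection (cup-product) pairing restricted to this space. The generators $\delta_\alpha\omega\otimes\eta+\omega\otimes\delta_\alpha\eta$ and $\delta_\alpha\eta\otimes\omega+\eta\otimes\delta_\alpha\omega$ already lie in $H^2(X)\otimes H^2(X)$, and the classes that \eqref{E022} is designed to control lie in $H^2_v(X)\otimes H^2_v(X)\subseteq H^2(X)\otimes H^2(X)$, so I carry out the argument there (the outer summands $H^4\otimes H^0$, $H^0\otimes H^4$ play no role). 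Writing $Q$ for the intersection form on $H^2(X)$, I would fix $\omega\in H^{2,0}$ and $\overline{\omega}\in H^{0,2}$ with $Q(\omega,\overline{\omega})=1$, so that $Q$ restricts nondegenerately to $H^{1,1}$ and pairs $\omega$ with $\overline{\omega}$.

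The technical heart is to pin down the two cup products through the single form $Q$. By \propref{PROP950}, $\alpha\mapsto\delta_\alpha\omega$ is an isomorphism $H^1(\Theta_X)\xrightarrow{\sim}H^{1,1}$, so I may replace $\alpha$ by the free parameter $u:=\delta_\alpha\omega\in H^{1,1}$. The key claim is that, up to a fixed nonzero constant, the second contraction satisfies $\delta_\alpha\eta = Q(u,\eta)\,\overline{\omega}$ for $\eta\in H^{1,1}$. This I would deduce from the symmetry of the iterated cup product $\delta_\alpha\delta_\beta\omega$ in $\alpha,\beta$ together with the pointwise contraction identity $\big(\alpha\lrcorner(\beta\lrcorner\omega)\big)\wedge\omega = \pm(\alpha\lrcorner\omega)\wedge(\beta\lrcorner\omega)$ for a holomorphic $2$-form on a surface; integrating identifies the resulting symmetric form on $H^{1,1}$ with $Q$ up to scale. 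Granting this, the two families of generators become
\[
W_1(u,\eta)=u\otimes\eta + Q(u,\eta)\,\omega\otimes\overline{\omega},\qquad W_2(u,\eta)=Q(u,\eta)\,\overline{\omega}\otimes\omega + \eta\otimes u,
\]
with $u,\eta$ ranging freely over $H^{1,1}$.

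Now take $Z$ in the intersection and expand $Z=\lambda\,\omega\otimes\overline{\omega}+B+\mu\,\overline{\omega}\otimes\omega$ with $B\in H^{1,1}\otimes H^{1,1}$, and let $\mathcal{B}(u,\eta)$ denote the value of the pairing of $B$ against $u\otimes\eta$. Imposing $\langle Z,W_1(u,\eta)\rangle=0$ for all $u,\eta$ forces $\mathcal{B}$ to be a scalar multiple of $Q$, the scalar being determined by $\mu$ (the $\omega\otimes\overline{\omega}$ tail of $W_1$ pairs only with the $\mu\,\overline{\omega}\otimes\omega$ term of $Z$, and $\omega\otimes\overline{\omega}$ is isotropic). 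Hence $B$ is the corresponding multiple of the identity tensor of $Q$, i.e.\ of the $H^{1,1}\otimes H^{1,1}$-component of $[\Delta_X]$; imposing $\langle Z,W_2\rangle=0$ gives $\lambda=\mu$ and the same multiple on the $H^{2,0}\otimes H^{0,2}$ and $H^{0,2}\otimes H^{2,0}$ pieces. Therefore $Z$ is a scalar multiple of the Künneth $(2,2)$-component $\Delta_{H^2}$ of the diagonal class. The final step invokes $[\Delta_X]^\perp$: the pairing $\langle Z,[\Delta_X]\rangle$ collapses to that scalar times $\langle\Delta_{H^2},\Delta_{H^2}\rangle$, which is a nonzero multiple of $\dim H^2(X)$ (consistent with the self-intersection $\Delta_X\cdot\Delta_X=\chi_{\mathrm{top}}(X)=24$). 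So the scalar vanishes and $Z=0$.

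I expect the main obstacle to be the identification $\delta_\alpha\eta = Q(\delta_\alpha\omega,\eta)\,\overline{\omega}$ up to scale, which funnels both cup products through the single intersection form; once this is in hand, the remainder is a short piece of linear algebra on $H^{1,1}\otimes H^{1,1}$ followed by the diagonal self-intersection computation. One should keep mild track of the Künneth signs in the pairing, but these affect neither the proportionality $\mathcal{B}\propto Q$ nor the nonvanishing of $\langle\Delta_{H^2},\Delta_{H^2}\rangle$, and hence not the conclusion.
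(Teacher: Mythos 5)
Your proposal is correct and takes essentially the same route as the paper's proof: both rest on exactly two inputs, namely Proposition \ref{PROP950} (so that $\delta_\alpha\omega$ sweeps out all of $H^{1,1}(X)$ as $\alpha$ varies) and the adjointness identity $\langle \delta_\alpha\omega,\eta\rangle + \langle \omega,\delta_\alpha\eta\rangle = 0$ (the paper's \eqref{E031}), which---after normalizing $Q(\omega,\overline{\omega})=1$ and using $\dim H^{0,2}=1$---is precisely your key formula $\delta_\alpha\eta = -Q(\delta_\alpha\omega,\eta)\,\overline{\omega}$. The remaining difference is only presentational: the paper identifies the spans of the two families of generators with the hyperplanes $[\Delta_X]^\perp\cap\bigl(H^{1,1}\otimes H^{1,1}\oplus H^{2,0}\otimes H^{0,2}\bigr)$ and its mirror (equations \eqref{E030}, \eqref{E032}), whereas you dually show any element of the perp is a scalar multiple of the middle K\"unneth component of $[\Delta_X]$ and kill the scalar via $\langle \Delta_{H^{2}},\Delta_{H^{2}}\rangle \neq 0$; your explicit restriction to the summand $H^2(X)\otimes H^2(X)$ agrees with the paper's implicit convention there.
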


\begin{proof}
Combining Proposition \ref{PROP950} with the fact that
\begin{equation}\label{E031}
\langle \delta_\alpha \omega, \eta \rangle + 
\langle \omega, \delta_\alpha \eta \rangle = 0,
\end{equation}
we obtain
\begin{equation}\label{E029}
\langle [\Delta_X], \delta_\alpha\omega\tensor \eta + \omega \tensor \delta_\alpha \eta\rangle
= 0
\end{equation}
and hence
\begin{equation}\label{E030}
\begin{split}
&\quad \Span \{ \delta_\alpha\omega\tensor \eta  + \omega \tensor \delta_\alpha \eta\}
\\
&= [\Delta_X]^\perp \cap (H^{1,1}(X)\tensor H^{1,1}(X) \oplus H^{2,0}(X)\tensor H^{0,2}(X)).
\end{split}
\end{equation}
Similarly,
\begin{equation}\label{E032}
\begin{split}
&\quad \Span \{ \delta_\alpha\eta\tensor \omega + \eta \tensor \delta_\alpha \omega \}
\\
&= [\Delta_X]^\perp \cap (H^{0,2}(X)\tensor H^{2,0}(X) \oplus H^{1,1}(X)\tensor H^{1,1}(X))
\end{split}
\end{equation}
and \eqref{E022} follows easily.
\end{proof}

Note that $H_f^2(X, \C) = H^{1,1}(X, \Q(1))\tensor \C$ and $\pi_1(\Ss)$ acts on $H_v^2(X, \C)$
irreducibly. It is then not hard to see that
\begin{equation}\label{E011}
H_f^4(Y, \C) \cap H_v^2(X, \C)\tensor H_v^2(X, \C) \cap [\Delta_X]^\perp = \{ 0 \}
\end{equation}
and hence
\begin{equation}\label{E010}
H_f^4(Y,\C)\subset V_X^\perp.
\end{equation}

Since $\underline{r}_{3,1}(\xi) \ne 0$, this shows that $\Phi_{3,1}$ is non-trivial.

\subsection{The purely transcendental regulator $\Psi_{3,1}$}
 
We now turn our attention to the proof
of Theorem \ref{T53}.
More explicitly, we fix a nonvanishing holomorphic 2-form $\omega\in H^{2,0}(X)$ and look at
\begin{equation}\label{E004}
\langle \cl_{3,1}(\xi), \omega\tensor \omega\rangle
\end{equation}
modulo the periods $\int_\gamma \omega\tensor \omega$ for $\gamma\in H_4(X\times X, \Q(1))$.
We claim $\Psi_{3,1}$ is non-trivial, or equivalently,
$\langle \cl_{3,1}(\xi), \omega\tensor \omega\rangle$ is not a period for some 
$\xi\in \CH^3(X\times X, 1)$. Here we go slightly beyond the range
of $\ell$ in Theorem \ref{T28}, namely we allow $\ell = -1,\ 0$. More specifically we
consider 
\begin{equation}\label{E666}
\begin{split}
H_{\alg}^1(Y,\Theta_Y) \to \hom\big(H^{4,0}(Y),H^{3,1}(Y)\big),\\
H_{\alg}^1(Y,\Theta_Y)^{\otimes 2} \to   \hom\big(H^{4,0}(Y),H^{2,2}(Y)\big),
\end{split}
\end{equation}
where again $Y = X\times X$ is a self product of a very general projective $K3$
surface $X$, and $H_{\alg}^1(Y,\Theta_Y)$ is identified with the first order deformation
space of a universal family of projective $K3$'s. Of course if the former map
in (\ref{E666}) were surjective, then the latter map could  be replaced by
\[
H_{\alg}^1(Y,\Theta_Y) \to \hom\big(H^{3,1}(Y),H^{2,2}(Y)\big).
\]
Let us assume for the moment that
both maps in (\ref{E666}) are surjective. Then by the same reasoning as in the previous section,
one could argue that $\Psi_{3,1}$ is non-trivial.  However by a dimension count, it is clear
that both maps in (\ref{E666}) are not surjective. We remedy this by passing to the symmetric product
$\hat{Y} = Y/\langle \sigma\rangle$, where $\langle \sigma\rangle$ is the
symmetric group of order $2$  acting on $Y=X\times X$. In fact, insead
of working directly on $\hat{Y}$, we will work with the equivariant cohomologies
$H^4(Y,\Q)^{\sigma}$, and $\CH^3(Y,1)^{\sigma}$. That is, they
consist of classes fixed under $\sigma$. Note that $H^4(Y,\Q)^{\sigma}$
is still a Hodge structure. 
With the same setup for $\Phi_{3,1}$ and
following the same argument by differentiating,
we consider the orthogonal complements
\[
 (\nabla_\alpha(\omega\tensor\omega))^\perp 
\text{ and }
(\nabla_\beta\nabla_\alpha(\omega\tensor\omega))^\perp,
\]
following the situation in (\ref{E666}).
In particular, we are interested in the subspace
\begin{equation}\label{E033}
\begin{split}
&\bigcap_{\alpha,\beta} (\delta_\alpha\delta_\beta \omega \tensor \omega + 
\delta_\alpha \omega \tensor \delta_\beta \omega + \delta_\beta \omega \tensor
\delta_\alpha \omega + \omega \tensor \delta_\alpha \delta_\beta \omega)^\perp\cap\\
& \bigcap_\alpha (\delta_\alpha \omega \tensor \omega
+ \omega \tensor \delta_\alpha \omega)^\perp \cap (\omega\tensor \omega)^\perp
\cap [\Delta_X]^\perp
\end{split}
\end{equation}
when restricted to $Y$. Note that
\begin{equation}\label{E025}
\langle \delta_\alpha \omega, \delta_\beta \omega\rangle
+ \langle \omega, \delta_\alpha \delta_\beta \omega\rangle =
\langle \delta_\alpha \omega, \delta_\beta \omega\rangle +
\langle \omega, \delta_\beta \delta_\alpha \omega\rangle = 0
\end{equation}
by \eqref{E031} and hence
\begin{equation}\label{E034}
\delta_\alpha\delta_\beta \omega \tensor \omega + 
\delta_\alpha \omega \tensor \delta_\beta \omega + \delta_\beta \omega \tensor
\delta_\alpha \omega + \omega \tensor \delta_\alpha \delta_\beta \omega \in [\Delta_X]^\perp
\end{equation}
for all $\alpha,\beta\in H^1(\Theta_X)$.
Similarly,
\begin{equation}\label{E035}
\delta_\alpha \omega \tensor \omega
+ \omega \tensor \delta_\alpha \omega \in [\Delta_X]^\perp
\end{equation}
for all $\alpha\in H^1(\Theta_X)$.
Although we do not need it, \eqref{E025} also implies that $\delta_\alpha \delta_\beta = \delta_\beta\delta_\alpha$
and hence the map
\begin{equation}\label{E026}
H^1(\Theta_X) \tensor H^1(\Theta_X) \xrightarrow{} \hom(H^{2,0}(X), H^{0,2}(X))
\end{equation}
induced by $H^1(\Theta_X) \tensor H^1(\Theta_X) \tensor H^{2,0}(X) \to H^{0,2}(X)$ is a symmetric nondegenerate pairing. Obviously,
\begin{equation}\label{E036}
\begin{split}
&\quad
\Span\{
\delta_\alpha\delta_\beta \omega \tensor \omega + 
\delta_\alpha \omega \tensor \delta_\beta \omega + \delta_\beta \omega \tensor
\delta_\alpha \omega + \omega \tensor \delta_\alpha \delta_\beta \omega\}\\
& =[\Delta_X]^\perp \cap H^{2,2}(Y)^\sigma
\end{split} 
\end{equation}
and
\begin{equation}\label{E037}
\Span\{\delta_\alpha \omega \tensor \omega
+ \omega \tensor \delta_\alpha \omega\} = [\Delta_X]^\perp \cap H^{3,1}(Y)^\sigma
\end{equation}
by \eqref{E034}, \eqref{E035} and the nondegeneracy of \eqref{E026}. Therefore,
\begin{equation}\label{E038}
\begin{split}
&\bigcap_{\alpha,\beta} (\delta_\alpha\delta_\beta \omega \tensor \omega + 
\delta_\alpha \omega \tensor \delta_\beta \omega + \delta_\beta \omega \tensor
\delta_\alpha \omega + \omega \tensor \delta_\alpha \delta_\beta \omega)^\perp\cap\\
& \bigcap_\alpha (\delta_\alpha \omega \tensor \omega
+ \omega \tensor \delta_\alpha \omega)^\perp \cap (\omega\tensor \omega)^\perp
\cap [\Delta_X]^\perp \cap H^4(Y, \C)^\sigma = \{ 0 \}.
\end{split}
\end{equation}
Thus,
in order to prove \thmref{T53}, we just have to find $\xi$ such that
$\underline{r}_{3,1}(\xi)\ne 0$ and $\cl_{3,1}(\xi)\in H^4(Y,\C)^\sigma$. The obvious way to do this
is to find an equivariant higher Chow class $\xi\in \CH^3(Y, 1)^\sigma$ with $\underline{r}_{3,1}(\xi)\ne 0$.
Namely, we need a slightly stronger statement than \eqref{E334}. That is,

\begin{thm}\label{THM002}
There exists $\xi\in \CH^3(X\times X, 1)^\sigma$ such that
$\underline{r}_{3,1}(\xi) \ne 0$
for a general projective $K3$ surface $X$.
\end{thm}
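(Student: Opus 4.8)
The plan is to promote the non-vanishing \eqref{E334} of \cite{C-L2} to a $\sigma$-equivariant statement, where $\sigma$ denotes the involution interchanging the two factors of $X\times X$. The first observation is that $\sigma$ acts compatibly on the entire picture: it operates on $\CH^3(X\times X,1)$ by $\sigma^{\ast}$, it stabilizes the subspace $V_X\subset H^{2,2}(X\times X,\R)$ of \eqref{E003} (the two ``decomposable'' conditions there are interchanged by $\sigma$, while $[\Delta_X]$ is $\sigma$-fixed), and the reduced real regulator is $\sigma$-equivariant, $\ul{r}_{3,1}(\sigma^{\ast}\xi)=\sigma^{\ast}\ul{r}_{3,1}(\xi)$, since $r_{3,1}$ is functorial for automorphisms and the projection onto $V_X$ commutes with $\sigma^{\ast}$. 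Consequently $V_X=V_X^{+}\oplus V_X^{-}$ splits into $\pm 1$-eigenspaces, and for any class $\xi_0$ the symmetrization $\xi:=\xi_0+\sigma^{\ast}\xi_0$ lies in $\CH^3(X\times X,1)^{\sigma}$ with $\ul{r}_{3,1}(\xi)=(1+\sigma^{\ast})\ul{r}_{3,1}(\xi_0)$ equal to twice the projection of $\ul{r}_{3,1}(\xi_0)$ to $V_X^{+}$.

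Starting from a cycle $\xi_0$ furnished by \eqref{E334}, so that $\ul{r}_{3,1}(\xi_0)\neq 0$ for general $X$, the candidate is this $\sigma$-invariant $\xi$. It settles Theorem \ref{THM002} as soon as $\ul{r}_{3,1}(\xi)\neq 0$, i.e. as soon as the image of $\ul{r}_{3,1}$ is not contained in the anti-invariant subspace $V_X^{-}$. Verifying this last point is the crux: \eqref{E334} only guarantees a nonzero class somewhere in $V_X$, and a priori every class it detects could be anti-invariant, in which case symmetrization would cancel it. Note that both $V_X^{+}$ and $V_X^{-}$ are large (each meets $\mathrm{Sym}^2 H_v^{1,1}(X)$ versus $\wedge^2 H_v^{1,1}(X)$ inside $H_v^{1,1}(X)\tensor H_v^{1,1}(X)$, as well as the $\omega\tensor\bar\omega$ pieces), so there is no purely formal reality obstruction and one genuinely needs input from the construction.

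To rule out $\mathrm{Im}(\ul{r}_{3,1})\subseteq V_X^{-}$ I would return to the construction behind \eqref{E334}. The cleanest route is to re-run it with input data chosen symmetrically in the two factors --- a curve $C\subset X$ together with its defining function used identically on each copy --- so that the resulting cycle is supported on a $\sigma$-stable surface in $X\times X$ with a $\sigma$-symmetric defining function, hence already lies in $\CH^3(X\times X,1)^{\sigma}$ before any averaging. For such a cycle the explicit pairing of \cite{C-L2} should deposit the surviving regulator value in the symmetric component $\mathrm{Sym}^2 H_v^{1,1}(X)\subset H_v^{1,1}(X)\tensor H_v^{1,1}(X)$, on which $\sigma$ acts as $+1$; thus the value lands in $V_X^{+}$ and is nonzero. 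Equivalently, one inspects the current/membrane integral computing $\ul{r}_{3,1}$ and checks that the nonzero entries produced in \cite{C-L2} are symmetric under interchange of the two factors, so that no cancellation occurs between a term and its $\sigma$-image.

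The hard part is precisely this equivariant bookkeeping of the \cite{C-L2} computation: one must confirm that the explicit nonzero contribution detected there survives the symmetric constraint rather than cancelling against its $\sigma$-conjugate in the regulator integral. Granting it, one obtains $\xi\in\CH^3(X\times X,1)^{\sigma}$ with $\ul{r}_{3,1}(\xi)\neq 0$ for general projective $X$, which is exactly Theorem \ref{THM002}; combined with \eqref{E038} this forces $\cl_{3,1}(\xi)\in H^4(Y,\C)^{\sigma}$ and yields the non-triviality of $\Psi_{3,1}$ in Theorem \ref{T53}.
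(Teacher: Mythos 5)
Your proposal is correct and takes essentially the same route as the paper: the paper's entire proof of Theorem \ref{THM002} is the one-line observation that it is ``a consequence of the explicit construction of the cycle in \cite{C-L2}'' --- i.e.\ the explicit cycle there is (or can be taken) $\sigma$-symmetric with nonvanishing reduced regulator, which is precisely the symmetric re-run of the \cite{C-L2} construction that you propose as the crux. Your preliminary analysis of why naive averaging $\xi_0+\sigma^{\ast}\xi_0$ could a priori cancel into $V_X^{-}$ is sound motivation, but the substantive step you identify coincides with what the paper's citation encapsulates.
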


\begin{proof} This is a consequence of the explicit construction of the cycle
in \cite{C-L2}.
\end{proof}

\section{Intermezzo: Lattice polarized $K3$ surfaces, hypersurface normal forms, 
and modular parametrization}\label{SECImz}

At this point it is natural to ask how one might construct explicit families of $K3$ surfaces satisfying the conditions of \thmref{T27}, with enough ``internal structure'' to make it possible to construct explicit cycles with nontrivial $\Phi_{2,1}$.  In light of \S \ref{SEC022}, it would also be highly desirable to have a means of explicitly constructing the Picard-Fuchs operators for these families.  

Families of the sort required by \thmref{T27} with a fixed generic N\'{e}ron-Severi lattice are known as \emph{lattice polarized $K3$ surfaces} \cite{Dol}.  Let $X$ be an algebraic $K3$ surface over the field of complex numbers.  If $M$ is an even lattice of signature $(1, \ell - 1)$ (with $\ell > 0$), then an \emph{$M$-polarization} on $X$ is a primitive lattice embedding
$$ i \ : \ M \hookrightarrow {\rm NS}(X)$$
such that the image $i(M)$ contains a pseudo-ample class.  There is also a coarse moduli space ${\mathcal M}_M$  for equivalence classes
of pairs $(M,i)$, which satisfies a version of the global Torelli theorem.  Moreover, surjectivity of the period map holds for families which are maximum in the sense of \S \ref{SSNPFO}; any family whose image in ${\mathcal M}_M$ is surjective satisfies this condition.  

An elliptic $K3$ surface with section consists of a triple $(X, \phi, S)$ of a $K3$ surface $X$, an elliptic fibration $\phi \ : \ X \rightarrow \mathbb{P}^1$, and a smooth rational curve $S \subset X$ forming a section of $\phi$.  This ``internal structure'' of an elliptic fibration with section on a $K3$ surface $X$ is equivalent to a lattice polarization of $X$ by the even rank two hyperbolic lattice
$$
H \ := \ \left( \begin{array}{cc} 0 & 1 \\ 1 & 0 \end{array} \right) 
$$
(see \cite[Theorem 2.3]{C-D1} for details).  The moduli space ${\mathcal M}_H$ of $H$-polarized $K3$ surfaces has complex dimension 18, and the generic elliptic $K3$ surface with section has 24 singular fibers of Kodaira type $I_1$.  Instead of working with a very general member of this family, which will have Picard rank $\ell = 2$, one can enhance the lattice polarization by considering a higher rank lattice $M$, with $H$ as a sublattice.  For each distinct embedding of $H$ into $M$, up to automorphisms of the ambient lattice $M$, we find an elliptic surface structure with section on all $M$-polarized $K3$ surfaces.  There is a decomposition of the N\'{e}ron-Severi lattice 
$$ {\rm NS}(X) = H \oplus W_X \ ,$$
where $W_X$ is the negative definite sublattice of $NS(X)$ generated by classes associated to algebraic cycles orthogonal to both the elliptic fiber and the section.  The sublattice
$$ W_X^{root} := \{ r \in W_X \ | \ \langle r , r \rangle = -2 \} $$
is called the {\em ADE type} of the elliptic fibration with section, as it decomposes naturally into the sum of ADE type sublattices spanned by $c_1$ of the irreducible (rational) components of the singular fibers of the elliptic fibration (see \cite[Section 6]{C-D1}).

For the explicit computations in \S \ref{SEC04} and \S \ref{SECTrReg} we will make essential use of one particular elliptic fibration with section on a family of $K3$ surfaces polarized by the lattice $H \oplus E_8 \oplus E_8$. It is not, in fact, the ``standard'' fibration, which corresponds to $W_X = E_8 \oplus E_8$, but the ``alternate fibration'' for which $W_X = D_{16}^+$ (the other even negative definite rank 16 lattice).  Up to ambient lattice automorphisms, these are the only two distinct embeddings of the lattice $H$ into $H \oplus E_8 \oplus E_8$.  As a result, we know that these are the only two elliptic fibrations with section on a very general member of this family of $K3$ surfaces \cite{C-D2}.

\subsection{Normal forms and elliptic fibrations} 

The natural setting for \thmref{T27} is families of lattice-polarized $K3$ surfaces which cover their corresponding coarse moduli spaces.  In order to effectively compute, we first need to construct such maximal families of $K3$ surfaces.  

The most classical construction of $K3$ surfaces is as smooth quartic (anticanonical) hypersurfaces in $\mathbb{P}^3$.  A very general member of this family will have a $4$-polarization and Picard rank $\ell = 1$.  It is possible, however, to construct subfamilies of smooth quartics with natural polarization by lattices of much higher rank.  For example, consider the ``Fermat quartic pencil''
\begin{equation}\label{Fermat} X_t := \{ x^4 + y^4 + z^4 + w^4 + t \cdot x y z w = 0 \} \subset \mathbb{P}^3 \ . \end{equation} 
For generic $t \in \mathbb{P}^1$, the group $G := (\mathbb{Z}/4\mathbb{Z})^2$ acts on $X_t$ by 
$$ x \mapsto \lambda \cdot x \ , \ y \mapsto \mu \cdot y \ , \ z \mapsto \lambda^{-1} \mu^{-1} \cdot z , $$
where $\lambda$ and $\mu$ are fourth roots of unity.  

The induced action of this group on the cohomology of $X_t$ fixes the holomorphic two-form $\omega_t$ (i.e., it acts symplectically).  Nikulin's classification of symplectic actions on $K3$ surfaces then implies that there is a rank 18 negative definite sublattice in the N\'{e}ron-Severi group of $X_t$, which together with the (fixed) $4$-polarization class means that the Picard rank of $X_t$ is at least 19.  As the family is not isotrivial, the Picard rank is not generically equal to 20, and we conclude that the family $X_t, t \in \mathbb{P}^1$ satisfies the conditions of  \thmref{T27} with $\ell = 19$.  (See \cite{Wh} for a general set of tools to bound the Picard rank of pencils of hypersurfaces with a high degree of symmetry.)  This is an example of a \emph{normal form} for the corresponding class of lattice polarized $K3$ surfaces, in this case providing a natural generalization of the Hesse pencil normal form for cubic curves in $\mathbb{P}^2$.

There is another family $Y_t$ of $K3$ surfaces with $\ell = 19$ easily derivable from the $X_t$ in (\ref{Fermat}) by quotienting each $X_t$ by the group $G$ and simultaneously resolving the resulting singularities in the family.  The family $Y_t$, known as the ``quartic mirror family,'' has rank 19 lattice polarization by the lattice $M_2 := H \oplus E_8 \oplus E_8 \oplus \langle -4 \rangle$.  

Another way to construct families of $4$-polarized $K3$ surfaces with an enhanced lattice polarization is to consider \emph{singular} quartic hypersurfaces in $\mathbb{P}^3$.  By introducing ordinary double point singularities of ADE type, it is a simple matter to engineer (upon minimal resolution) $K3$ surfaces with large negative definite sublattices of ADE type in their N\'{e}ron-Severi groups.  One feature that both the smooth and singular quartic hypersurface constructions enjoy is that for each line lying on the surface there is a corresponding elliptic fibration structure, defined by taking the pencil of planes passing through the line and considering the excess intersection of each (a pencil of cubic curves).  In this way, suitably nice quartic normal forms readily admit the structure of elliptic fibrations with section corresponding to various embeddings of the hyperbolic lattice $H$ into their polarizing lattices.

Let us illustrate this with the key example for the constructions in \S\ref{SEC04} and \S\ref{SECTrReg}, the singular quartic normal form for $K3$ surfaces polarized by the lattice
$$ M := H \oplus E_8 \oplus E_8$$
\cite{C-D2}.  Let $(X,i)$ be an $M$-polarized $K3$ surface.  The there exists a triple $(a,b,d) \in \mathbb{C}^3$, with $d \neq 0$ such that $(X,i)$ is isomorphic to the minimal resolution of the quartic surface
$$Q_M(a,b,d): \ y^2 z w - 4 x^3 z + 3 a x z w^2 + b z w^3 - \frac{1}{2} (d z^2 w^2 + w^4) = 0 \ .$$
Two such quartics $Q_M(a_1, b_1, d_1)$ and $Q_M(a_2, b_2, d_2)$ determine via minimal resolution isomorphic $M$-polarized $K3$ surfaces if and only if
$$(a_2, b_2, d_2) = (\lambda^2 a_1, \lambda^3 b_1, \lambda^6 d_1)$$
for some $\lambda \in \mathbb{C}^*$.  Thus the coarse moduli space for $M$-polarized $K3$ surfaces is the open variety
$$\mathcal{M}_M = \{ [a,b,d] \in \mathbb{W}\mathbb{P}(2,3,6) \ | \ d \neq 0 \} $$
with fundamental invariants
$$\frac{a^3}{d} \ \mbox{and} \ \frac{b^2}{d} \ .$$

On the singular quartic hypersurface $Q_M(a,b,d) \subset \mathbb{P}^3$ there are two distinct lines
$$ \{ x = w = 0 \} \ \mbox{and} \ \{ z = w = 0 \} \ ,$$
and the points 
$$P_1 := [0,1,0,0] \ \mbox{and} \  P_2 := [0,0,1,0]$$ 
are rational double point singularities on $Q_M(a,b,d)$ of ADE types $A_{11}$ and $E_6$ respectively.
The standard fibration is induced by the projection to $[z,w]$, and the alternate fibration is induced by the projection to $[x,w]$.  Moreover, among the exceptional rational curves in the resolution of $P_1$ are sections of both elliptic fibrations on $X(a,b,d)$; among the exceptional rational curves in the resolution of $P_2$ is a second section of the alternate fibration on $X(a,b,d)$.

It is useful to note that both the quartic mirror normal form $Y_t$ for $M_2$-polarized $K3$ surfaces and the $M$-polarized normal form $X(a,b,d)$ admit natural reinterpretations as the generic anticanonical hypersurfaces in certain toric Fano varieties \cite{Dor1,Dor2,CDLW}.  In both cases we build the toric Fano variety from the normal fan of a reflexive polytope.  For the $M_2$-polarized case, the polytope is the convex hull of 
$$ \{ (1,0,0), (0,1,0), (0,0,1), (-1,-1,-1) \} \subset \mathbb{R}^3 \ , $$ 
polar to the Newton polytope for $\mathbb{P}^3$.  For the $M$-polarized case, the polytope is the convex hull of
$$ \{ (1,0,0), (0,1,0), (0,0,1), (-1,-4,-6) \} \ ,$$
polar to the Newton polytope for $\mathbb{W}\mathbb{P}(1,1,4,6)$.  What is more, the two elliptic fibrations with section on a very general $X(a,b,d)$ are themselves induced by ambient toric fibrations on the toric variety in which it sits as a hypersurface.  Combinatorially, these correspond to reflexive ``slices'' of the corresponding polytope, i.e., planes in $\mathbb{R}^3$ which slice the reflexive polytope in a reflexive polygon.

\subsection{Picard-Fuchs equations and modular parametrization} 

There is a reverse nesting of moduli spaces corresponding to embeddings of the polarizing lattices.  In the context of the families $Y_t$ and $X(a,b,d)$ above, the usual embedding
$$ H \oplus E_8 \oplus E_8 \hookrightarrow H \oplus E_8 \oplus E_8 \oplus \langle -4 \rangle $$
corresponds to an algebraic parametrization 
$$ a(t) = (t + 16) (t + 256) \ , \ b(t)  = (t - 512)  (t - 8)  (t + 64) \ , \ d(t) = 2^{12} \, 3^6 \, t^3 $$
of a genus zero \emph{modular curve}.  To see the connection with classical modular curves, and indeed the Hodge-theoretic evidence for the underlying geometry, it is instructive to consider the Picard-Fuchs systems annihilating periods on the $K3$ surfaces involved.

Let $f(t)$ denote a period of the holomorphic $2$-form on $X(a,b,d)$.  The Griffiths-Dwork method for producing Picard-Fuchs systems yields (in an affine chart, where we have set $a = 1$)
$$ \left( \frac{\partial^2}{\partial b^2}  - 4  d \frac{\partial^2}{\partial d^2}  - 4 \frac{\partial}{\partial d} \right) f(b,d)  = 0$$
and
$$ \left( (-1 + b^2 + d) \frac{\partial^2}{\partial b^2}  + 2 b \frac{\partial}{\partial b}  + 4 b d \frac{\partial^2}{\partial b \partial d}  + 2 d \frac{\partial}{\partial d} + \frac{5}{36} \right) f(b,d) = 0$$ 
\cite{CDLW}.  By reparametrizing in terms of variables $j_1$ and $j_2$ 
$$b^2 = \frac{(j_1 - 1)(j_2 - 1)}{j_1 j_2} \ , \ d = \frac{1}{j_1 j_2}$$
we find that the Picard-Fuchs system completely decouples as
$$ 72 j_1 \left( 2 (j_1 - 1) j_1 \frac{\partial^2}{\partial j_1^2}  + (2 j_1 - 1) \frac{\partial}{\partial j_1} \right) f(j_1, j_2) - 5 f(j_1, j_2) = 0$$
and
$$ 72 j_2 \left( 2 (j_2 - 1) j_2 \frac{\partial^2}{\partial j_2^2}  + (2 j_2 - 1) \frac{\partial}{\partial j_2} \right) f(j_1, j_2) - 5 f(j_1, j_2) = 0 \ .$$
This implies that the periods of the $M$-polarized $K3$ surfaces split naturally as products $f(j_1, j_2) = f_1(j_1) \cdot f_2(j_2)$.

At this point it is natural to ask whether the second order ordinary differential equation satisfied by $f(j)$ is itself a Picard-Fuchs equation for a family of elliptic curves.  One can check for a family of elliptic curves over $\mathbb{P}^1_t$ in Weierstrass normal form
$$ \left\{ E_t \right\} := \left\{ y^2 z - 4 x^3 + g_2(t) x z^2 + g_3(t) z^3 = 0 \right\} \subset \mathbb{P}^2$$
that the periods of a suitably normalized holomorphic one-form on $E_t$
$$ g_2(t)^{\frac{1}{4}}  \frac{dx}{y} $$
satisfy Picard-Fuchs equations of the form of the second order equations above.  
Thus, by the Hodge Conjecture, we expect there to be an algebraic correspondence between $M$-polarized $K3$ surfaces and abelian surfaces (with principal polarization) which split as a product of a pair of elliptic curves.  This correspondence was made explicit in \cite{C-D2}; we recall the necessary features for our higher K-theory computations in \S \ref{SEC04} below.

What then is the meaning of the special subfamily $Y_t$ in terms of these split abelian surfaces?  When specialized to the subfamily $Y_t = X(a(t),b(t),c(t))$, the Griffiths-Dwork method produces the following Picard-Fuchs differential equation
$$ f^{(iii)}(t) + \frac{3 (3 t + 128)}{2 t (t + 64)}  f''(t)  +  \frac{13 t + 256}{4 t^2 (t + 64)} f'(t) + \frac{1}{8 t^2 (t+ 64)} f(t) = 0 \ .$$
On a general parametrized disk in the moduli space $\mathcal{M}_M$, the Picard-Fuchs ODE will have rank 4, just as the full Picard-Fuchs system.  The drop in rank indicates a special relationship between the two elliptic curves $E_{\tau_1}$ and $E_{\tau_2}$ corresponding to $Y_t$.  A differential algebraic characterization of the curves in $\mathcal{M}_M$ on which the Picard-Fuchs ODE drops in rank was given in \cite[Theorem 3.4]{CDLW}.  In fact, in the $M_2$-polarized case, the relationship is simply the existence of a two-isogeny between the two elliptic curves, i.e., $\tau_2 = 2 \cdot \tau_1$.  More generally, the $M_n$-polarized case corresponds to a cyclic $n$-isogeny, i.e., $\tau_2 = n \cdot \tau_1$.

Given that $M$-polarized $K3$ surfaces correspond to abelian surfaces which are the products of a pair of elliptic curves, the natural modular parameters on the (rational) coarse moduli space $\mathcal{M}_M$ are the elementary symmetric polynomials in the two $j$-invariants $j_1 = j(\tau_1)$ and $j_2 = j(\tau_2)$
$$\sigma := j_1 + j_2 \ \mbox{and} \ \pi := j_1 \cdot j_2 \ .$$
In this notation, it is easy to identify explicit rational curves in $\mathcal{M}_M$ over which the Picard-Fuchs differential equation has maximal rank ($=4$).  One such locus, which arises in the context of the construction of $K3$ surface fibered Calabi-Yau threefolds realizing hypergeometric variations, is specified by simply setting $\sigma = 1$ \cite{No}.  The Picard-Fuchs ODE has fourth order, and takes the following form
$$
f^{(iv)}(s) + \frac{2(4 s^2 - 3 s - 2)}{s(s - 1)(s + 1)} f^{(iii)}(s)  + \frac{1031 s^3 - 553 s^2 - 1175 s - 167}{72 s^2 (s - 1) (s + 1)^2}  f''(s)    $$
$$  + \,  \frac{167 s^2 - 239 s - 118}{36 s^2 (s - 1) (s + 1)^2}  f'(s)  +  \frac{385 (s - 1)^2}{20736 s^4 (s + 1)^2} f(s) = 0 $$
which splits as a tensor product of two very closely related factor second order ODEs
$$f''_1(s) + \frac{3 s + 1}{2 s (s + 1)} f'_1(s) + \frac{5}{144 s (s + 1)} f_1(s) = 0$$
and
$$f''_2(s) + \frac{3 s + 1}{2 s (s + 1)} f'_2(s) + \frac{5}{144 s^2 (s + 1)} f_2(s) = 0 $$
corresponding to the two families of elliptic curves satisfying $j_1(s) + j_2(s) = 1$.  Examples such as this provide a source of families of explicit non-maximal families of $K3$ surfaces to explore.

Instead of looking at superlattices of $H \oplus E_8 \oplus E_8$ such as $M_n$, one can consider sublattices such as $N := H \oplus E_7 \oplus E_8$ and $S := H \oplus E_7 \oplus E_7$ \cite{C-D3,C-D4}.  Moduli spaces of $K3$ surfaces polarized by these sublattices are themselves parametrized by modular functions (and contain $\mathcal{M}_M$ as a natural sublocus).   For example, there is a normal form for $N$-polarized $K3$ surfaces extending the singular quartic normal form for $M$-polarized $K3$ surfaces with one additional monomial deformation
$$Q_N(a,b,c,d): \ y^2 z w - 4 x^3 z + 3 a x z w^2 + b z w^3 + c x z^2 w - \frac{1}{2} (d z^2 w^2 + w^4) = 0 \ .$$
The associated coarse moduli space $\mathcal{M}_N$ is again an open subvariety of a weighted projective space
$$\mathcal{M}_N = \{ [a,b,c,d] \in \mathbb{W}\mathbb{P}(2,3,5,6) \ | \ c \neq 0 \ \mbox{or} \ d \neq 0 \} $$
with modular parametrization
$$ [a,b,c,d] = \left[ \mathcal{E}_4, \mathcal{E}_6, 2^{12} 3^5 \mathcal{C}_{10}, 2^{12} 3^6 \mathcal{C}_{12} \right] \ ,$$
where $\mathcal{E}_4$ and $\mathcal{E}_6$ are genus-two Eisenstein series of weights $4$ and $6$, and $\mathcal{C}_{10}$ and $\mathcal{C}_{12}$ are Igusa's cusp forms of weights $10$ and $12$ \cite[Theorem 1.5]{C-D3}.  

The connection to genus two curve moduli here is suggestive of the fundamental geometric 
fact that $N$-polarized $K3$ surfaces are Shioda-Inose surfaces coming from principally-polarized abelian surfaces.  The hypersurface normal form once again has two natural elliptic fibration structures with section, just as in the $M$-polarized case, and the Nikulin involution which gives rise to the Shioda-Inose structure can be seen most naturally as the operation of ``translation by $2$-torsion'' in the alternate elliptic fibration \cite{C-D4}.  There is a further extension to a 
normal form for $S$-polarized $K3$ surfaces.  In this case, most of the related geometric structures are still present, and we find a still more general modular parametrization of $\mathcal{M}_{S}$.  For all these families of lattice-polarized $K3$ surfaces in normal form, Picard-Fuchs equations can be obtained via the Griffiths-Dwork method applied directly to the singular quartic equations or in their realization as anticanonical hypersurfaces in Gorenstein toric Fano threefolds.  

The explicit computations which follow in \S\ref{SEC04} and \S\ref{SECTrReg} offer a glimpse of the range of phenomena surrounding \thmref{T27} which become accessible when we work with modular parametrizations of hypersurface normal forms for lattice polarized $K3$ surfaces equipped with well-chosen elliptic fibrations.  Both generalization to related higher-dimensional moduli spaces and manipulation of the associated explicit Picard-Fuchs systems now becomes possible.

\section{Explicit $K_{1}$ class on a family of Shioda-Inose $K3$ surfaces}\label{SEC04}

We now turn to a direct computation on the modular 2-parameter family
$\xab$ of $M:=H\oplus E_{8}\oplus E_{8}$-polarized (Picard-rank
18) $K3$'s introduced by Clingher and Doran {\cite{C-D2}}. Here $\xab$
($a,b\in\C$) is the minimal desingularization of \begin{equation}\left\{ Y^{2}Z-P(\theta)W^{2}Z-\frac{1}{2}Z^{2}W-\frac{1}{2}W^{3}=0\right\} \subset\PP_{[Y:Z:W]}^{2}\times\PP_{\theta}^{1},
\end{equation}where $P(\theta):=4\theta^{3}-3a\theta-b$. The results of {\cite{C-L1}}
already tell us that the real regulator map \begin{equation}r_{2,1}:\CH^{2}(\xab,1)\to\text{Hom}_{\R}(H_{v}^{1,1}(\xab,\mathbb{R}),\mathbb{R})
\end{equation}is generically surjective, making $\Phi_{2,1}$ nontrivial for very
general $(a,b)$. (We note that for those $\xab$ with Picard rank
18, $H_{v}^{1,1}=H_{tr}^{1,1}$.) The proof is based on \emph{non}-explicit
deformations of decomposable classes on Picard-rank 20 $K3$'s.

What we felt was missing here and in the literature are concrete indecomposable
cycles on which $r_{2,1}$ and $\Phi_{2,1}$ are nontrivial, particularly
those which arise naturally in the context of an internal elliptic
fibration. In our example, the projection $\xab\to\PP_{\theta}^{1}$
produces the so-called \emph{alternate fibration} with $6$ fibers
of Kodaira type $I_{1}$ and one fiber of type $I_{12}^{*}$. The
$I_{1}$ fibers provide \emph{the} most natural source of classes
in $\CH^{2}(\xab,1)$ provided one can show their real regulators are
nonzero.

This turns out to require some serious and interesting work, by first
passing to a \emph{Kummer} $K3$ family $\kab$ which is the minimal
resolution of both the quotient of $\xab$ by the Nikulin involution
and the quotient of a product of elliptic curves $E_{\alpha}\times E_{\beta}$
by $(-\text{id},-\text{id})$. This {}``intermediate'' setting seems
to be the one place where \emph{both} the normalization of the rational
curves supporting the family of $K_{1}$ classes (namely, a N\'eron
2-gon), and the closed $(1,1)$-form against which we integrate its
regulator current to compute $r_{2,1}$, are tractable. In fact, the
form has some singularities, even after pulling back the rational
curves, and so the computation requires careful additional justification.

\subsection{Kummer $K3$ geometry}

We begin with a review of special features of the Kummer family from
{\cite{C-D2}}, which has two parameters $\alpha,\beta\in\PP^{1}\backslash\{0,1,\infty\}$:
\begin{equation}\kabc':=\left\{ Z^{2}XY=(X-W)(X-\alpha W)(Y-W)(Y-\beta W)\}\right\} \subset\PP^{3}
\end{equation}is the singular model, with affine equation ($x,y,z=\frac{X}{W},\frac{Y}{W},\frac{Z}{W}$)
\begin{equation}z^{2}xy=(x-1)(x-\alpha)(y-1)(y-\beta),
\end{equation}and $\kab$ shall denote its minimal desingularization. Recall that
a Kummer is usually constructed by taking a pair of elliptic curves,
in this case \begin{equation}\xymatrix{
\left\{ u^2 = x(x-1)(x-\a) \right\} \ar @{=} [r] & : E_{\a} \ar @(ur,dr) []  & \mspace{-30mu} {\jmath_{\a}}:(x,u)\mapsto (x,-u)
\\
\left\{ v^2 = y(y-1)(y-\b) \right\} \ar @{=} [r] & : E_{\b} \ar @(ur,dr) []  & \mspace{-30mu} {\jmath_{\b}}:(y,v)\mapsto (y,-v) ,}
\end{equation}then taking the quotient $\kabc$ of $E_{\a}\times E_{\b}$ by the
automorphism $\jmath_{\a}\times\jmath_{\b}$. This is singular at
the image of the 16 products of 2-torsion points -- ordinary double
points whose resolution yields 16 exceptional $\PP^{1}$'s , and produces
$\kab$. 

In the following diagram of rational curves on $\kab$, the exceptional
divisors are represented by arcs; while the proper transforms of the
quotients of $E_{\a}\times\{\text{2-torsion point}\}$ resp. $\{\text{2-torsion point}\}\times E_{\b}$
are represented by horizontal resp. vertical lines:\begin{equation}\label{pic}\includegraphics[scale=0.6]{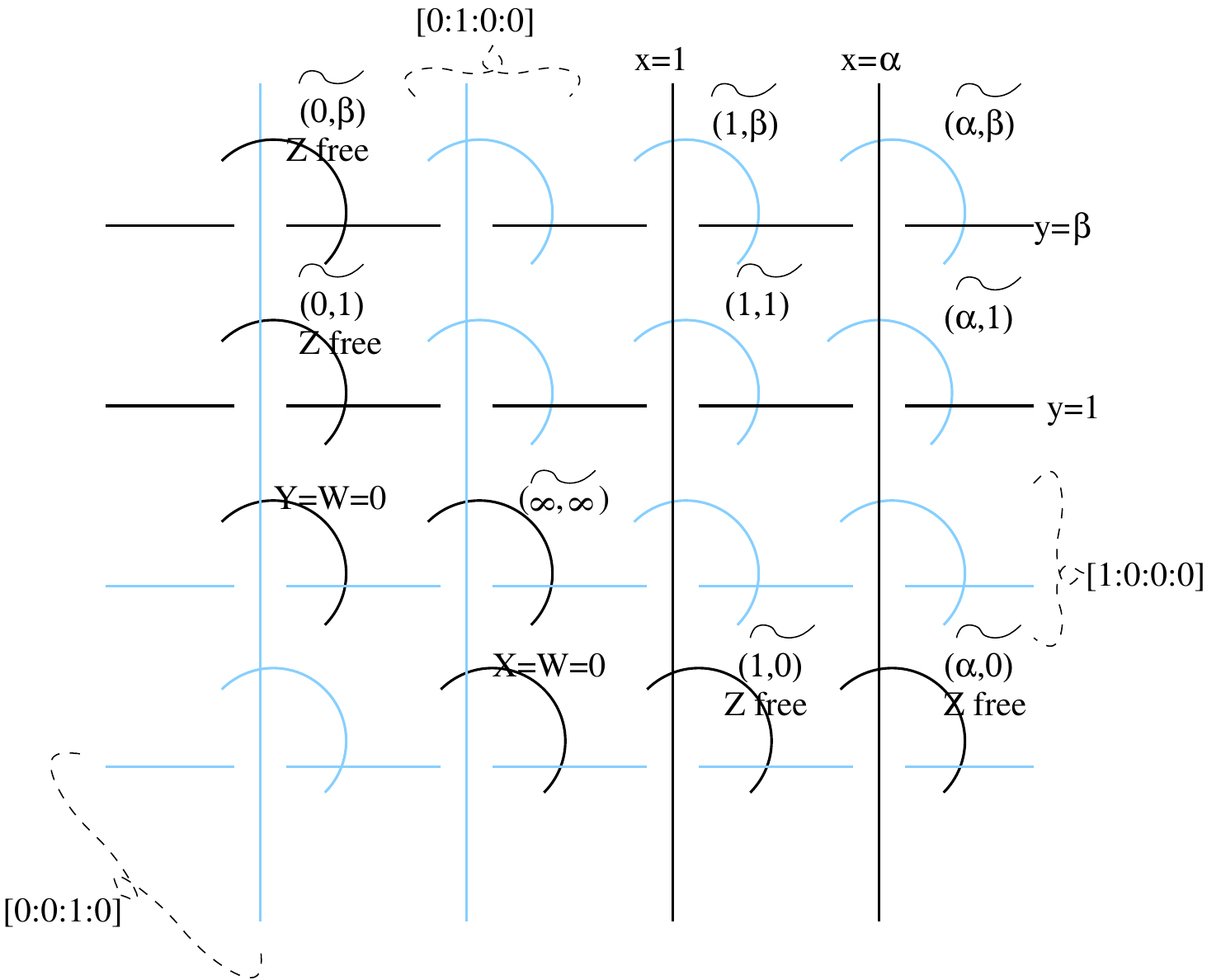}\end{equation} (Here
{}``$\widetilde{(\infty,\infty)}$'' stands for$\{W=0,\, XY=Z^{2}\}$.)
The projective model $\kabc'$ is the blow-down of $\kab$ along the
13 rational curves depicted more faintly. Notice that the configuration
\begin{equation}\includegraphics[scale=0.5]{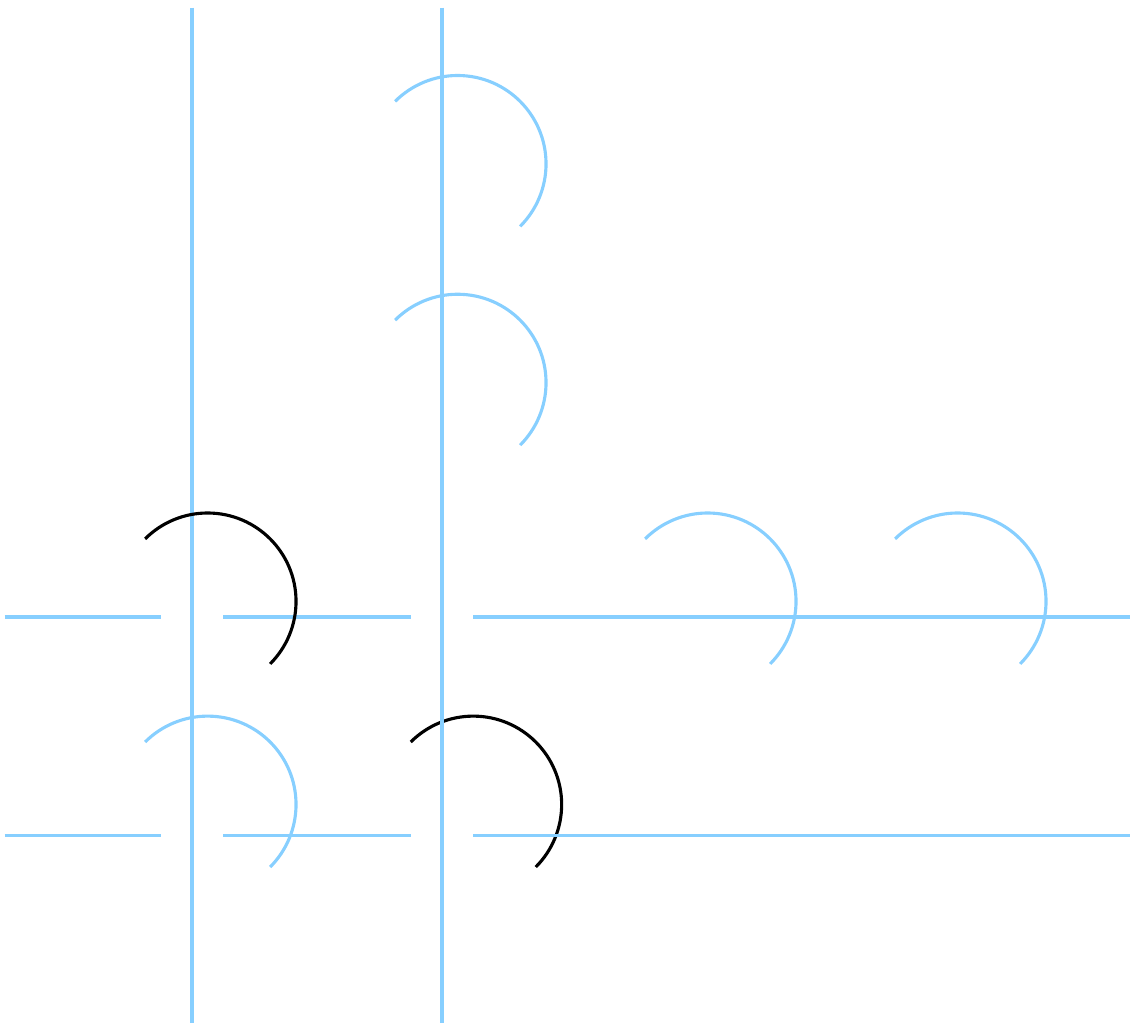}\end{equation}
has Dynkin diagram $D_{10}$, hence Kodaira type $I_{6}^{*}$.

We now describe an elliptic fibration of $\kab$ which shall have:
\begin{itemize}
\item this $I_{6}^{*}$ as its singular fiber at $\infty$;
\item the lines $y=1,\, y=\b,\, x=1,\, x=\a$ as sections;
\item the lines marked $\widetilde{(1,0)},\,\widetilde{(\a,0)},\,\widetilde{(0,1)},\,\widetilde{(0,\b)}$
as bi-sections;
\item the line marked $\widetilde{(\infty,\infty)}$ as a 4-section; and
\item 6 $I_{2}$ singular fibers, 4 of which have one of the lines marked
$\widetilde{(1,\b)}$, $\widetilde{(\a,\b)}$, $\widetilde{(1,1)}$,
or $\widetilde{(\a,1)}$ as one component.
\end{itemize}
Write \begin{equation}R(X,Y,W):=-\frac{X^{2}}{\a}-\frac{Y^{2}}{\b}+\frac{\a+1}{\a}XW+\frac{\b+1}{\b}YW-W^{2}.
\end{equation}Then the fibration, which is really nothing but the pencil $|I_{6}^{*}|$,
is given on the (singular) projective model by \begin{equation}\begin{array}{ccccc}
\kabc' & \longrightarrow & \PP^{1}\\
{}[X:Y:Z:W] & \longmapsto & [R(X,Y,W):XY] & =: & [\mu:1].\end{array}
\end{equation}In either case, the smooth elliptic fibers $\E_{\mu}$ (resp. $\Ec_{\mu}'$)
are double covers of the smooth conic curves \begin{equation}\c_{\mu}:=\left\{ R(X,Y,W)=\mu XY\right\} \subset\PP^{2},
\end{equation}branched over $(x,y)\;=\;\left(1,(1-\mu)\b+1\right),\,\left(\a,(1-\mu\a)\b+1\right),\,\left((1-\mu)\a+1,1\right),$
$\left((1-\mu\b)\a+1,\b\right).$ $\E_{\mu}$ is singular iff one
of the following hold:
\begin{itemize}
\item $\mu=\infty$: then $\E_{\infty}=I_{6}^{*}$;
\item $\mu\in\{1,\frac{1}{\a},\frac{1}{\b},\frac{1}{\a\b}\}$: then two
of the branch points collide, making $\Ec_{\mu}'$ into an $I_{1}$.
$\E_{\mu}$ is then the (Kodaira type $I_{2}$) union of its proper
transform with the exceptional divisor over the collision point --
for example, for $\mu=1$, $\E_{1}=\widetilde{\Ec_{1}'}\cup\widetilde{(1,1)}$;
or
\item $\mu\in\left\{ \frac{\a\b+1}{\a\b},\frac{\a+\b}{\a\b}\right\} $:
then the rational curve $\c_{\mu}$ acquires a node, so $\E_{\mu}$
has two nodes (again of type $I_{2}$).
\end{itemize}
This is all in case $J(E_{\alpha})\neq J(E_{\beta})$, i.e. $\b\notin\left\{ \a,\frac{1}{\a},1-\a,\frac{1}{1-\a},\frac{\a}{\a-1},\frac{\a-1}{\a}\right\} .$
Below we will eventually specialize to the case $\b=\a$, for which
generically $\E_{1}$ is still an $I_{2}$ but $\E_{\frac{1}{\a}=\frac{1}{\b}}$
becomes an $I_{4}$.

\subsection{Normalization of $\widetilde{\Ec_{1}'}$}

We will build our higher Chow cycle on $\E_{1}$. One can see right
away that it must have order-two monodromies about the components
of $(\PP^{1}\times\{0,1,\infty\})\cup(\{0,1,\infty\}\times\PP^{1})$,
since the tangent vectors of the $I_{1}$ fiber $\Ec_{1}'$ at its
singular point $(x,y,z)=(1,1,0)$ are $\left(1,-\frac{\b}{\a},\pm\sqrt{\frac{\b}{\a}(1-\a)(1-\b)}\right)$.
Notice that with $\a=\b$, the branches of the square root become
single-valued hence the monodromy will disappear; this will have consequences
later.

In order to compute, we need to parametrize $\Ec_{1}'$ by a $\PP^{1}$.
The first step is to do this for $\c_{1}$ using stereographic projection.
Putting $x=\Gamma+1$, $y=\xi\Gamma+1$ in its equation \begin{equation}\begin{matrix}
0\;=\;-\frac{x^{2}}{\a}-\frac{y^{2}}{\b}+\frac{\a+1}{\a}x+\frac{\b+1}{\b}y-1-xy \\
\\
=\;\cdots\;=-\left(\frac{1}{\a}+\frac{\xi^{2}}{\b}+\xi\right)\Gamma^{2}-\left(\frac{1}{\a}+\frac{\xi}{\b}\right)\Gamma
\end{matrix}
\end{equation} and solving for $\Gamma$, yields \begin{equation}\left(x(\xi),y(\xi)\right)\,=\,\left(\frac{\a\xi^{2}+\a(\b-1)\xi}{\Dx},\frac{\b(\a-1)\xi+\b}{\Dx}\right),
\end{equation} where $\Dx:=\a\xi^{2}+\a\b\xi+\b$.

The second step is to pull the affine equation of $\kabc'$ back along
$\xi\mapsto(x(\xi),y(\xi))$ and again use an analogue of stereographic
projection: \begin{equation}\begin{matrix}
z^{2}\;=\;\frac{(x-1)(x-\a)(y-1)(y-\b)}{xy}
\\ \\
=\;\cdots\;=\;\frac{(\a\xi+\b)^{2}(\xi+\b)(\a\xi+1)}{(\Dx)^{2}}.
\end{matrix}
\end{equation} So the equation of the $I_{1}$ fiber $\Ec_{1}'$ is \begin{equation}\label{i1eqn}(\Dx)^{2}z^{2}=(\xi+\b)(1+\a\xi)(\b+\a\xi)^{2},
\end{equation} which regarded as a curve in $\PP_{\xi}^{1}\times\PP_{z}^{1}$ has
bidegree $(4,2)$ and three nodes (hence of course genus $0$). A
curve of bidegree $(2,1)$ must meet $\Ec_{1}'$ in $8$ points with
multiplicity; so taking it to pass through the nodes $\left(-\frac{\b}{2}+\sqrt{\frac{\b^{2}}{4}-\frac{\b}{\a}},\infty\right)$,
$\left(-\frac{\b}{2}-\sqrt{\frac{\b^{2}}{4}-\frac{\b}{\a}},\infty\right)$,
$\left(-\frac{\b}{\a},0\right)$ and the smooth point $(-\b,0)$,
it must pass through one more point of $\Ec_{1}'$. Explicitly, these
curves are of the form \begin{equation}\label{curve}\Dx z=(\a\xi+\b)(\xi+\b)\gamma,
\end{equation} where $\gamma\in\C$ is a constant. To find the $\xi$-coordinate
of the residual point we square RHS(\ref{curve}) and set equal to
RHS(\ref{i1eqn}), which yields \begin{equation}\label{xigamma}\xi(\gamma)=\frac{1-\b\gamma^{2}}{\gamma^{2}-\a}.
\end{equation} Thinking of $\PP_{\gamma}^{1}$ as $\widetilde{\Ec_{1}'}$ and $\PP_{\xi}^{1}$
as $\c_{1}$, \eqref{xigamma} gives the branched double cover $\widetilde{\Ec_{1}'}\twoheadrightarrow\Ec_{1}'\twoheadrightarrow\c_{1}$,
where the first map just identifies a pair of points -- namely, those
with $\gamma^{2}=\delta:=\frac{\a\b-\a}{\b-\a\b}$. The following
table illustrates the relationship between functions on $\Ec_{1}'$:\begin{equation}\begin{tabular}{|c|c|c|}
\hline 
$\gamma^{2}$ & $\xi$ & $(x,y)$\tabularnewline
\hline
\hline 
$0$ & $-\nicefrac{1}{\alpha}$ & $(\alpha(1-\beta)+1,\beta)$\tabularnewline
\hline 
$\infty$ & $-\beta$ & $(\alpha,\beta(1-\alpha)+1)$\tabularnewline
\hline 
$\delta$ & $-\nicefrac{\beta}{\alpha}$ & $(1,1)$\tabularnewline
\hline 
$\nicefrac{1}{\beta}$ & $0$ & $(0,1)$\tabularnewline
\hline 
$\alpha$ & $\infty$ & $(1,0)$\tabularnewline
\hline 
$-\alpha\beta+\alpha+1$ & $1-\beta$ & $(0,\beta)$\tabularnewline
\hline 
$\frac{1}{1+\beta-\alpha\beta}$ & $\frac{1}{1-\alpha}$ & $(\alpha,0)$\tabularnewline
\hline 
roots of $\Delta(\xi(\gamma^{2}))$ & roots of $\Delta(\xi)$ & $(\infty,\infty)$\tabularnewline
\hline
\end{tabular}\end{equation} The rows starting with $0$ and $\infty$ correspond
to the branch points of $\Ec_{1}'\to\c_{1}$.

The third and last step is to find a coordinate $\z$ on $\widetilde{\Ec_{1}'}(\cong\PP^{1})$
which is $0$ and $\infty$ (rather than $\pm\sqrt{\delta}$) at the
two points mapping to the node of $\Ec_{1}'$, and $\pm1$ at the
two branch points of $\widetilde{\Ec_{1}'}\to\c_{1}$. This is given
by \begin{equation}\z=\frac{\gamma+\sqrt{\delta}}{\gamma-\sqrt{\delta}}\;\;\longleftrightarrow\;\;\gamma=\sqrt{\delta}\frac{\z+1}{\z-1}.
\end{equation} Our higher Chow cycle in $CH^{2}(\kab,1)$ will then simply be \begin{equation}\Zr_{\a,\b}:=\left(\widetilde{\Ec_{1}'},\z\right)+\left(\widetilde{(1,1)},g\right),
\end{equation} where $g$ has zero and pole cancelling with those of $\z$. (Note
that while $\z$ is the {}``preferred'' cordinate on the $\PP^{1}$,
we will work mainly in $\gamma$ below since this simplifies computations.)
We remark that $\Zr_{\a,\b}$ is defined as long as $\a,\b\notin\{0,1,\infty\}$
and $1\notin\left\{ \frac{1}{\a},\frac{1}{\b},\frac{1}{\a\b},\frac{\a\b+1}{\a\b},\frac{\a+\b}{\a\b}\right\} $,
but not quite well-defined: there is the issue of sign in $\z^{\pm1}$
(or equivalently, $\pm\sqrt{\delta}$) which leads to the predicted
order-2 monodromies.

\subsection{The $(1,1)$ current}

On $E_{\a}\times E_{\b}$ there is the closed, real-analytic $(1,1)$-form
\begin{equation}\label{omega}\omega=\frac{dx}{u}\wedge\overline{\left(\frac{dy}{v}\right)}=\frac{dx}{\sqrt{x(x-1)(x-\a)}}\wedge\overline{\left(\frac{dy}{\sqrt{y(y-1)(y-\b)}}\right)},
\end{equation} and $\w+\bar{\w}$, $i(\w-\bar{\w})$ obviously span $H_{tr,\R}^{1,1}$.
Clearly $\w$ is invariant under $\jmath_{\a}\times\jmath_{\b}$,
hence is the pullback of a $(1,1)$-current on $\kabc$, whose pullback\footnote{technically
these observations should be expressed in terms of push-forwards,
but the computations are better done as formal pullbacks.} $\w_{\k}$
to $\kab$ has integrable singularities along
the exceptional divisors: if locally the equation of one looks like
$w=0$, then there is a term of the form $\frac{dw\wedge d\bar{w}}{|w|}$.
Now we could argue that this current $\w_{\k}$ is closed and represents
a class in $H_{tr}^{1,1}(\kab,\C)$; but this approach runs into trouble
because $\widetilde{(1,1)}$, where part of the cycle is supported,
is an exceptional divisor. (The current's singularity along this divisor
makes the pairing {}``improper'', even though it {}``formally pulls
back'' to zero there.) Therefore, we will simply carry out an \emph{ad
hoc} pairing between\footnote{pairing the regulator with
$\w_{\k}+\overline{\w_{\k}}$ and $i(\w_{\k}-\overline{\w_{\k}})$
to get two real numbers, is equivalent to pairing it with $\w_{\k}$
to get a single complex number.
} $r_{2,1}(\Zr_{\a,\b})$ and $\w_{\k}$ on $\widetilde{\Ec_{1}'}$,
then interpret it on $E_{\a}\times E_{\b}$ where $\w$ is smooth.

So taking $\imath_{1}$ to denote the inclusion $\widetilde{\Ec_{1}'}\hookrightarrow\kab$,
we must compute $\imath_{1}^{*}\w_{\k}$. This is done by {}``formally''
pulling back the above form \eqref{omega} under $\xi\mapsto(x(\xi),y(\xi))$:
after some calculation, we obtain \begin{equation}\frac{-(\a\xi^{2}+2\b\xi+\b(\b-1))\overline{(\a(\a-1)\xi^{2}+2\a\xi+\b)}\, d\xi\wedge d\bar{\xi}}{|\Dx||\xi||\a\xi+\b||\xi+(\b-1)||(\a-1)\xi+1|\sqrt{(\xi+\b)\overline{(\a\xi+1)}}},
\end{equation} a sort of multivalued form on $\c_{1}$. Pulling this back (again
{}``formally'') to $\widetilde{\Ec_{1}'}\cong\PP_{\gamma}^{1}$
via $\gamma\mapsto\xi(\gamma)$ then yields (with apologies to the
reader) $\imath_{1}^{*}\w_{\k}=$ \begin{equation}\begin{matrix}
\frac{-4|\a\b-1|}{|\b||1-\a|}\cdot\frac{\left\{ (\a\b^{2}-\b^{2}-\b)\gamma^{4}+2\b\gamma^{2}+(\a^{2}\b^{2}-\a^{2}\b+\a-2\a\b)\right\} \,\gamma d\gamma}{|\gamma^{2}-\a||1-\b\gamma^{2}||\gamma^{2}-\delta||\gamma^{2}-(1+\a-\a\b)|}\wedge
\\ \\
\frac{\overline{\left\{ (\a^{2}\b^{2}-\a\b^{2}+\b-2\a\b)\gamma^{4}+2\a\gamma^{2}+(\a^{2}\b-\a^{2}-\a)\right\} }\, d\bar{\gamma}}{|(1+\b-\a\b)\gamma^{2}-1||\b\gamma^{4}+(\a^{2}\b^{2}-3\a\b)\gamma^{2}+\a|}.
\end{matrix}
\end{equation} While complicated, the 14 poles of this $(1,1)$ current are all
of the integrable form mentioned above, and their locations are precisely
the points where $\Ec_{1}'$ hits the exceptional divisors: $\widetilde{(1,1)}$,
$\widetilde{(1,0)}$, $\widetilde{(\a,0)}$, $\widetilde{(0,1)}$,
$\widetilde{(0,\b)}$ twice each; $\widetilde{(\infty,\infty)}$ four
times.

Along the locus $\a=\b$, this form simplifies a little: $\imath_{1}^{*}\w_{\k}=$
\begin{equation}\label{pullback}\begin{matrix}
-4|\a+1|\cdot\frac{\left\{ (\a^{2}-\a-1)\gamma^{4}+2\gamma^{2}+(\a^{3}-\a^{2}-2\a+1)\right\} \,\gamma d\gamma}{|\gamma^{2}-\a||1-\a\gamma^{2}||\gamma^{2}+1||\gamma^{2}-(1+\a-\a^{2})|}\wedge
\\ \\
\frac{\overline{\left\{ (\a^{3}-\a^{2}-2\a+1)\gamma^{4}+2\gamma^{2}+(\a^{2}-\a-1)\right\} }\, d\bar{\gamma}}{|(1+\a-\a^{2})\gamma^{2}-1||\gamma^{4}+(\a^{3}-3\a)\gamma^{2}+1|}.
\end{matrix}
\end{equation}

\subsection{The pairing}

The next step is simply to integrate $\log|\z|$ against $\imath_{1}^{*}\w_{\k}$
on $\widetilde{\Ec_{1}'}$. As $\log|\z|=\log\left|\frac{\gamma+\sqrt{\delta}}{\gamma-\sqrt{\delta}}\right|$,
this integral will have a multivalued behavior as indicated above.
It is singular but absolutely convergent: the worst behavior is at
$\gamma=\pm\sqrt{\delta}$ where it locally takes the form $\int_{D_{\epsilon}}\frac{\log|z|}{|z|}dz\wedge d\bar{z}$,
which is equivalent to $\int_{0}^{\epsilon}(\log r)dr$.

But setting $\a=\b$ ($\implies\delta=-1$) kills this monodromy,
allowing for a well-defined choice of $\Zr_{\a,\a}\in CH^{2}(\xab,1)$
over $\PP^{1}\backslash\{0,1,\infty,-1,2\}$ (see the end of $\S6.2$).
On a smooth compactification of the total space $\mathcal{X}\overset{\rho}{\to}\PP_{\a}^{1}$,
the {}``total cycle'' is easily seen to have residues (i.e. $\log|\z|$
blows up) along $\x_{-1,-1}\cup\x_{2,2}$ \emph{only} (cf. the proof of Theorem 3.7 in \cite{Ke}). By the localization sequence
for higher Chow groups, it can in fact be extended to all of $\rho^{-1}(\PP^{1}\backslash\{-1,2\})$.
Most importantly, eliminating the monodromy makes the integrals \begin{equation}\label{psi}\psi(\a)=\int_{\PP^{1}}\log\left|\frac{\gamma+i}{\gamma-i}\right|\Re(\imath_{1}^{*}\w_{\k})\;,\;\;\;\eta(\a)=\int_{\PP^{1}}\log\left|\frac{\gamma+i}{\gamma-i}\right|\Im(\imath_{1}^{*}\w_{\k})
\end{equation} \emph{real-analytic functions} of $\a\in\PP^{1}\backslash\{0,1,\infty,-1,2\}$.

Now on $E_{\a}\times E_{\a}$, by considering pullbacks to the diagonal,
one sees immediately that $i(\w-\bar{\w})$ is the algebraic class
whilst $\w+\bar{\w}$ is the transcendental one. Clearly the same
story holds on $\k_{\a,\a}$. So to check generic indecomposability
of $\Zr_{\a,\a}$ we need to demonstrate that $\psi(\a)$ (rather
than $\eta(\a)$) is generically nonzero.\footnote{In fact, a simple change of
coordinates to $\tilde{z}=\frac{1}{z}$ shows that $\eta(\alpha)$ is identically
zero.} Clearly it will suffice to show that $\lim_{\a\to1}\psi(\a)\neq0$.

Setting $\a=1$ in \eqref{pullback} yields \begin{equation}\begin{array}{ccccc}
\imath_{1}^{*}\w_{\k} & = & \frac{-8|\gamma^{2}-1|^{4}\,\gamma d\gamma\wedge d\bar{\gamma}}{|\gamma^{2}-1|^{6}|\gamma^{2}+1|}\\
 & = & \frac{-8\gamma d\gamma\wedge d\bar{\gamma}}{|\gamma^{2}-1|^2|\gamma^{2}+1|} & = & \frac{16r\{i\cos\theta-\sin\theta\}dx\wedge dy}{|\gamma^{2}-1|^2|\gamma^{2}+1|},\end{array}
\end{equation}  where $\gamma=x+iy=re^{i\theta}$. Because of the cancellations in
the second step, it requires some analysis to prove that $\int_{\PP^{1}}\log|\z|\Re(\imath_{1}^{*}\w_{\k})$
\emph{at} $\a=1$ actually computes the limit of $\psi$. This is
done in the appendix to this section, and so we have\inputencoding{latin1}{
}\inputencoding{latin9}\begin{equation}\label{limit}-\frac{1}{16}\lim_{\a\to1}\psi(\a)=\int_{\PP^{1}}\frac{\log\left|\frac{\gamma+i}{\gamma-i}\right|r\sin\theta}{|\gamma^{2}-1|^2|\gamma^{2}+1|}dx\wedge dy.
\end{equation} Now simply notice that
\begin{itemize}
\item the integral over $\PP^{1}$ in \eqref{limit} is double that over
the upper half plane, since $\log\left|\frac{\gamma+i}{\gamma-i}\right|$
and $\sin\theta$ are both odd in $\gamma$; and
\item the integrand is (where nonsingular) strictly positive on the upper
half plane.
\end{itemize}
We conclude that \eqref{limit} is a positive real number, finishing
this part of the argument.
\begin{rem}
{\rm It is more natural to normalize $\w_K$, and hence
$\psi$, by dividing out by $\big|\int_{E_{\alpha}}\frac{dx}{y}
\wedge \ol{\big(\frac{dx}{y}\big)}\big|$.
One can show -- either using formula \eqref{psi} or from general
principles to be explained in {\cite{Ke}} -- that this modified $\psi$ is asymptotic
to a constant times $\log|\a+1|$ (resp. $\log|\a-2|$) as $\a\to-1$ (resp. $2$), and goes to zero as $\a\to0,1,\infty$.
The first approach is indicated in the appendix.}
\end{rem}

\subsection{Interpretation of the integrals }

From the generic nontriviality of $\psi(\a)$, we know that \begin{equation}\label{integral}\int_{\widetilde{\Ec_{1}'}}(\log|\z|)\imath_{1}^{*}\w_{\k}
\end{equation}  is nonzero for generic $\a,\b$. We will show that this integral
has meaning as an invariant of $\Zr_{\a,\b}$ in roundabout fashion,
by first exhibiting it as an invariant of a related cycle on $E_{\a}\times E_{\b}$.

For generic $\mu$, the image $\Ec_{\mu}$ of $\widetilde{\Ec_{\mu}'}$
in $\kabc$ is a curve with intersection numbers as follows: \begin{equation}\includegraphics[scale=0.5]{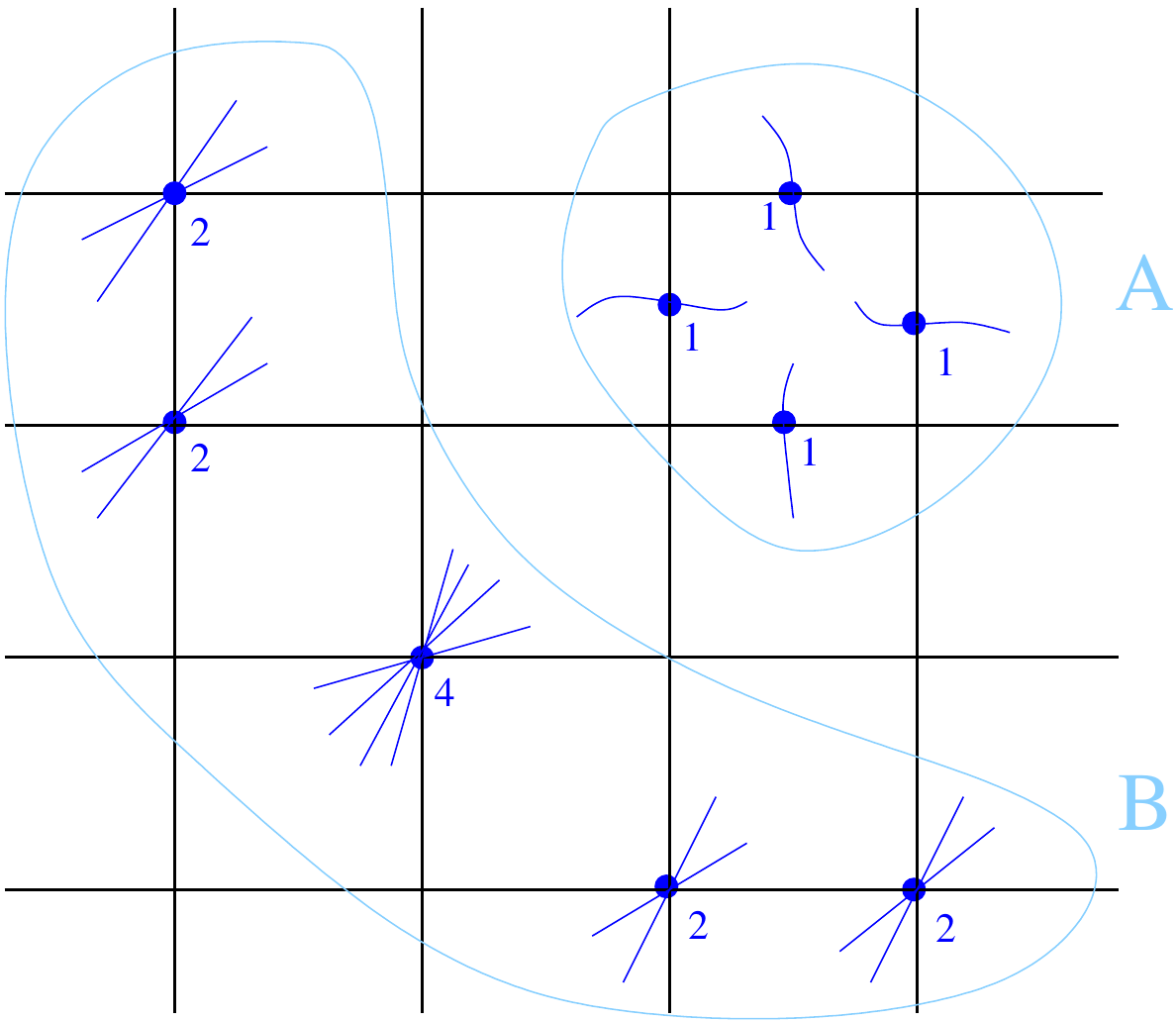}\end{equation}
where the horizontal and vertical lines have the same meaning as in
the earlier picture \eqref{pic}. Obviously its normalization is elliptic,
with 4 smooth branch points over the conic $\c_{\mu}$ at the points
of type $(\mathbf{A})$. Its preimage $\mathcal{D}_{\mu}$ in $E_{\a}\times E_{\b}$
is an irreducible curve with singularities at the points of type $(\mathbf{B})$;
and its normalization can be thought of as a double cover of the normalization
of $\Ec_{\mu}$, branched at the points lying over these singularities.
An easy Riemann-Hurwitz calculation shows that $\widetilde{\mathcal{D}_{\mu}}$
has genus $7$.

As $\mu\to1$, $\mathcal{D}_{\mu}$ and $\Ec_{\mu}$ each acquire
a new node, one mapping to the other: $\mathcal{O}\mapsto(1,1)$.
The local description (at the nodes) of the map $\mathcal{D}_{1}\twoheadrightarrow\Ec_{1}$
is {}``$z\mapsto z^{2}$'' on each branch separately. (Note that
$\widetilde{\mathcal{D}_{1}}$ has genus 6.) Therefore, the pullback
$\zt\in\C(\widetilde{\mathcal{D}_{1}})^{*}$ of the function $\z$
on $\widetilde{\Ec_{1}}$ pushes forward to $\mathcal{D}_{1}$ to
yield a $K_{1}$-class: its double-zero and double-pole cancel at
$\mathcal{O}$. Further, the real regulator current $\log|\zt|\delta_{\mathcal{D}_{1}}$
pairs against $\w\in\Gamma(E_{\a}\times E_{\b},{A}^{1,1})_{d-{\rm closed}}$ from \eqref{omega} to yield
\begin{lyxlist}{00.00.0000}
\item [{(a)}] an honest invariant of this $K_{1}$-class; and
\item [{(b)}] twice the value of the integral \eqref{integral}, since
$\w$ and $\zt$ are both invariant under the involution flipping
$\widetilde{\mathcal{D}_{1}}$ over $\widetilde{\Ec_{1}}$.
\end{lyxlist}
Consider the diagram \begin{equation}  \xymatrix{ & {\tilde{\k}_{\a,\b}} \ar @{->>} [ld]_{\pi_2} \ar @{->>} [rd]^{\pi_1} & & \tilde{\k}_{\a,\b}' \ar @{->>} [ld]_{\pi_1'} \ar @{->>} [rd]^{\pi_2'} \\ E_{\a} \times E_{\b} \ar @{->>} [rd]^{2:1} & & \kab \ar @{->>} [ld] \ar @{->>} [rd] \ar @{->>} [d] & & \xab \ar @{->>} [ld]^{2:1} \\ & \kabc & \kabc' & \kabc'' } 
\end{equation}in which $\xab$ is the Shioda-Inose $K3$, $\kabc''$ its quotient
by the Nikulin involution, and the relationship between the two sets
of parameters is given by \begin{equation}J(E_{\a})+J(E_{\b})=a^{3}-b^{2}+1\;,\;\;\; J(E_{\a})\cdot J(E_{\b})=a^{3}.
\end{equation}  The preimage of $\mathcal{D}_{1}$ under $\pi_{2}$ consists of $\widetilde{\mathcal{D}_{1}}$
and $\mathcal{W}$ (an exceptional $\PP^{1}$ with coordinate {}``$w$'')
meeting at $w=0$ and $w=\infty$ on $\mathcal{W}$. The map $\pi_{1}$
pushes this down to $\E_{1}=\widetilde{\Ec_{1}'}\cup\widetilde{(1,1)}$,
where the map from $\mathcal{W}$ to $\widetilde{(1,1)}$ is given
by $w\mapsto w^{2}$. Setting \begin{equation}\tilde{\Zr}_{\a,\b}:=(\widetilde{\mathcal{D}_{1}},\zt)+(\mathcal{W},w^{2})\in CH^{2}(\tilde{\k}_{\a,\b},1),
\end{equation}  we have $\pi_{1,*}(\tilde{\Zr}_{\a,\b})=2\Zr_{\a,\b}$ and $\pi_{2,*}(\tilde{\Zr}_{\a,\b})=\pi_{2,*}(\widetilde{\mathcal{D}_{1}},\zt)$.
By (a), (b), and functoriality of $r_{2,1}$, it now follows that
the pairing \eqref{integral} indeed computes the regulator of $\Zr_{\a,\b}$.

What about cycles on $\xab$? The 2:1 birational correspondence provided
by $\pi_{1}'$ and $\pi_{2}'$ identify its alternate fibration with
the elliptic fibration of $\kab$ (generically in 2:1 \'etale fashion).
More precisely, we have a diagram\inputencoding{latin1}{ }\inputencoding{latin9}\begin{equation}
\xymatrix{
\kab \ar [d] & \xab \ar @{-->} [l]_{2:1} \ar [d]
\\
\PP^1_{\mu} & \PP^1_{\theta} \ar [l]
} 
\end{equation} where the bottom map is of the form $\theta\mapsto q\theta+p$ (with
$p$ and $q$ constants dependent on $\a,\b$). On $\tilde{\k}_{\a,\b}'$
there is a $K_{1}$-cycle $\tilde{\Zr}_{\a,\b}'$ supported on an
$I_{2}$, $\pi_{1,*}'$ of which is $2\Zr_{\a,\b}$, and a similar
analysis goes through, proving indecomposability of the $I_{1}$-supported
$\mathcal{Z}_{a,b}:=\pi_{2,*}'(\tilde{\Zr}_{\a,\b}')$. So our integral
\eqref{integral} computes the real regulator of a trio of explicit
cycles, on $E_{\a}\times E_{\b}$, $\kab$, and $\xab$.

\subsection*{Appendix to Section 6}

Here we perform the analytic estimate which establishes the limiting
assertion in $\S6.4$, for $\a\to1$. It will suffice to consider
the behavior of the integral in a fixed neighborhood of one of the
points (we use $\gamma=+1$) where zeroes and poles collide. Write
$\chi=\a-1$, $\gamma^{2}=\zeta+1$, and let $D_{r}(c)$ denote the
open disk about $c$ of radius $r$. 

We may leave out the polynomial factors with no zero or pole approaching
$\zeta=0$, and approximate the locations of zeroes and poles to the
lowest order required to distinguish them. The problem is then to
show that \begin{equation}\label{est} \int_{|\zeta|<\frac{1}{2}}\frac{(\zeta-3\chi)\overline{(\zeta+3\chi)}(\zeta+\chi)\overline{(\zeta-\chi)}\log|\z|d\zeta\wedge d\bar{\zeta}}{|\zeta-(\chi+\chi^{2})||\zeta-(\chi-\chi^{2})||\zeta+(\chi+\chi^{2})||\zeta+(\chi-\chi^{2})||\zeta-i\sqrt{3}\chi||\zeta+i\sqrt{3}\chi|}
\end{equation}  limits to \begin{equation}\int_{|\zeta|<\frac{1}{2}}\log|\z|\frac{d\zeta\wedge d\bar{\zeta}}{|\zeta|^{2}}
\end{equation} as $\chi\to0^{+}$ along the real axis. Given $\epsilon>0$, and taking
$0<\chi<\epsilon/3$, it is obvious that the integrand in \eqref{est}
converges uniformly on $\epsilon<|\zeta|<1/2$. We claim that the
remaining part $\int_{|\zeta|<\epsilon}$ of the integral, independently
of $\chi\in(0,\frac{\epsilon}{3})$, is bounded by $1000\pi\epsilon$.
This will prove the desired convergence.

To verify the claim, we first remark that $\log|\z|$ is zero for
all $\zeta\in\PP^{1}(\R)$; in fact, we shall just use that $|\log|\z||<|\zeta|$.
Next, note that on the complement in $D_{\epsilon}(0)$ of the four
disks $D_{\frac{\chi}{2}}(\chi)$, $D_{\frac{\chi}{2}}(-\chi)$, $D_{\frac{\chi}{2}}(i\sqrt{3}\chi)$,
$D_{\frac{\chi}{2}}(-i\sqrt{3}\chi)$, \begin{equation}\frac{|\zeta+3\chi||\zeta+\chi|}{|\zeta+\chi+\chi^{2}||\zeta+\chi-\chi^{2}|}=\frac{|\lambda+2\chi||\lambda|}{|\lambda+\chi^{2}||\lambda-\chi^{2}|}=\frac{|1+\frac{2\chi}{\lambda}|}{|1+\frac{\chi^{2}}{\lambda}||1-\frac{\chi^{2}}{\lambda}|}
\end{equation} (where $\lambda:=\zeta+\chi$) is bounded by $6$, since $|\frac{2\chi}{\lambda}|\leq4$,
$|\frac{\chi^{2}}{\lambda}|\leq2\chi$ and we are assuming $\chi$
is small. The same is true for $\frac{|\zeta-3\chi||\zeta-\chi|}{|\zeta-\chi+\chi^{2}||\zeta-\chi-\chi^{2}|}$;
and similarly, $\frac{|\zeta|^{2}}{|\zeta-i\sqrt{3}\chi||\zeta+i\sqrt{3}\chi|}$
is bounded by $9$. So the integral over $D_{\epsilon}(0)\backslash\{\text{4 disks}\}$
is bounded by \begin{equation}\int_{|\zeta|<\epsilon}9\cdot6^{2}\cdot\frac{|d\zeta\wedge d\bar{\zeta}|}{|\zeta|}=324\cdot2\pi\int_{0}^{\epsilon}\frac{rdr}{r}<650\pi\epsilon.
\end{equation} Now consider (say) the right half of $D_{\frac{\chi}{2}}(-\chi)$:
here the absolute value of the integrand, apart from the $\frac{1}{|\lambda-\chi^{2}|}$,
is \begin{equation}\frac{|\lambda-4\chi||\lambda-2\chi|}{|\lambda-(2\chi+\chi^{2})||\lambda-(2\chi-\chi^{2})|}\cdot\frac{|\lambda|}{|\lambda+\chi^{2}|}\cdot\frac{|\lambda+2\chi||\lambda-\chi|}{|\lambda-i\sqrt{3}\chi||\lambda+i\sqrt{3}\chi|}\leq6\cdot1\cdot\frac{10}{3}\leq20.
\end{equation}  We have then \begin{equation}20\int_{D_{\frac{\chi}{2}}(0)\cap\Re(\lambda)>0}\frac{|d\lambda\wedge d\bar{\lambda}|}{|\lambda-\chi^{2}|}\leq20\int_{D_{\chi}(0)}\frac{|d\lambda\wedge d\bar{\lambda}|}{|\lambda|}=40\pi\chi<\frac{40}{3}\pi\epsilon,
\end{equation}  together with similar estimates on 3 other half-disks. The estimates
for $D_{\frac{\chi}{2}}(\pm i\sqrt{3}\chi)$ are each $\frac{250}{3}\pi\epsilon$.
Adding everything from inside and outside the 4 disks, we are safely
under $1000\pi\epsilon$.

We briefly address the situation at the other 4 points where poles in \eqref{pullback}
collide. The most striking case is that of $\a\to 2$. Substituting $\a=2$ in
$\int_{\PP^1}\log|\mathfrak{z}|\Re(\imath^*\w_{\k})$ yields the convergent integral
$$ -24\int_{\PP^1}\frac{\log\left|\frac{\gamma+i}{\gamma-i}\right| r \sin(\theta)}{|\gamma^2+1||\gamma^2-2||2\gamma^2-1|}dx\wedge dy. $$ Writing $\chi=\a-2$, $\gamma^2=\zeta-1$, to show this is $\lim_{\a\to2} \psi(\a)$
one must check (in analogy to \eqref{est}ff) that \begin{equation}\label{estat2} \int_{|\zeta|<\frac{1}{2}}
\frac{|\zeta+3i\sqrt{\chi}|^2|\zeta-3i\sqrt{\chi}|^2\log|\zeta|d\zeta\wedge d\bar{\zeta}}
{|\zeta||\zeta+3\chi||\zeta-3\chi||\zeta-3\sqrt{\chi}||\zeta+3\sqrt{\chi}|} \end{equation}
limits to $$ \int_{|\zeta|<\frac{1}{2}}\log|\zeta|\frac{d\zeta\wedge d\bar{\zeta}}{|\zeta|} $$
as $\chi\to 0$.
But this \emph{fails}, due to the rapid convergence to ($\zeta=$)$0$ of two of the poles; in fact,
\eqref{estat2} diverges logarithmically.

For $\a \to -1$, the limiting of the factor $|\a+1|\to 0$ in \eqref{pullback} is no match for the 
convergence of $7$ poles each to ($\gamma=$)$i$ and $-i$, again resulting in a logarithmic divergency for $\psi(\a)$. 
On the other hand, analyses
similar to (but simpler than) that for $\a\to1$ show $\lim_{\a\to 0}\psi(\a)$ and $\lim_{\a\to \infty}\psi(\a)$
to be convergent.

\section{The transcendental regulator for a Picard rank 20 $K3$}\label{SECTrReg}

Here we specialize to the case (cf. $\S6.5$) \begin{equation}\a=\frac{1}{2}=\b\;,\;\;\; a=1,\, b=0,
\end{equation} in which case $E_{\a},E_{\b}\cong\C/\Z\left\langle 1,i\right\rangle $
are CM and $p=3,\, q=-2$ (cf. {\cite{C-D2}}). The singular fibres are
at $\theta=$$\pm\frac{1}{2}$ (type $I_{2}$) and $\pm1$ (type $I_{1}$)
in $\x:=\x_{1,0}$, and at $\mu=$$2,4$ (type $I_{4}$) and $1,5$
(type $I_{2}$) in $\k_{\frac{1}{2},\frac{1}{2}}$. Recalling that
our original cycle was supported over $\mu=1$, which in this specialization
has \emph{remained} an $I_{2}$ fiber (hence preserving the cycle),
its transform $\mathcal{Z}:=\mathcal{Z}_{1,0}$ is supported over
$\theta=1$ in $\x$.

To take a closer look at the fibration structure of $\x$, we use
its affine equation \begin{equation}2y^{2}=w\underset{=:Q_{\theta}(w)}{\underbrace{(w^{2}+2\{4\theta^{3}-3\theta\}w+1)}}
\end{equation} to sketch the families of branch points of the elliptic fibers: \begin{equation}\includegraphics[scale=0.5]{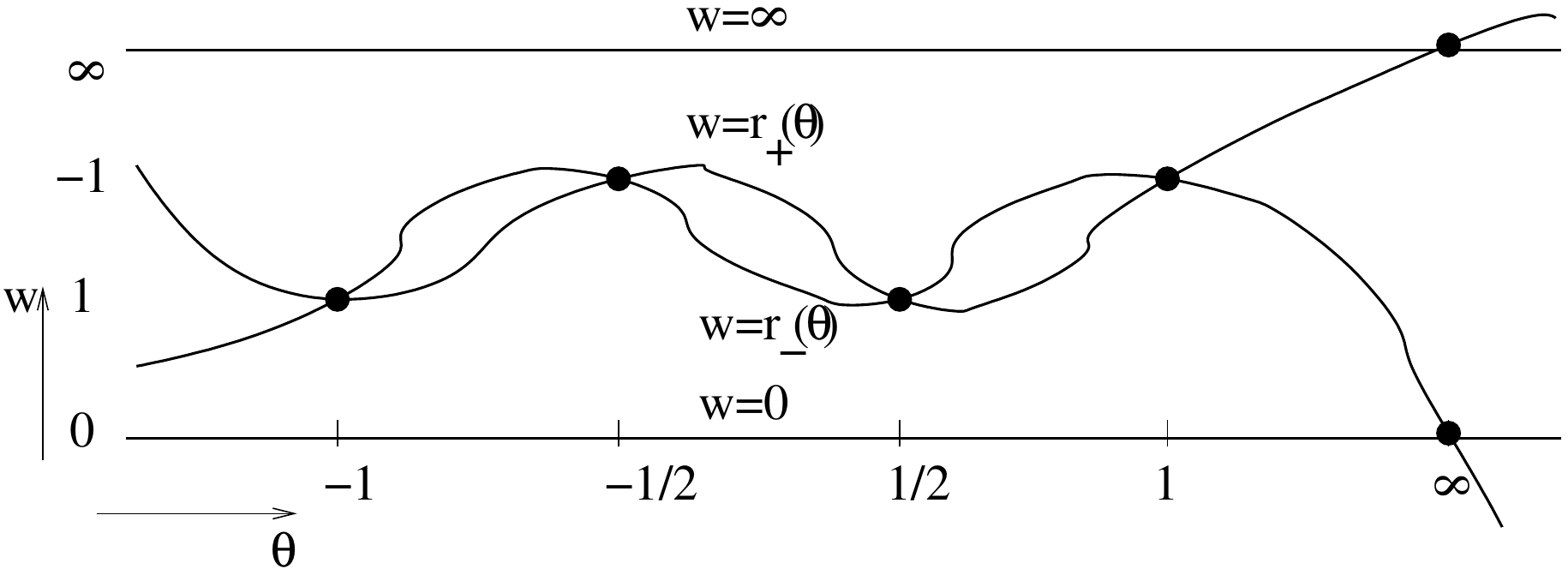}\end{equation}
Here $r_{\pm}(\theta)$ are the roots of $Q_{\theta}(w)$, which are
both negative real for $\theta\in[1,\infty)$, with $r_{-}=r_{+}^{-1}$.
For purposes of constructing transcendental cycles, one should imagine
all the branch points coealescing at $\theta=\infty$ since that fiber,
an $I_{12}^{*}$, has trivial $H_{1}$.

In particular, considering the fiber over $\theta=1$, the membrane
$\Gamma$ we use for the transcendental regulator computation must
bound on the indicated cycle $\del\Gamma=T_{\mathcal{Z}}$: \begin{equation}\includegraphics[scale=0.5]{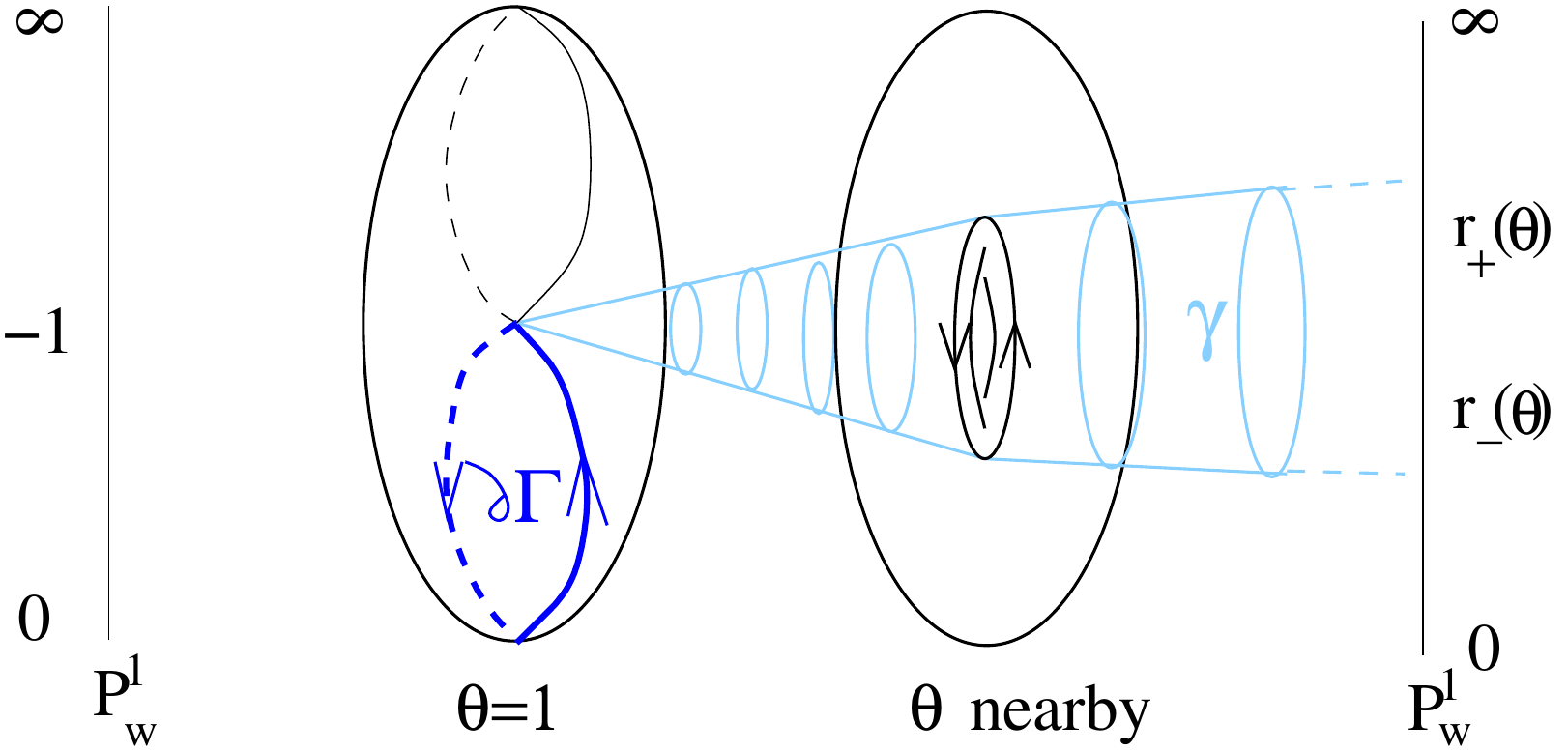}\end{equation}
which is a double cover of the path $[-1,0]\subset\PP_{w}^{1}$. The
transcendental 2-cycle $\gamma$ is the family of double covers of
$[r_{-},r_{+}]$ as $\theta$ goes from $1$ to $\infty$.

By basic residue theory the holomorphic $(2,0)$ form on $\x$ is
given by \begin{equation}\w_{0}=\frac{dw\wedge d\theta}{y}
\end{equation}in the affine coordinates. If \begin{equation}\int_{\gamma}\w_{0}=2\sqrt{2}\int_{\theta=1}^{\infty}\left(\int_{r_{-}(\theta)}^{r_{+}(\theta)}\frac{dw}{\sqrt{wQ_{\theta}(w)}}\right)d\theta\;(\,>0\,)
\end{equation} is one transcendental period, then using the automorphism $j:\x\to\x$
given by $(w,y,\theta)\mapsto(-w,-iy,-\theta)$, we have \begin{equation}\int_{j(\gamma)}\w_{0}=\int_{\gamma}j^{*}\w_{0}=i\int_{\gamma}\w_{0}.
\end{equation}  Normalizing $\w_{0}$ to $\w:=\frac{\w_{0}}{\int_{\gamma}i\w_{0}}$,
we find that $\Phi_{2,1}$ is described by \begin{equation}\begin{array}{ccc}
CH^{2}(\x,1) & \longrightarrow & \C/\Z[i]\\
\mathcal{Z} & \longmapsto & \int_{\Gamma}\w\end{array},
\end{equation}which for our particular cycle is \begin{equation}\label{kappa}\begin{matrix}
\kappa:=\int_{\Gamma}\w=2\int_{\theta=1}^{\infty}\int_{w=r_{+}(\theta)}^{0}\w
\\ \\
=\frac{\int_{1}^{\infty}\int_{r_{+}(\theta)}^{0}\frac{dw}{\sqrt{-wQ_{\theta}(w)}}d\theta}{\int_{1}^{\infty}\int_{r_{-}(\theta)}^{r_{+}(\theta)}\frac{dw}{\sqrt{wQ_{\theta}(w)}}d\theta}\in\R_{+}.
\end{matrix}
\end{equation} That is, \emph{the nontriviality of $\Phi_{2,1}(\mathcal{Z})_{\mathbb{Q}}$
is equivalent to irrationality of $\kappa$}. 

The situation is highly reminiscent of a computation by Harris {\cite{Ha}}
of the Abel-Jacobi map for the Ceresa cycle of the Fermat quartic
curve. In that case, a computer computation suggested that the comparable
invariant $\kappa'\in\R/\mathbb{Q}$ was nontrivial. This would have
implied that the cycle was nontorsion modulo rational equivalence,
a fact later proved by Bloch {\cite{B2}} using his $\ell$-adic $AJ$
map. Since the Fermat Jacobian is defined over $\bar{\mathbb{Q}}$,
the Bloch-Beilinson conjecture predicts injectivity of the usual $AJ$
map, and hence the irrationality of $\kappa'$. One might, in conclusion,
speculate that a similar story unfolds here.

\end{document}